\newcommand{\rd}{\,\mathrm{d}}
\numberwithin{equation}{section}
\newtheorem{theorem}{Theorem}[section]
\newtheorem{lemma}[theorem]{Lemma}
\newtheorem{corollary}[theorem]{Corollary}
\newtheorem{proposition}[theorem]{Proposition}
\newtheorem{definition}[theorem]{Definition}
\newtheorem{remark}[theorem]{Remark}
\def\bx{{\bf x}}
\def\by{{\bf y}}
\def\bz{{\bf z}}
\def\cM{\mathcal{M}}
\def\cF{\mathcal{F}}
\def\cD{\mathcal{D}}
\def\cS{\mathcal{S}}
\def\cH{\mathcal{H}}
\def\supp{\textnormal{supp\,}}
\def\diam{\textnormal{diam\,}}
\def\dist{\textnormal{dist\,}}
\def\essinf{\textnormal{essinf\,}}
\begin{document}
	
\title{Existence of minimizers for interaction energies with external potentials}

\author{ Ruiwen Shu\footnote{Department of Mathematics, University of Georgia, Athens, GA 30602 (ruiwen.shu@uga.edu).}}

\maketitle

\begin{abstract}
	In this paper we study the existence of minimizers for interaction energies with the presence of external potentials. We consider a class of subharmonic interaction potentials, which include the Riesz potentials $|\bx|^{-s},\,\max\{0,d-2\}<s<d$ and its anisotropic counterparts. The underlying space is taken as $\mathbb{R}^d$ or a half-space with possibly curved boundary. We give a sufficient and almost necessary condition for the existence of minimizers, as well as the uniqueness of minimizers. The proof is based on the observation that the Euler-Lagrange condition for the energy minimizer is almost the same as that for the maximizer of the height functional, defined as the essential infimum of the generated potential. We also give two complimentary results: a simple sufficient condition for the existence of minimizers for general interaction/external potentials, and a slight improvement to the known result on the existence of minimizers without external potentials.
\end{abstract}

\section{Introduction}\label{sec_intro}

In this paper we study the minimizers of the interaction energy 
\begin{equation}\label{E}
	E_{W,U}[\rho] = \frac{1}{2}\int_{\mathbb{R}^d}\int_{\mathbb{R}^d}W(\bx-\by)\rd{\rho(\by)}\rd{\rho(\bx)} + \int_{\mathbb{R}^d} U(\bx)\rd{\rho(\bx)}\,,
\end{equation}
where $\rho\in \cM(\mathbb{R}^d)$ is a probability measure. $W:\mathbb{R}^d\rightarrow \mathbb{R}\cup\{\infty\}$ is an interaction potential, which satisfies the basic assumption
\begin{equation}\begin{split}
		\text{{\bf (W0)}: $W$} & \text{ is locally integrable, bounded from below} \\ & \text{and lower-semicontinuous, with $W(\bx)=W(-\bx)$.}
\end{split}\end{equation}
$U:\mathbb{R}^d\rightarrow \mathbb{R}\cup\{\infty\}$ is an external potential, which satisfies
\begin{equation}\begin{split}
		\text{{\bf (U0)}: $U$} & \text{ is locally integrable, bounded from below}  \text{ and lower-semicontinuous.}
\end{split}\end{equation}
As a consequence, $E_{W,U}[\rho]$ is always well-defined for any $\rho\in \cM(\mathbb{R}^d)$, taking values in $\mathbb{R}\cup \{\infty\}$. 

We are interested in the existence of minimizers of $E_{W,U}$ in $\cM(D)$, where the underlying domain is either the whole space\footnote{We will simply say `the minimizer of $E_{W,U}$' in case $D=\mathbb{R}^d$.} $\mathbb{R}^d$, or more generally, a closed subset of $\mathbb{R}^d$. If $D$ is bounded, then the existence can be easily obtained by a lower-semicontinuity argument for a minimizing sequence, c.f. \cite[Lemma 2.2]{SST15}. However, in general one does not expect  the existence of minimizers to hold unconditionally when $D$ is unbounded, in particular when $D=\mathbb{R}^d$, because a minimizing sequence can `escape to infinity' and does not converge weakly to a probability measure.

Without the external potential, the pairwise interaction energy 
\begin{equation}\label{EW}
	E_W[\rho] = \frac{1}{2}\int_{\mathbb{R}^d}\int_{\mathbb{R}^d}W(\bx-\by)\rd{\rho(\by)}\rd{\rho(\bx)}\,, 
\end{equation}
has been studied extensively. The existence of minimizers of $E_W$ is well-understood.
\begin{theorem}[\cite{SST15,CCP15}, with slight improvement]\label{thm_exist2}
	Assume {\bf (W0)}, and
	\begin{equation}
		\lim_{|\bx|\rightarrow\infty}W(\bx)=:W_\infty \in \mathbb{R}\cup\{\infty\}\,.
	\end{equation}
	Then there exists a minimizer of $E_W$ if and only if 
	\begin{equation}\label{thm_exist2_0}
		\textnormal{There exists $\rho\in \cM(\mathbb{R}^d)$ such that $E_W[\rho]\le \frac{1}{2}W_\infty$.}
	\end{equation}
	
	Furthermore, if there exists $\rho\in \cM(\mathbb{R}^d)$ such that $E_W[\rho]< \frac{1}{2}W_\infty$, i.e., 
	\begin{equation}\label{thm_exist2_1}
		\inf_{\rho\in\cM(\mathbb{R}^d)}E_W[\rho] < \frac{1}{2}W_\infty\,,
	\end{equation}
	then any minimizer of $E_W$ is compactly supported, and there exists $R>0$ depending on $W$ such that any minimizer $\rho$ satisfies $\diam(\supp\rho) \le R$.
\end{theorem}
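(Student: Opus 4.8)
\noindent\emph{Necessity and the borderline case.}
Suppose first that $E_W$ admits a minimizer $\rho_*$. Testing the energy against the normalized uniform probability measures on the balls $B_n(0)$ and letting $n\to\infty$ gives $\inf_\rho E_W[\rho]\le\tfrac12 W_\infty$: the near-diagonal contribution is killed by the $|B_n|^{-2}$ normalization because $W\in L^1_{\mathrm{loc}}$, while away from the diagonal $W(\bx-\by)\to W_\infty$. Hence $E_W[\rho_*]=\inf E_W\le\tfrac12 W_\infty$, which is exactly \eqref{thm_exist2_0}. Conversely, if \eqref{thm_exist2_0} holds with some $\rho_0$ and in addition $\inf E_W=\tfrac12 W_\infty$, then $E_W[\rho_0]\le\tfrac12 W_\infty=\inf E_W$ already forces $\rho_0$ to be a minimizer. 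So it remains to produce a minimizer when $\inf E_W<\tfrac12 W_\infty$, and, in that regime, to establish the uniform diameter bound.

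\noindent\emph{Existence when $\inf E_W<\tfrac12 W_\infty$.}
Set $e_\infty:=\inf E_W$; it is finite since $E_W[\rho]\ge\tfrac12\inf_{\bx}W(\bx)>-\infty$ and $e_\infty<\tfrac12 W_\infty$. Let $\rho_n$ be a minimizing sequence, and apply the concentration--compactness principle. \emph{Vanishing is impossible}: writing $W=W_\infty+(W-W_\infty)$, using $W-W_\infty\ge-\varepsilon(r)$ on $\{|\bx-\by|\ge r\}$ with $\varepsilon(r)\to0$, the lower bound $W-W_\infty\ge\inf W-W_\infty$, and $\rho_n\otimes\rho_n(\{|\bx-\by|<r\})\to0$, one gets $\liminf_n E_W[\rho_n]\ge\tfrac12 W_\infty>e_\infty$, a contradiction. \emph{Dichotomy is impossible}: if the mass of $\rho_n$ splits, along a subsequence, into two parts of mass tending to $\theta$ and $1-\theta$ with $\theta\in(0,1)$ that escape from one another, then each piece has self-energy at least $(\mathrm{mass})^2 e_\infty$ and the cross term tends to $\theta(1-\theta)W_\infty$, so $e_\infty\ge\theta^2 e_\infty+(1-\theta)^2e_\infty+\theta(1-\theta)W_\infty$, which simplifies to $e_\infty\ge\tfrac12 W_\infty$ --- again a contradiction. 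Therefore $\rho_n$ is tight up to translations; since $E_W$ is translation invariant and lower semicontinuous under weak convergence of probability measures (because $W$ is lower semicontinuous and bounded below), a suitable translate of a subsequence converges weakly to a probability measure $\rho_*$ with $E_W[\rho_*]\le\liminf_n E_W[\rho_n]=e_\infty$, so $\rho_*$ is a minimizer.

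\noindent\emph{Uniform diameter bound.}
Let $\rho_*$ be any minimizer, and $\lambda:=2e_\infty<W_\infty$. The first-variation argument yields the Euler--Lagrange condition that the generated potential $\psi:=W*\rho_*$ equals $\lambda$ on $\supp\rho_*$ --- it is $\lambda$ $\rho_*$-a.e., hence, by lower semicontinuity of $\psi$, $\le\lambda$ everywhere on $\supp\rho_*$. Fix $\lambda'\in(\lambda,W_\infty)$ and $L_1=L_1(W)$ with $W(\bw)\ge\lambda'$ for $|\bw|\ge L_1$. For $p\in\supp\rho_*$, splitting $\psi(p)=\int W(p-\bz)\rd\rho_*(\bz)$ according to $|p-\bz|<L_1$ or $|p-\bz|\ge L_1$ gives $\lambda\ge(\inf W)\,\rho_*(B_{L_1}(p))+\lambda'\bigl(1-\rho_*(B_{L_1}(p))\bigr)$, hence $\rho_*(B_{L_1}(p))\ge\delta_0:=\frac{\lambda'-\lambda}{\lambda'-\inf W}\in(0,1]$, a quantity depending only on $W$. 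Consequently $\supp\rho_*$ contains at most $N:=\lfloor 1/\delta_0\rfloor$ points that are pairwise $\ge 2L_1$ apart, so it is the union of at most $N$ clusters, each of diameter $\le 4L_1$. If the cluster centers spanned a diameter exceeding $(N-1)L_0$ for a constant $L_0=L_0(W)$ chosen with $\inf_{|\bw|\ge L_0-8L_1}W(\bw)>\lambda$, then projecting the centers onto the line through two extreme points of $\supp\rho_*$ and using the pigeonhole principle we could split $\rho_*$ into two subprobability measures whose supports are more than $L_0-8L_1$ apart; repeating the dichotomy computation (each normalized piece has energy $\ge e_\infty$) would give $\lambda\ge\inf_{|\bw|\ge L_0-8L_1}W(\bw)>\lambda$, which is impossible. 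Hence $\diam(\supp\rho_*)\le(N-1)L_0+8L_1=:R$, with $R$ depending only on $W$; in particular every minimizer is compactly supported.

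\noindent\emph{Main obstacle.}
The conceptual heart, and the step needing the most care, is the concentration--compactness analysis: making the exclusion of dichotomy fully rigorous (the bookkeeping of the mass splitting, the control of the cross term and of the ``remainder'' piece, and the degenerate case $W_\infty=+\infty$), together with the derivation of the Euler--Lagrange conditions under the low regularity permitted by {\bf (W0)}. The clustering argument in the diameter bound is elementary but does require the combinatorial reasoning above rather than a one-line appeal to $\psi\to W_\infty$.
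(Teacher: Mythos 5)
Your proposal is correct in substance, but it takes a genuinely different route from the paper on the part that is actually new. For the ``if and only if'' statement you re-sketch the concentration--compactness argument; the paper simply cites \cite{SST15} for that and instead supplies an existence proof that bypasses concentration--compactness: it studies minimizers of $E_W$ over balls $\overline{B(0;R)}$, proves the uniform diameter bound for those \emph{constrained} minimizers, and then upgrades a minimizer on $\overline{B(0;R_2)}$ to a global one by translation together with Lemma \ref{lem_trunres}. Your deferred ``bookkeeping'' of dichotomy (remainder piece, the case $W_\infty=\infty$) is thus not a fatal gap but a heavier dependency, precisely the content the paper outsources. For the diameter bound your argument differs in two ways, both sound: (i) you get the local mass bound $\rho_*(B_{L_1}(p))\ge\delta_0$ directly from the Euler--Lagrange condition ($W*\rho_*\le 2\inf E_W$ on $\supp\rho_*$, via the $\rho_*$-a.e.\ identity and lower semicontinuity of $W*\rho_*$), replacing the cited Lemma \ref{lem_rhoRm} (\cite[Lemma 2.6]{CCP15}); and (ii) since you already possess a \emph{global} minimizer, the no-large-gap step collapses to the subadditivity comparison $E_W[\mu]\ge\mu(\mathbb{R}^d)^2\inf E_W$ for each separated piece, yielding $2\inf E_W\ge\inf_{|\bw|\ge d_0}W$ and a contradiction with the choice of $d_0$; the paper cannot argue this way because it needs the gap estimate for ball-constrained minimizers \emph{before} existence is known, which is why it proves the renormalization inequality \eqref{rhoR12} instead. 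Both routes avoid the monotonicity hypothesis \eqref{assu_CCP}, and your constants ($\inf E_W$, $\lambda'$, $L_1$, $\delta_0$, $N$, $L_0$) indeed depend only on $W$, so $R=R(W)$ as required. The trade-off is that your scheme is shorter for the diameter bound but leans on the full concentration--compactness machinery for existence, whereas the paper's ball-restriction scheme delivers existence and the diameter bound simultaneously by more elementary means.
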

We leave the further discussions and the proof of the new part in Theorem \ref{thm_exist2} to Section \ref{sec_exist2}. Some related results can also be found in \cite{CFT15}.

The uniqueness of minimizers of $E_W$, up to translation, can be obtained under the condition of linear interpolation convexity (LIC) \cite{Lop19,CS21,ST,DLM1,DLM2,Fra,Shu_convex}. Based on the LIC property and the sufficiency of the Euler-Lagrange condition, explicit formulas for the minimizers of $E_W$ have been obtained for some cases of the power-law potential $W_{a,b}(\bx) = \frac{|\bx|^a}{a} - \frac{|\bx|^b}{b}$, which include minimizers as a locally integrable function \cite{CH17,CS21,Fra,Shu_expli}, spherical shells \cite{BCLR13_2,DLM1,FM} and discrete measures \cite{DLM2}. Minimizers for some anisotropic interaction energies were also derived from this approach in 2D and 3D \cite{MRS,CMMRSV1,CMMRSV2,MMRSV1,MMRSV2,CS_2D,CS_3D}, and very recently in higher dimensions \cite{FMMRSV}. For the energy $E_{W,U}$ involving the external potentials, similar convexity conditions and explicit minimizers, as well as some existence theory, were also obtained in some recent works \cite{ST,DOSW,DOSW25,CMSVW}.

In this paper we focus on the existence of minimizers of $E_{W,U}$. It is not too hard to find sufficient conditions for the existence of minimizers: we will present a known condition in Theorem \ref{thm_saff}, as well as a new one in Theorem \ref{thm_exist1} whose proof is short. However, it seems to be very challenging to find \emph{necessary} and sufficient conditions for the existence of minimizers, analogous to the condition \eqref{thm_exist2_0} for $E_W$. {One known such condition is stated in Theorem \ref{thm_zorii} for a special type of $U$, but no necessary and sufficient condition is known for general $U$ satisfying {\bf (U0)}.}

We will introduce a new class of subharmonic interaction potentials in Section \ref{sec_ECintro}, called \emph{essentially convex} potentials, which generalizes the singular power-law repulsion (also known as Riesz potentials) $W(\bx)=-\frac{|\bx|^b}{b}$ with $-d<b<\min\{2-d,0\}$ and its anisotropic counterparts. Our main result, Theorem \ref{thm_main}, gives a sufficient and `almost necessary' condition for the existence of minimizers when $W$ is essentially convex, and the underlying domain $D$ is either $\mathbb{R}^d$ or a half-space with possibly curved boundary.

This result relies on the key property that $\Delta W>0$ on $\mathbb{R}^d\backslash \{0\}$. Without this property, say, when $d\ge 3$ and $W(\bx) = -\frac{|\bx|^b}{b}$ with $2-d<b<0$, we will show in Theorem \ref{thm_counter} that the same condition in Theorem \ref{thm_main} can no longer guarantee the existence of minimizer. For such superharmonic potentials $W$, it is still widely open to find a necessary and sufficient condition for the existence of minimizer of $E_{W,U}$. 

\subsection{Some sufficient conditions for the existence of minimizers}

For the power-law repulsions $W(\bx)=|\bx|^{-s},\,0<s<d$, \cite{DOSW} gives a sufficient condition for the existence of minimizers.
\begin{theorem}\cite[Theorem 2.4(i)]{DOSW}\label{thm_saff}
	Assume $W(\bx)=|\bx|^{-s},\,0<s<d$, and $U$ satisfies {\bf (U0)}. Assume $\lim_{|\bx|\rightarrow\infty}U(\bx) =U_{\infty}\in \mathbb{R}$, and
	\begin{equation}\label{saffcond}
		|\bx|^s(U(\bx)-U_\infty) \le -1\,,
	\end{equation}
	for any sufficiently large $|\bx|$. Then there exists a minimizer of $E_{W,U}$.
\end{theorem}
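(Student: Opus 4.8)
The plan is to construct, from an arbitrary minimizing sequence, a limit that is actually a probability measure, by showing that a minimizing sequence cannot lose mass at infinity. Fix a minimizing sequence $\rho_n\in\cM(D)$ for $E_{W,U}$. Since $W=|\bx|^{-s}\ge 0$ and, after subtracting the constant $U_\infty$, the modified external potential $\tilde U := U - U_\infty$ is bounded below and tends to $0$ at infinity, we may assume $E_{W,\tilde U}[\rho_n]$ is bounded above; in particular $\int \tilde U\rd\rho_n$ is bounded above. By Prokhorov's theorem, $\rho_n$ has a subsequence (not relabeled) converging weakly-$*$ in the sense of measures on $\mathbb{R}^d$ to some nonnegative measure $\rho_\infty$ with $m := \rho_\infty(\mathbb{R}^d) \le 1$; the deficit $1-m$ is the mass that escapes to infinity. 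The goal is to rule out $m<1$.

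The key quantitative step is to compare $E_{W,U}[\rho_n]$ with the energy of a competitor obtained by transporting the escaping mass back to a fixed large ball. Concretely, I would split $\rho_n = \rho_n^{\mathrm{in}} + \rho_n^{\mathrm{out}}$ where $\rho_n^{\mathrm{in}}$ is the restriction to a ball $B_{R_0}$ with $R_0$ chosen past the threshold in \eqref{saffcond}, and estimate how the energy changes when $\rho_n^{\mathrm{out}}$ is moved inward. Moving a unit of mass from a far-away point $\bx$ (with $|\bx|$ large) to a point $\by$ near the origin changes the external-potential contribution by $U(\by)-U(\bx) = (\tilde U(\by)+U_\infty) - (\tilde U(\bx)+U_\infty) = \tilde U(\by) - \tilde U(\bx)$, and by \eqref{saffcond} we have $\tilde U(\bx) = U(\bx)-U_\infty \le -|\bx|^{-s}$, so this contribution \emph{decreases} by at least $|\bx|^{-s} - \tilde U(\by) \ge |\bx|^{-s} - o(1)$ if $\by$ is taken large too; meanwhile the interaction part increases, but the self-interaction of $\rho_n^{\mathrm{out}}$ with itself and with $\rho_n^{\mathrm{in}}$ is controlled because $W(\bx-\by)\le |\bx-\by|^{-s}$ is small when the two pieces are far apart. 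The precise bookkeeping is: placing the escaping mass on a thin spherical shell of radius $r$ (large but fixed relative to $n$), the extra interaction energy it generates is $O(r^{-s})$ per unit mass of $\rho_n$ it interacts with, while the gain from \eqref{saffcond} is at least of order $r^{-s}$ with a strictly better constant once $r$ is large — here the factor $-1$ (rather than just a negative number) in \eqref{saffcond} is exactly what makes the gain beat the loss. This shows that if a positive amount of mass $\delta = 1-m>0$ escaped, then for $n$ large one could strictly lower the energy by a fixed amount, contradicting that $\rho_n$ is a minimizing sequence; hence $m=1$, $\rho_n\to\rho_\infty$ weakly-$*$ with $\rho_\infty\in\cM(D)$.

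Once tightness is established, standard lower semicontinuity finishes the argument: $W$ is lower-semicontinuous and bounded below, so $\rho\mapsto \iint W\,d\rho\,d\rho$ is weakly-$*$ lower semicontinuous (e.g. by \cite[Lemma 2.2]{SST15} or a truncation argument), and $U$ is lower-semicontinuous and bounded below, so $\rho\mapsto\int U\,d\rho$ is as well; therefore $E_{W,U}[\rho_\infty]\le\liminf_n E_{W,U}[\rho_n] = \inf_{\cM(D)} E_{W,U}$, so $\rho_\infty$ is a minimizer.

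I expect the main obstacle to be the second step: making the competitor construction quantitative enough that the gain from \eqref{saffcond} strictly dominates the interaction loss, uniformly in $n$. The subtlety is that $\rho_n^{\mathrm{in}}$ could itself be spread out, so the interaction between the relocated mass and $\rho_n^{\mathrm{in}}$ is not automatically negligible; one must place the relocated mass far enough out (at radius $r$ chosen large, but independent of $n$) that $W$ evaluated between it and the bulk is uniformly $\le \varepsilon r^{-s}$, and simultaneously close enough that \eqref{saffcond} still applies with its clean constant. Balancing these two requirements — and handling the self-interaction of the relocated shell, which scales like $\delta^2 \cdot (\text{shell radius})^{-s}$ and so is also $O(r^{-s})$ and can be absorbed for small $\delta$ — is the technical heart of the proof. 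A convenient way to organize it is to note that for $|\bx|$ large, $U(\bx) \le U_\infty - |\bx|^{-s} = U_\infty - W(\bx)$, i.e. $U$ dominates $U_\infty$ minus the potential a unit point mass at the origin would generate, which is precisely the borderline case in which concentration at infinity is energetically neutral, and \eqref{saffcond} pushes it to be strictly disadvantageous.
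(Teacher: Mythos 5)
First, a point of reference: the paper does not prove Theorem \ref{thm_saff} at all --- it is quoted verbatim from \cite[Theorem 2.4(i)]{DOSW} --- so there is no internal proof to compare yours against; your attempt has to stand on its own.

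Your overall strategy (take a minimizing sequence, extract a vague limit with possible mass defect $\delta=1-m$, rule out $\delta>0$ by relocating the escaping mass to a region where \eqref{saffcond} applies, then conclude by lower semicontinuity) is a reasonable route and can in fact be completed. The genuine gap is in the quantitative heart of Step 2, exactly where you locate the difficulty but propose the wrong resolution. You claim the relocated mass can be placed so that its interaction with the bulk is $\le \varepsilon r^{-s}$ while the gain from \eqref{saffcond} is of order $r^{-s}$ ``with a strictly better constant.'' Neither claim holds. The bulk has mass $m=1-\delta$, which can be arbitrarily close to $1$ and can sit wherever it maximizes interaction with the relocated blob (e.g.\ concentrated at the origin when you relocate to a shell of radius $r$): then the extra interaction is $(1-\delta)\,\delta\, r^{-s}$, with the \emph{same} leading constant as the gain $\delta r^{-s}$ coming from \eqref{saffcond}. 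The constant $-1$ is precisely the borderline at which the potential $W*\mu(\bx)\sim|\bx|^{-s}$ of \emph{any} probability measure cancels the gain at first order --- your own closing remark makes this observation --- so there is no first-order margin; the true margin is only second order, of size $\delta^2 r^{-s}$, coming from the mass deficit itself. Against this second-order margin you must weigh the self-energy of the relocated blob, which is also $\sim \tfrac12\delta^2 c\, r^{-s}$ with a shape constant $c$; for your thin spherical shell $c=\iint_{S^{d-1}\times S^{d-1}}|x-y|^{-s}\,d\sigma\,d\sigma$, which blows up as $s\uparrow d-1$ and is infinite for $d-1\le s<d$, so the proposed competitor simply fails for a whole range of admissible $s$, and even when finite the constant need not be below the margin.

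The argument can be repaired, but it requires a different bookkeeping than the one you describe: spread the escaping mass $\delta_n$ as $\delta_n\nu_T$, where $\nu_T$ is the uniform probability on a thick annulus $\{r_0\le|\bx|\le T\}$ (inner radius $r_0$ past the threshold in \eqref{saffcond}, $T$ large). Then the gain per unit relocated mass is $\int|\bx|^{-s}\,d\nu_T=(W*\nu_T)(0)$, the worst-case interaction with the bulk per unit bulk mass is $\sup_{\by}(W*\nu_T)(\by)\le(1+\eta_T)(W*\nu_T)(0)$ (the potential of a ball-like radial blob essentially peaks at the center, with $\eta_T\to0$), and the self-energy constant obeys $I(\nu_T)=\int(W*\nu_T)\,d\nu_T\le\sup(W*\nu_T)$. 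The total change of energy is then at most $\delta_n (W*\nu_T)(0)\bigl[-1+(1+\eta_T)(1-\tfrac{\delta_n}{2})\bigr]+o(1)$, which is negative and bounded away from $0$ once $T$ is chosen large depending on $\delta$ and $n$ is large, giving the desired contradiction with the minimizing property. So the key inequality is not ``the bulk is far away, hence the interaction is small,'' but rather ``gain $=$ potential of the relocated blob at the origin $\ge (1-\eta_T)\times$ (its potential anywhere, hence both the bulk interaction and the self-energy constants),'' with the strict margin supplied by the deficit $\delta$. As written, your proposal does not contain this mechanism, and the step you flag as the technical heart would fail if carried out as described. (Your Step 1 and the final lower-semicontinuity step are fine.) Note also that the paper's own machinery, Theorem \ref{thm_main}, yields results of this flavor only under a strict inequality such as \eqref{thm_main_1}, which is consistent with the fact that the non-strict, constant-$(-1)$ condition \eqref{saffcond} leaves no first-order room.
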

We remark that \cite{DOSW} can handle more general situations, including a general unbounded underlying domain other than $\mathbb{R}^d$, or \eqref{saffcond} having certain failure set.

{
We also mention the significant works of N. Zorii \cite{Zorii_old1,Zorii_old2,Zorii_old3,Zorii23_1,Zorii23_2} on the existence of energy minimizers with external potentials. In particular, for $d\ge 2$, $W(\bx)=|\bx|^{-s},\,d-2<s<d$ and a special class of $U$, \cite{Zorii23_2} gives a necessary and sufficient condition on the existence. We formulate it here when the underlying space is $\mathbb{R}^d$.
\begin{theorem}[{\cite[Theorem 2.6]{Zorii23_2}}]\label{thm_zorii}
	Assume $d\ge 2$, $W(\bx)=|\bx|^{-s},\,d-2<s<d$, and $U=-W*\omega$ for some positive measure $\omega$ on $\mathbb{R}^d$. Then there exists a minimizer of $E_{W,U}$ if and only if $\omega(\mathbb{R}^d)\ge 1$.
\end{theorem}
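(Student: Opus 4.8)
The plan is to turn the minimization into a Hilbert-space projection. Since $W(\bx)=|\bx|^{-s}$ with $d-2<s<d$ is a strictly positive-definite (in fact \emph{perfect}) kernel, the bilinear form $\langle\mu,\nu\rangle_W:=\int\int W(\bx-\by)\rd\mu(\bx)\rd\nu(\by)$ is an inner product on the space $\cE_W$ of signed measures of finite $W$-energy $I_W[\mu]:=\langle\mu,\mu\rangle_W$, with norm $\|\mu\|_W:=I_W[\mu]^{1/2}$. Assumption {\bf (U0)} forces $W*\omega=-U$ to be bounded above, hence $I_W[\omega]\le\big(\sup W*\omega\big)\,\omega(\mathbb{R}^d)<\infty$, so $\omega\in\cE_W$. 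Substituting $U=-W*\omega$ and using the symmetry of $W$, one obtains the identity
\[
E_{W,U}[\rho]=\frac12\,\|\rho-\omega\|_W^2-\frac12\,\|\omega\|_W^2,\qquad\rho\in\cM(\mathbb{R}^d).
\]
Thus a minimizer of $E_{W,U}$ over $\cM(\mathbb{R}^d)$ is exactly a probability measure realizing $d_W:=\inf\{\|\rho-\omega\|_W:\rho\in\cM(\mathbb{R}^d)\}$, and $\inf_{\cM(\mathbb{R}^d)}E_{W,U}=\tfrac12 d_W^2-\tfrac12\|\omega\|_W^2$.

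For the necessity of $\omega(\mathbb{R}^d)\ge1$, suppose $M:=\omega(\mathbb{R}^d)<1$. Take $\rho_n:=\omega+(1-M)\mu_n$, where $\mu_n$ is the normalized Lebesgue measure on the ball of radius $n$ centered at a point moving off to infinity; then $\rho_n\in\cM(\mathbb{R}^d)$ and, since $\rho_n-\omega=(1-M)\mu_n$ is a scalar multiple of a single positive measure, $\|\rho_n-\omega\|_W^2=(1-M)^2 I_W[\mu_n]$. By scaling, $I_W[\mu_n]$ equals $n^{-s}$ times the Riesz $s$-energy of the uniform probability measure on the unit ball, which is finite since $s<d$; hence $\|\rho_n-\omega\|_W\to0$, so $d_W=0$ and $\inf_{\cM(\mathbb{R}^d)}E_{W,U}=-\tfrac12\|\omega\|_W^2$. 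By strict positive-definiteness this value is attained only by $\rho$ with $\|\rho-\omega\|_W=0$, i.e. $\rho=\omega$, which is not a probability measure; so $E_{W,U}$ has no minimizer.

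For the sufficiency when $M\ge1$, I would first prove that $d_W$ is attained. Along a minimizing sequence $(\rho_n)$ the energies $I_W[\rho_n]$ are bounded (again by {\bf (U0)}), and since $\cM(\mathbb{R}^d)$ is convex the parallelogram identity gives $\|\rho_n-\rho_m\|_W^2\le 2\|\rho_n-\omega\|_W^2+2\|\rho_m-\omega\|_W^2-4d_W^2\to0$, so $(\rho_n)$ is $\|\cdot\|_W$-Cauchy. A vaguely convergent subsequence has a limit $\rho^*$ that is a sub-probability measure of finite energy, and because the Riesz kernel is perfect, the strong limit of a uniformly bounded $\|\cdot\|_W$-Cauchy sequence of positive measures agrees with its vague limit; hence $\rho_n\to\rho^*$ strongly and $\|\rho^*-\omega\|_W=d_W$. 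As $\cM(\mathbb{R}^d)$ lies in the convex set $\cM_{\le1}$ of sub-probability measures of finite energy, $\rho^*$ is also the unique metric projection of $\omega$ onto $\cM_{\le1}$. To conclude I would invoke the following elementary Hilbert-space fact: if $C$ is a convex cone (here the cone of positive finite-energy measures), $f$ a continuous linear functional (here total mass), $\omega\in C$ with $f(\omega)\ge1$, and $C_1:=\{x\in C:f(x)\le1\}$, then the point of $C_1$ closest to $\omega$ has $f$-value exactly $1$ --- otherwise it would lie in the $C$-interior of the constraint, hence be a local and (by convexity) global minimizer of $\|\cdot-\omega\|$ over $C$, forcing it to equal $\omega$ and contradicting $f(\omega)\ge1$. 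Applying this with $f(\omega)=M\ge1$ gives $\rho^*(\mathbb{R}^d)=1$, so $\rho^*\in\cM(\mathbb{R}^d)$ is the desired minimizer (unique, again by strict positive-definiteness).

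The main obstacle is the attainment step in the sufficiency part: one must know that the $\|\cdot\|_W$-Cauchy limit of probability measures is still a (sub-probability) measure and that its strong and vague limits coincide --- that is, that the Riesz kernel on $\mathbb{R}^d$ with $d-2<s<d$ is perfect --- together with the other classical inputs used above (strict positive-definiteness, and boundedness of $W*\omega$ from {\bf (U0)}). Granting these, the convexity argument pinning the total mass to $1$ is short, and the whole proof is clean; an alternative route would replace the abstract projection lemma by an explicit balayage construction producing the minimizer directly from $\omega$.
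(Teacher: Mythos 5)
You should note first that the paper does not actually prove Theorem \ref{thm_zorii}: it is quoted from Zorii, and the paper only remarks (after Proposition \ref{prop_nece}) that the `if' part follows from Theorem \ref{thm_main} for sufficiently regular, compactly supported $\omega$ with $\omega(\mathbb{R}^d)>1$. So your energy-space projection argument is in any case a different route, and its skeleton is sound: the identity $E_{W,U}[\rho]=\tfrac12\|\rho-\omega\|_W^2-\tfrac12\|\omega\|_W^2$, the necessity argument via far-away low-energy bumps, and the attainment of the projection through the parallelogram law plus perfectness of the Riesz kernel are all classical and correct, \emph{provided} $\omega$ has finite energy. Under {\bf (U0)} this follows from $\omega(\mathbb{R}^d)<\infty$, but the case $\omega(\mathbb{R}^d)=\infty$ (where the hypothesis $\omega(\mathbb{R}^d)\ge1$ holds trivially and the theorem still asserts existence) is outside your framework as written, since then $\|\omega\|_W$ may be infinite and the reformulation is meaningless; Zorii's balayage approach covers it.

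The genuine gap is in your ``elementary Hilbert-space fact.'' Total mass is \emph{not} a continuous linear functional on the energy space: spreading a fixed mass $t$ uniformly over a ball of radius $R$ placed far away has energy norm of order $tR^{-s/2}\rightarrow0$, so every $\|\cdot\|_W$-ball around $\rho^*$ contains positive measures of arbitrarily large mass — your own necessity construction exploits precisely this phenomenon (mass drops from $1$ to $M$ in the strong limit). Hence the constraint set $\{f\le1\}$ has empty interior relative to the cone $C$, and the step ``$f(\rho^*)<1$ implies $\rho^*$ lies in the $C$-interior, hence is a local and therefore global minimizer over $C$'' does not follow as stated. The conclusion can be rescued, but by the first-order variational inequality rather than an interior argument: if $\rho^*(\mathbb{R}^d)<1$, then for every positive finite-energy $c$ with finite mass the segment $(1-t)\rho^*+tc$ stays in $C_1$ for small $t>0$, so minimality gives $\langle\omega-\rho^*,\,c-\rho^*\rangle_W\le0$; taking $c=\omega\chi_{B(0;R)}$ and letting $R\rightarrow\infty$ (these truncations converge to $\omega$ strongly since $I_W[\omega]<\infty$) yields $\|\omega-\rho^*\|_W^2\le0$, i.e.\ $\rho^*=\omega$, contradicting $\omega(\mathbb{R}^d)\ge1>\rho^*(\mathbb{R}^d)$. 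You also need the one-line observation (same far-away-bump trick) that $\dist(\omega,\cM_{\le1}\cap\cE_W)=\dist(\omega,\cM(\mathbb{R}^d)\cap\cE_W)$, so that $\rho^*$ really is the projection onto the sub-probability set. With these repairs your proof is complete for finite-energy $\omega$, and it is genuinely more self-contained than the paper's treatment (which delegates to Zorii and only recovers the `if' part, under extra regularity, from Theorem \ref{thm_main}).
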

Under the same assumptions on $W$ and $U$, \cite{Zorii23_2} also gives a necessary and sufficient condition for general underlying domains, using a concept called `inner balayage' of a measure $\omega$ onto the underlying domain.
}

Then, for general interaction potentials $W$, we state a simple condition in the spirit of Theorem \ref{thm_exist2}, showing that if $U$ is sufficiently large at infinity, then there exists a minimizer of $E_{W,U}$.
\begin{theorem}\label{thm_exist1}
	Assume {\bf (W0)}, {\bf (U0)} and $\lim_{|\bx|\rightarrow\infty}U(\bx) =U_{\infty}\in \mathbb{R}\cup\{\infty\}$. If
	\begin{equation}\label{thm_exist1_1}
		\inf_{\rho\in\cM(\mathbb{R}^d)}E_{W,U}[\rho] < \frac{1}{2}(U_\infty + \inf U + \inf W)\,,
	\end{equation}
	then there exists a minimizer of $E_{W,U}$, and any minimizer of $E_{W,U}$ is compactly supported.
\end{theorem}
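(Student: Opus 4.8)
The plan is to follow the classical concentration–compactness strategy for a minimizing sequence, using the hypothesis \eqref{thm_exist1_1} to rule out the bad scenario of ``escape to infinity.'' First I would take a minimizing sequence $(\rho_n)\subset\cM(\mathbb{R}^d)$ for $E_{W,U}$, so that $E_{W,U}[\rho_n]\to \inf_{\rho}E_{W,U}[\rho]=:\mathsf{m}$, and by hypothesis $\mathsf{m}<\tfrac12(U_\infty+\inf U+\inf W)$. The immediate goal is to show that some subsequence does not lose mass at infinity, i.e., there exist $R>0$ and a subsequence along which $\rho_n(B_R(\bz_n))\ge \delta>0$ for a bounded sequence of centers $\bz_n$; since the energy is translation-dependent through $U$ we cannot recenter freely, so in fact I expect the centers to be bounded automatically (otherwise $\int U\rd\rho_n$ would be too large because $U\to U_\infty$ and $\inf W$ bounds the interaction term from below, contradicting $\mathsf{m}<\tfrac12(U_\infty+\inf U+\inf W)\le U_\infty + \tfrac12\inf W$ roughly). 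Passing to a weak-$*$ limit, write $\rho_n \rightharpoonup \rho$ with $\rho(\mathbb{R}^d)=1-\theta$ for some $\theta\in[0,1]$ representing the mass that escapes to infinity; the target is $\theta=0$.

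Next I would make the dichotomy quantitative. Split $\rho_n$ into a ``core'' part $\mu_n$ supported in a large ball and a ``tail'' part $\nu_n$ with $\nu_n(\mathbb{R}^d)\to\theta$ and whose support escapes to infinity (and spreads out, after a further Lévy-type concentration argument inside the tail, so that the $W$-self-interaction of $\nu_n$ is asymptotically $\ge \tfrac12\theta^2 W_\infty$ if $W_\infty$ exists — but since we only assume $W$ bounded below, I will just use $E_W[\nu_n]\ge \tfrac12\theta^2\inf W$, or more carefully that the cross term $\int\int W\rd\mu_n\rd\nu_n \ge (1-\theta)\theta\inf W$ and $E_W[\nu_n]\ge\tfrac12\theta^2\inf W$). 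Then expand
\begin{equation}
E_{W,U}[\rho_n] \ge E_{W,U}[\mu_n] + \tfrac12\theta^2\inf W + (1-\theta)\theta \inf W + \int U\rd\nu_n + o(1).
\end{equation}
Since $U\to U_\infty$ on the escaping support, $\int U\rd\nu_n \to \theta U_\infty$. The core part satisfies $E_{W,U}[\mu_n]\ge E_{W,U}[\hat\mu_n]$ after renormalizing $\hat\mu_n=\mu_n/(1-\theta)\in\cM$ up to controlled errors; using $E_{W,U}[\hat\mu_n]\ge \mathsf{m}$ and the scaling $E_W[\mu_n]=(1-\theta)^2 E_W[\hat\mu_n]$, $\int U\rd\mu_n=(1-\theta)\int U\rd\hat\mu_n$, one gets a lower bound of the form $E_{W,U}[\mu_n]\ge (1-\theta)^2\mathsf{m} - (\text{correction involving }\inf U,\inf W)$. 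Assembling everything and letting $n\to\infty$ yields an inequality $\mathsf{m}\ge f(\theta)$ for an explicit quadratic-type function $f$ with $f(0)=\mathsf{m}$ and (this is where \eqref{thm_exist1_1} enters) $f(\theta)>\mathsf{m}$ for all $\theta\in(0,1]$, forcing $\theta=0$. Concretely the threshold $\tfrac12(U_\infty+\inf U+\inf W)$ is exactly the value needed so that putting a small mass $\theta$ ``at infinity'' costs at least $\theta\cdot\tfrac12(U_\infty+\inf U+\inf W)\cdot(1+o(1))$ more than keeping it, given the best the core can do.

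Once $\theta=0$, $\rho_n\rightharpoonup\rho\in\cM(\mathbb{R}^d)$ (tightness), and lower semicontinuity of $E_{W,U}$ under weak-$*$ convergence — which follows from {\bf (W0)}, {\bf (U0)} and a standard argument (e.g.\ truncating $W$ and $U$ from below by bounded continuous functions and using monotone convergence, as in \cite[Lemma 2.2]{SST15}) — gives $E_{W,U}[\rho]\le\liminf E_{W,U}[\rho_n]=\mathsf{m}$, so $\rho$ is a minimizer. Finally, for compact support of \emph{any} minimizer $\rho$: if $\supp\rho$ were unbounded, pick a minorizing tail mass $\varepsilon$ outside a huge ball and compare $\rho$ with the measure obtained by removing that tail and re-placing its mass near the bulk; the strict inequality \eqref{thm_exist1_1} again shows this strictly decreases the energy, contradicting minimality — alternatively, apply the already-established inequality $\mathsf{m}=E_{W,U}[\rho]\ge f(\theta_\rho)$ where $\theta_\rho$ measures the mass of $\rho$ beyond radius $R$ in a limiting sense, forcing $\theta_\rho=0$ at every scale, hence a uniform support bound. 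I expect the main obstacle to be the first paragraph's bookkeeping: because $U$ breaks translation invariance one must carefully argue that the centers of mass stay bounded and set up the core/tail splitting so that all the $o(1)$ error terms (from the region between the core ball and the tail, and from $U$ not being exactly constant at infinity) are genuinely negligible; the quadratic comparison itself is then routine algebra driven by \eqref{thm_exist1_1}.
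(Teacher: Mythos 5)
Your strategy is correct, but it is genuinely different from the paper's. The paper never runs concentration--compactness on a minimizing sequence over $\mathbb{R}^d$: after normalizing $\inf W=\inf U=0$, it minimizes $E_{W,U}$ on balls $\overline{B(0;R)}$ (where existence is immediate from compactness and lower semicontinuity), invokes the Euler--Lagrange condition of Lemma \ref{lem_EL} for each ball-minimizer $\rho_R$ to show $U\le C_0\le 2E_{W,U}[\rho_R]$ near any support point, and then uses $\lim_{|\bx|\to\infty}U=U_\infty>2\inf E_{W,U}$ (which is exactly \eqref{thm_exist1_1} after normalization) to get a \emph{uniform} support radius $R_*$ for all ball-minimizers; combined with the truncation Lemma \ref{lem_trunres}, a minimizer on $\overline{B(0;R_*)}$ is then identified as a global minimizer, and the same support bound applies to any global minimizer. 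Your route instead rules out mass escape directly: splitting $\rho_n=\rho_n\chi_{B(0;R_n)}+\rho_n\chi_{B(0;R_n)^c}$ and bounding the interaction terms by $\inf W$ and the core by $(1-\theta)^2\mathsf{m}+\theta(1-\theta)\inf U$ gives $(2-\theta)\mathsf{m}\ge(1-\theta)\inf U+(1-\tfrac{\theta}{2})\inf W+U_\infty\ge(2-\theta)\cdot\tfrac12(U_\infty+\inf U+\inf W)$ for any escaping mass $\theta\in(0,1]$, contradicting \eqref{thm_exist1_1}; your threshold arithmetic checks out, and the same inequality applied to the tail of a minimizer yields compact support. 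Two remarks: much of your bookkeeping worry is unnecessary --- you need neither bounded centers nor any L\'evy-type concentration or spreading of the tail, since the crude $\inf W$ bounds and a diagonal choice of $R_n$ (or passing $n\to\infty$ before $R\to\infty$ at fixed cut radius) already suffice, and the case $\theta=1$ is covered by the same inequality; on the other hand, the ``remove the tail and re-place its mass near the bulk'' variant of your compact-support argument needs the replaced mass to be spread over a large ball to avoid blowing up the interaction term, whereas your ``alternatively'' version is clean. What each approach buys: yours avoids the Euler--Lagrange machinery and gives tightness of minimizing sequences directly; the paper's avoids concentration--compactness bookkeeping, reuses Lemmas \ref{lem_EL} and \ref{lem_trunres}, and produces an explicit uniform radius $R_*$ containing the support of every ball-minimizer and every global minimizer.
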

This theorem will be proved in Section \ref{sec_exist1}.

The conditions in Theorems \ref{thm_saff} and \ref{thm_exist1} are far from being necessary. In fact, for Theorem \ref{thm_saff}, by considering $W(\bx)=|\bx|^{-s},\,0<s<d$ and $U = -C \chi_{\overline{B(0;1)}}$ where $C>0$ is large, it is easy to see that \eqref{thm_exist1_1} is true. So there exists a minimizer, but \eqref{saffcond} is clearly false. For Theorem \ref{thm_exist1}, if one considers $U=0$, then \eqref{thm_exist1_1} degenerates into $\inf_{\rho\in\cM(\mathbb{R}^d)}E_{W}[\rho] < \frac{1}{2} \inf W$ which is never true, and thus it cannot be a necessary condition. {Theorem \ref{thm_zorii}, although providing a necessary and sufficient condition, only works when $U$ can be expressed in the form $U=-W*\omega$ and does not work for more general $U$.}

\subsection{The Euler-Lagrange condition and strong convexity}

For the purpose of stating our main result, in this subsection we recall the Euler-Lagrange condition (also known as the Frostman's inequality) for minimizers, the convexity theory which guarantees the uniqueness of minimizers, and their relation. For any $\rho\in\cM(\mathbb{R}^d)$, define the \emph{generated potential}
\begin{equation}
	V[\rho] := W*\rho+U\,.
\end{equation}

\begin{lemma}[Euler-Lagrange condition]\label{lem_EL}
	Assume $W,U$ satisfy {\bf (W0)}{\bf (U0)}. 	Let $D\subset \mathbb{R}^d$ be a closed set such that $\inf_{\cM(D)}E_{W,U}<\infty$.  If $\rho\in\cM(D)$ is a minimizer of $E_{W,U}$ in $\cM(D)$, then
	\begin{equation}\label{lem_EL_1}
		V[\rho] \le C_0,\quad \textnormal{on } \supp\rho\,;\qquad V[\rho] \ge C_0,\quad\textnormal{a.e. }D\,,
	\end{equation}
	and
	\begin{equation}\label{lem_EL_2}
		V[\rho]=C_0,\quad \rho\textnormal{-a.e.}
	\end{equation}
	for some $C_0\in\mathbb{R}$.
\end{lemma}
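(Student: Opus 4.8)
The plan is to derive the Euler--Lagrange (Frostman) inequalities by the standard variational perturbation argument, comparing the minimizer $\rho$ against nearby competitors obtained by moving a small amount of mass. First I would set $C_0 := \int V[\rho]\rd\rho = 2 E_{W,U}[\rho] - \int U\rd\rho + \int U \rd\rho$; more precisely, note that $E_{W,U}[\rho] = \frac12\int (W*\rho)\rd\rho + \int U\rd\rho$, so a natural candidate is $C_0 = \int V[\rho]\rd\rho - E_{W,U}[\rho] = \frac12\int(W*\rho)\rd\rho$, but I would instead \emph{define} $C_0$ to be the essential infimum of $V[\rho]$ over $D$ (or equivalently the number forced by the two inequalities) and then verify all three conclusions are consistent. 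The key computation: for any $\sigma\in\cM(D)$ and $t\in[0,1]$, convexity of the quadratic form gives
\begin{equation}
	E_{W,U}[(1-t)\rho + t\sigma] = E_{W,U}[\rho] + t\Big(\int V[\rho]\rd\sigma - \int V[\rho]\rd\rho\Big) + t^2\big(E_W[\sigma-\rho]\big)\,,
\end{equation}
where $E_W[\sigma-\rho] = \frac12\iint W(\bx-\by)\rd(\sigma-\rho)(\by)\rd(\sigma-\rho)(\bx)$ is finite provided the relevant integrals make sense. Since $\rho$ is a minimizer, the coefficient of $t$ must be nonnegative, i.e. $\int V[\rho]\rd\sigma \ge \int V[\rho]\rd\rho =: C_0$ for all admissible $\sigma$.

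Next I would extract the pointwise consequences. Taking $\sigma = \delta_\bx$ for $\bx\in D$ (after checking $E_{W,U}[\delta_\bx] = U(\bx)$ could be $+\infty$, which is harmless) and using that $V[\rho]$ is lower-semicontinuous and locally integrable (as $W*\rho$ is l.s.c.\ by Fatou and $U$ satisfies {\bf(U0)}), one gets $V[\rho](\bx)\ge C_0$ quasi-everywhere, hence a.e.\ on $D$ after the usual argument that the exceptional set has zero capacity and therefore Lebesgue measure zero. For the reverse inequality on $\supp\rho$: if $V[\rho] > C_0 + \epsilon$ on a set $A$ with $\rho(A) > 0$, then since $\int V[\rho]\rd\rho = C_0$ we would need $V[\rho] < C_0$ on a positive-$\rho$-measure set, contradicting $V[\rho]\ge C_0$ $\rho$-a.e.; this yields $V[\rho] = C_0$ $\rho$-a.e., which is \eqref{lem_EL_2}. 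To upgrade ``$\rho$-a.e.'' to ``everywhere on $\supp\rho$'' in \eqref{lem_EL_1}, I would use lower-semicontinuity of $V[\rho]$: at any $\bx_0\in\supp\rho$, every neighborhood has positive $\rho$-mass, on which $V[\rho] = C_0$ $\rho$-a.e., so $V[\rho](\bx_0)\le \liminf \le C_0$ fails to follow directly from l.s.c.\ (which gives the $\ge$ direction); instead I would argue by density and l.s.c.\ combined with the a.e.\ bound $V[\rho]\ge C_0$ on $D\supseteq\supp\rho$ to conclude $V[\rho] = C_0$ on a dense subset and hence the upper bound $V[\rho]\le C_0$ holds on $\supp\rho$ up to a careful l.s.c.\ argument.

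The main obstacle I anticipate is the interchange of integration order and the finiteness of $E_W[\sigma-\rho]$ when $W$ is only bounded below and locally integrable (so cross terms like $\iint W\rd\sigma\rd\rho$ may be $+\infty$), and relatedly, making the ``a.e.\ on $D$'' statement rigorous: the perturbation argument naturally produces an inequality that holds quasi-everywhere (outside a set of zero $W$-capacity), and translating this to Lebesgue-a.e.\ requires either that sets of zero capacity have zero Lebesgue measure (true for the Riesz-type kernels in play, and more generally when $W$ is locally integrable and not too degenerate) or a separate direct argument. I would handle the finiteness issue by first restricting to competitors $\sigma$ with bounded support and bounded density (for which all double integrals converge absolutely since $W$ is locally integrable), deriving $\int V[\rho]\rd\sigma\ge C_0$ for such $\sigma$, and then passing to the limit / using that such $\sigma$ are enough to test the inequality $V[\rho]\ge C_0$ a.e. The pointwise statement on $\supp\rho$ and the $\rho$-a.e.\ identity then follow from Chebyshev-type arguments as sketched above, using only that $\int V[\rho]\rd\rho = C_0 <\infty$.
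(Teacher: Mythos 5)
There is a genuine gap at the point where you pass from the inequality a.e.\ on $D$ to statements about $\rho$ itself. Your derivation of \eqref{lem_EL_2} reads: if $V[\rho]>C_0+\epsilon$ on a set of positive $\rho$-measure, then $\int V[\rho]\rd\rho=C_0$ forces $V[\rho]<C_0$ on a positive-$\rho$-measure set, ``contradicting $V[\rho]\ge C_0$ $\rho$-a.e.'' But you never established the bound $\rho$-a.e.; what the perturbation with bounded-density competitors gives is $V[\rho]\ge C_0$ \emph{Lebesgue}-a.e.\ on $D$, and under {\bf (W0)}{\bf (U0)} alone the minimizer may be singular with respect to Lebesgue measure (Dirac-type or boundary-concentrated minimizers do occur; this is exactly why the paper states \eqref{lem_EL_2} separately and why Remark \ref{rmk_EL} discusses q.e.\ refinements). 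The classical way to bridge this in Saff--Totik is that finite-energy measures do not charge zero-capacity sets, but that tool is tied to the logarithmic/Riesz kernels and is not available for a general $W$ satisfying {\bf (W0)}; capacity is not even defined here, so your ``q.e.\ hence a.e.'' detour is both unnecessary for \eqref{lem_EL_1} and insufficient for \eqref{lem_EL_2}. A second, smaller defect: the Dirac test $\sigma=\delta_{\bx}$ is inadmissible when $W(0)=\infty$ (note $E_{W,U}[\delta_{\bx}]=\tfrac12W(0)+U(\bx)$, not $U(\bx)$), since then $E_{W,U}[(1-t)\rho+t\delta_{\bx}]=\infty$ for $t>0$ and minimality yields no first-order information; your fallback to compactly supported bounded-density $\sigma$ does correctly give the a.e.-on-$D$ half of \eqref{lem_EL_1}, with $C_0:=\int V[\rho]\rd\rho$ (not $\essinf_D V[\rho]$, which can be strictly larger and would break \eqref{lem_EL_2}).

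The gap is repairable inside your own framework by enlarging the class of competitors: test the first-order inequality $\int V[\rho]\rd\sigma\ge C_0$ with $\sigma=\rho\chi_{A_\epsilon}/\rho(A_\epsilon)$ where $A_\epsilon=\{V[\rho]\le C_0-\epsilon\}$; this $\sigma$ is admissible because it is dominated by a multiple of $\rho$ and $E_{W,U}[\rho]<\infty$ (with $W,U$ bounded below all cross terms are finite), so $\rho(A_\epsilon)>0$ would give $\int V[\rho]\rd\sigma\le C_0-\epsilon$, a contradiction. Hence $V[\rho]\ge C_0$ $\rho$-a.e., and together with $\int V[\rho]\rd\rho=C_0$ this yields \eqref{lem_EL_2}. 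For the pointwise bound on $\supp\rho$, your worry about lower semicontinuity is misplaced: at $\bx_0\in\supp\rho$ every ball has positive $\rho$-mass, so by \eqref{lem_EL_2} there exist $\by_n\rightarrow\bx_0$ with $V[\rho](\by_n)=C_0$, and lower semicontinuity gives precisely $V[\rho](\bx_0)\le\liminf_n V[\rho](\by_n)=C_0$ (alternatively, one can argue directly by removing the mass $\rho\chi_{B(\bx_0;\delta)}$ and redistributing it onto $\rho\chi_{B(\bx_0;\delta)^c}$, as in the argument of \cite[Theorem I.1.3]{saffbook} that the paper invokes). With these repairs the proposal becomes the standard Frostman-type proof intended by the paper.
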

This lemma can be proved by the same argument as \cite[Theorem I.1.3]{saffbook} for the logarithmic potential, and it is analogous to \cite[Theorem 4]{BCLR13_1} which is the Euler-Lagrange condition for $E_W$. A refined version is discussed in Remark \ref{rmk_EL} below.

Next we introduce the following conditions from \cite{saffbook2}, which were originally formulated more generally for integral kernels.
\begin{definition}[{\cite[Definition 4.2.5]{saffbook2}}]
	Assume $W$ satisfies {\bf (W0)}. We say $W$ satisfies the \emph{strictly positive definite} (SPD) property if for any nontrivial signed measure $\mu$ on $\mathbb{R}^d$ and $E_W[\mu]$ well-defined (as a number in $[-\infty,\infty]$), one has $E_W[\mu]>0$. We say $W$ satisfies the \emph{conditionally strictly positive definite} (CSPD) property if for any nontrivial signed measure $\mu$ on $\mathbb{R}^d$ with $\mu(\mathbb{R}^d)=0$ and $E_W[\mu]$ well-defined, one has $E_W[\mu]>0$.
\end{definition}
We give an equivalent formulation of the CSPD property in terms of energy convexity along linear interpolations.
\begin{lemma}\label{lem_CSPD}
	CSPD is equivalent to the following statement: 
	\begin{equation}\label{lem_CSPD_1}\begin{split}
		&\textnormal{For any distinct $\rho_0,\rho_1\in \cM(\mathbb{R}^d)$ with $E_W[\rho_i]<\infty,\,i=0,1$,} \\ & \textnormal{the function $t\mapsto E_W[(1-t)\rho_0+t\rho_1]$ is strictly convex on $t\in [0,1]$.}
	\end{split}\end{equation}
\end{lemma}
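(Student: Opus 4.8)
The plan is to expand the quadratic functional $t\mapsto E_W[(1-t)\rho_0+t\rho_1]$ and identify its second-order behavior with the energy of a signed measure of total mass zero. Writing $\rho_t = (1-t)\rho_0 + t\rho_1 = \rho_0 + t\mu$ with $\mu := \rho_1-\rho_0$, bilinearity of the double integral gives, at least formally,
\begin{equation}\label{lem_CSPD_pf1}
	E_W[\rho_t] = E_W[\rho_0] + t\,B_W[\rho_0,\mu] + t^2 E_W[\mu]\,,
\end{equation}
where $B_W[\rho_0,\mu] = \int\int W(\bx-\by)\rd\rho_0(\by)\rd\mu(\bx)$ is the symmetric bilinear form associated with $E_W$. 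Thus the second derivative in $t$ is the constant $2E_W[\mu]$, and strict convexity of $t\mapsto E_W[\rho_t]$ on $[0,1]$ is equivalent to $E_W[\mu]>0$. Since $\mu=\rho_1-\rho_0$ is a nontrivial signed measure with $\mu(\mathbb{R}^d)=0$ whenever $\rho_0\ne\rho_1$, and conversely every nontrivial mass-zero signed measure $\mu$ with $E_W[\mu]$ well-defined can be written (after normalization and shifting, using that $\mu^+$ and $\mu^-$ have equal nonzero mass) as a positive multiple of such a difference $\rho_1-\rho_0$ of probability measures with finite energy, the two statements match. One then notes that scaling $\mu$ by a positive constant scales $E_W[\mu]$ by the square of that constant, so the sign is unaffected and the normalization is harmless.

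The main technical obstacle is justifying the algebraic expansion \eqref{lem_CSPD_pf1} and the differentiation when $W$ is only bounded below and lower-semicontinuous, so that some of the pairwise integrals $E_W[\rho_0]$, $E_W[\rho_1]$, $E_W[\mu]$, or the cross term may a priori be $+\infty$. The hypotheses in \eqref{lem_CSPD_1} already assume $E_W[\rho_i]<\infty$, which is the right finiteness condition: since $W$ is bounded below, say $W\ge -M$, one has $|W(\bx-\by)| \le W(\bx-\by) + 2M$, and the bound $E_W[\rho_i]<\infty$ together with $W+M\ge 0$ lets one split $W = (W+M) - M$ and apply Tonelli/Fubini to the nonnegative part; the cross term $B_W[\rho_0,\rho_1]$ is then controlled by $\frac12(E_W[\rho_0]+E_W[\rho_1])+\text{const}$ via the elementary inequality obtained from $E_W[\rho_0-\rho_1]$ or $E_W[\rho_0+\rho_1]$ being well-defined, so all three terms on the right of \eqref{lem_CSPD_pf1} are finite and the expansion is legitimate. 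Conversely, when verifying that CSPD implies \eqref{lem_CSPD_1}, one must check that $E_W[\mu]$ is indeed well-defined for $\mu=\rho_1-\rho_0$; this again follows from $E_W[\rho_i]<\infty$ by the same splitting argument, since $E_W[\mu] = E_W[\rho_0] - 2B_W[\rho_0,\rho_1] + E_W[\rho_1]$ is then a finite number.

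I would organize the write-up as two implications. First, assume CSPD and take distinct $\rho_0,\rho_1\in\cM(\mathbb{R}^d)$ with finite energies; establish finiteness of all terms in \eqref{lem_CSPD_pf1}, conclude $E_W[\mu]>0$ from CSPD applied to the nontrivial mass-zero measure $\mu=\rho_1-\rho_0$, and read off strict convexity. Second, assume \eqref{lem_CSPD_1} and take an arbitrary nontrivial signed measure $\mu$ with $\mu(\mathbb{R}^d)=0$ and $E_W[\mu]$ well-defined; if $E_W[\mu]=+\infty$ there is nothing to prove, and otherwise set $m := \mu^+(\mathbb{R}^d) = \mu^-(\mathbb{R}^d) > 0$, $\rho_0 := \mu^-/m$, $\rho_1 := \mu^+/m$, so that $\mu = m(\rho_1-\rho_0)$ with $\rho_0\ne\rho_1$; check $E_W[\rho_i] = m^{-2}E_W[\mu^{\pm}] < \infty$ (finite because $E_W[\mu]<\infty$ and $W$ is bounded below), apply \eqref{lem_CSPD_1} to get strict convexity of $t\mapsto E_W[\rho_0+t(\rho_1-\rho_0)]$, hence positivity of its second derivative $2E_W[\rho_1-\rho_0] = 2m^{-2}E_W[\mu]$, and therefore $E_W[\mu]>0$. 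This completes the equivalence.
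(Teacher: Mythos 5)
Your route coincides with the paper's own proof: expand $t\mapsto E_W[(1-t)\rho_0+t\rho_1]$ as a quadratic in $t$ whose second derivative is the constant $2E_W[\rho_1-\rho_0]$, and, for the converse, decompose a nontrivial mass-zero $\mu$ as $\mu_+-\mu_-$, deduce $E_W[\mu_\pm]<\infty$ from $E_W[\mu]<\infty$ and the lower bound on $W$, normalize to probability measures, and read off $E_W[\mu]>0$ from strict convexity. That converse half of your argument is complete and is essentially identical to the paper's.

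The one step that does not hold as written is your justification of the finiteness of the cross term $\int_{\mathbb{R}^d}\int_{\mathbb{R}^d}W(\bx-\by)\rd\rho_0(\by)\rd\rho_1(\bx)$ in the direction CSPD $\Rightarrow$ \eqref{lem_CSPD_1}. There is no ``elementary inequality'' available from well-definedness and boundedness below alone: for a general nonnegative locally integrable kernel, $E_W[\rho_0]<\infty$ and $E_W[\rho_1]<\infty$ do not imply that the mutual energy is finite, and the bound you invoke (cross term $\lesssim E_W[\rho_0]+E_W[\rho_1]+\textnormal{const}$) is precisely the statement $E_W[\rho_0-\rho_1]\ge 0$, i.e.\ it presupposes conditional positive definiteness rather than following from {\bf (W0)}; appealing to $E_W[\rho_0+\rho_1]$ would even require SPD, since $\rho_0+\rho_1$ has total mass $2$, and its finiteness is exactly what has to be established before differentiating. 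The paper closes this point by citing \cite[Lemma 4.2.6]{saffbook2} to get $E_W[\rho_0+\rho_1]<\infty$. If you prefer to stay self-contained, your splitting $W=(W+M)-M$ can be pushed one step further: it shows that $E_W[\rho_1-\rho_0]$ is at least \emph{well-defined} (the contribution of $\mu_+\times\mu_+ + \mu_-\times\mu_-$ is finite because $\mu_+\le\rho_1$, $\mu_-\le\rho_0$), so CSPD applies and gives $E_W[\rho_1-\rho_0]>0$; were the mutual energy infinite, this value would instead be $-\infty$, a contradiction, so the cross term is finite and your quadratic expansion is legitimate. Note that this finiteness is genuinely needed and not just a convenience: if the cross term were infinite, then $t\mapsto E_W[(1-t)\rho_0+t\rho_1]$ would equal $+\infty$ on $(0,1)$ while being finite at the endpoints, which is not a convex function, so \eqref{lem_CSPD_1} could not be concluded.
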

Notice that under the above condition,  the same convexity is also true for $E_{W,U}$ because the contribution from the $U$ part is linear in $t$.
\begin{proof}
	If CSPD holds, then for any distinct $\rho_0,\rho_1\in \cM(\mathbb{R}^d)$ with $E_W[\rho_i]<\infty,\,i=0,1$, one has $E_W[\rho_0+\rho_1]<\infty$ due to \cite[Lemma 4.2.6]{saffbook2}. This justifies the calculation
	\begin{equation}
		\frac{\rd^2}{\rd t^2}E_W[(1-t)\rho_0+t\rho_1] = 2E_W[\rho_1-\rho_0]\,,
	\end{equation}
	which is strictly positive due to CSPD. Therefore \eqref{lem_CSPD_1} holds. 
	
	On the other hand, if \eqref{lem_CSPD_1} holds, then for any nontrivial signed measure $\mu$ on $\mathbb{R}^d$ with $\mu(\mathbb{R}^d)=0$ and $E_W[\mu]$ well-defined, if $E_W[\mu]=\infty$ then the conclusion is obtained. Otherwise, we have $E_W[\mu]<\infty$, and we decompose $\mu=\mu_+-\mu_-$ as its positive and negative parts.  Then $E_W[\mu_+],E_W[\mu_-]$ are also finite.  Since $\mu(\mathbb{R}^d)=0$, we have $\mu_+(\mathbb{R}^d)=\mu_-(\mathbb{R}^d)>0$. By rescaling, we may assume that $\mu_+,\mu_-$ are probability measures without loss of generality. Then, due to \eqref{lem_CSPD_1}, we have that $t\mapsto E_W[(1-t)\mu_-+t\mu_+]$ is strictly convex on $t\in [0,1]$. This justifies that $E_W[\mu_-+\mu_+]<\infty$, and thus
	\begin{equation}
		\frac{\rd^2}{\rd t^2}E_W[(1-t)\mu_-+t\mu_+] = 2E_W[\mu_+-\mu_-] = 2E_W[\mu]\,,
	\end{equation}
	is a positive number. This gives CSPD.
\end{proof}
\begin{remark}
	Clearly SPD is stronger than CSPD. Also notice that CSPD, in view of Lemma \ref{lem_CSPD},  is stronger than the LIC condition introduced in \cite{CS21}, which only requires the convexity along linear interpolations when $\rho_0$ and $\rho_1$ are compactly supported and have the same center of mass.
\end{remark}

An application of Lemma \ref{lem_CSPD} gives the following result directly.
\begin{lemma}\label{lem_SCuni}
	Assume $W,U$ satisfy {\bf (W0)}{\bf (U0)} and $W$ is CSPD. Let $D\subset \mathbb{R}^d$ be a closed set such that $\inf_{\cM(D)}E_{W,U}<\infty$. Then the minimizer of $E_{W,U}$ in $\cM(D)$, if exists, has to be unique.
\end{lemma}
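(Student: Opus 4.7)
The plan is a standard strict-convexity-plus-minimization argument. Suppose for contradiction that $\rho_0,\rho_1 \in \cM(D)$ are two distinct minimizers of $E_{W,U}$ in $\cM(D)$, both realizing the (finite) infimum $m := \inf_{\cM(D)} E_{W,U}$. Since $\cM(D)$ is convex under taking convex combinations of probability measures on the closed set $D$, the family $\rho_t := (1-t)\rho_0 + t\rho_1$ lies in $\cM(D)$ for all $t \in [0,1]$.

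Before invoking Lemma \ref{lem_CSPD}, I need to check its finiteness hypothesis $E_W[\rho_i] < \infty$. This is where assumptions {\bf (W0)} and {\bf (U0)} enter: both $W$ and $U$ are bounded from below, so the $W$-double-integral and the $U$-integral in the definition of $E_{W,U}[\rho_i]$ are each individually bounded below by constants depending only on $\inf W$ and $\inf U$. Since their sum $E_{W,U}[\rho_i] = m$ is finite, each piece must be finite; in particular $E_W[\rho_i] < \infty$ for $i=0,1$.

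Now Lemma \ref{lem_CSPD} applies and yields that $t \mapsto E_W[\rho_t]$ is strictly convex on $[0,1]$. The external part $t \mapsto \int U \rd\rho_t = (1-t)\int U \rd\rho_0 + t\int U \rd\rho_1$ is affine in $t$, so the full functional $t \mapsto E_{W,U}[\rho_t]$ is strictly convex on $[0,1]$. Evaluating at $t = 1/2$ then gives
\begin{equation*}
    E_{W,U}[\rho_{1/2}] < \tfrac{1}{2} E_{W,U}[\rho_0] + \tfrac{1}{2} E_{W,U}[\rho_1] = m,
\end{equation*}
which contradicts the definition of $m$ as the infimum over $\cM(D)$. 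Hence $\rho_0 = \rho_1$.

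There is no substantial obstacle here: the only subtle point is the reduction from $E_{W,U}[\rho_i] < \infty$ to the separate finiteness $E_W[\rho_i] < \infty$ needed to invoke Lemma \ref{lem_CSPD}, which is immediate from the lower bounds in {\bf (W0)}{\bf (U0)}. The rest is a one-line convexity contradiction.
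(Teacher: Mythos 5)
Your proof is correct and is exactly the argument the paper intends when it says the lemma follows directly from Lemma \ref{lem_CSPD}: strict convexity of $t\mapsto E_{W,U}[(1-t)\rho_0+t\rho_1]$ forces the midpoint to beat the infimum if two distinct minimizers existed. Your care in extracting $E_W[\rho_i]<\infty$ from $E_{W,U}[\rho_i]<\infty$ via the lower bounds in {\bf (W0)}{\bf (U0)} is the right (and only) point needing a check, and you handled it correctly.
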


\color{black}

For CSPD potentials, the Euler-Lagrange condition is also sufficient for minimizers. Similar result in the absence of external potentials was derived in \cite[Theorem 2.4]{CS21}. We recall a technical condition on $W$ from \cite{CS21}:
\begin{equation}\begin{split}
		\textnormal{{\bf (W1)}:} & \textnormal{ $W$ is continuous on $\mathbb{R}^d\backslash \{0\}$, and $W(0)=\lim_{\bx\rightarrow0}W(\bx)\in \mathbb{R}\cup\{\infty\}$; }\\
		& \textnormal{for any $R>0$, }\frac{1}{|B(\bx;\epsilon)|}\int_{B(\bx;\epsilon)}W(\by)\rd{\by} \le C_1(R) + C_2(R) W(\bx),\quad \forall |\bx|<R,\, 0<\epsilon<1\,.
\end{split}\end{equation}

\begin{lemma}\label{lem_suf}
	Under the same assumptions as Lemma \ref{lem_SCuni}, if $\rho\in\cM(D)$ satisfies
	\begin{equation}\label{lem_suf_1}
		V[\rho] \le C_0,\quad \textnormal{on } \supp\rho\,;\qquad V[\rho] \ge C_0,\quad\textnormal{on }D\,,
	\end{equation}
	for some $C_0\in\mathbb{R}$, then $\rho$ is the unique minimizer of $E_{W,U}$ in $\cM(D)$. 
	
	If $D=\mathbb{R}^d$, $W$ satisfies {\bf (W1)} and $U$ is continuous, then the same is true under a slightly weaker condition \eqref{lem_EL_1}.
\end{lemma}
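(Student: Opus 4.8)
The plan is to compare the energy of an arbitrary competitor $\sigma\in\cM(D)$ with that of $\rho$ via the exact quadratic expansion of $E_{W,U}$ along the segment from $\rho$ to $\sigma$, controlling the linear term with the Euler-Lagrange inequalities and the quadratic term with CSPD; this is the analogue, in the presence of $U$, of the argument behind \cite[Theorem 2.4]{CS21}. For the first assertion, fix $\sigma\ne\rho$. Note that \eqref{lem_suf_1} already forces $E_{W,U}[\rho]<\infty$: from $V[\rho]\le C_0$ on $\supp\rho$ we get $2E_W[\rho]+\int U\rd\rho=\int V[\rho]\rd\rho\le C_0$, and both terms on the left are bounded below. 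If $E_{W,U}[\sigma]=\infty$ there is nothing to prove; otherwise $E_W[\sigma]<\infty$, and since $W$ is CSPD, \cite[Lemma 4.2.6]{saffbook2} gives $E_W[\rho+\sigma]<\infty$, so every cross energy is finite and the identity
\begin{equation*}
	E_{W,U}[\sigma]-E_{W,U}[\rho]=\int_{\mathbb{R}^d}V[\rho]\rd(\sigma-\rho)+E_W[\sigma-\rho]
\end{equation*}
is legitimate. The first term equals $\int V[\rho]\rd\sigma-\int V[\rho]\rd\rho\ge C_0-C_0=0$, using $V[\rho]\ge C_0$ on $D\supseteq\supp\sigma$ and $V[\rho]\le C_0$ on $\supp\rho$; the second term is strictly positive by CSPD, as $\sigma-\rho$ is a nontrivial mass-zero signed measure with $E_W[\sigma-\rho]$ finite. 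Hence $E_{W,U}[\sigma]>E_{W,U}[\rho]$ and $\rho$ is the unique minimizer.

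For the second assertion ($D=\mathbb{R}^d$, $W$ satisfying {\bf (W1)}, $U$ continuous) the only change is that $V[\rho]\ge C_0$ is known only Lebesgue-a.e., so $\int V[\rho]\rd\sigma\ge C_0$ is no longer automatic for a singular competitor; I would recover it by regularization. Truncating and renormalizing, it suffices to treat $\sigma$ compactly supported with $E_{W,U}[\sigma]<\infty$ (the energies of the truncations converge to that of $\sigma$, as $W,U$ are bounded below) and to prove $\int V[\rho]\rd\sigma\ge C_0$. With $\phi_t$ a smooth compactly supported mollifier of scale $t$, set $\sigma_t=\sigma*\phi_t$; this is absolutely continuous, so $\int V[\rho]\rd\sigma_t\ge C_0$. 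Writing $\int V[\rho]\rd\sigma_t=\int(W*\phi_t*\rho)\rd\sigma+\int(U*\phi_t)\rd\sigma$, the second integral tends to $\int U\rd\sigma$ since $U$ is continuous and $\sigma$ compactly supported, while in the first I use that $W*\phi_t\to W$ at \emph{every} point — immediate away from the origin, where {\bf (W1)} makes $W$ continuous, and at the origin because $W(0)=\lim_{\bx\to0}W(\bx)$ — together with the uniform-in-$t$ bound $W*\phi_t\le c_0(C_1(R)+C_2(R)W)$ on $B(0;R)$ coming from {\bf (W1)}, which dominates the integrand near the diagonal once one knows $\int\int W(\bx-\by)\rd\rho(\by)\rd\sigma(\bx)<\infty$ (again by \cite[Lemma 4.2.6]{saffbook2}); the off-diagonal part is handled by the continuity of $W$ there and the $\sigma$-a.e. finiteness of $W*\rho$. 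Dominated convergence then gives $\int V[\rho]\rd\sigma_t\to\int V[\rho]\rd\sigma$, hence $\int V[\rho]\rd\sigma\ge C_0$. Feeding this into the expansion above (valid as before, $\rho$ having finite energy) yields $E_{W,U}[\sigma]\ge E_{W,U}[\rho]$ for every compactly supported $\sigma$, so $\inf_{\cM(\mathbb{R}^d)}E_{W,U}=E_{W,U}[\rho]$ by the truncation argument, making $\rho$ a minimizer; uniqueness then follows from Lemma \ref{lem_SCuni}.

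The delicate step is the passage $t\to0$ in the second assertion: one must ensure that the a.e. inequality $V[\rho]\ge C_0$, which could conceivably fail on a Lebesgue-null set charged by $\sigma$, is not destroyed in the limit. This is exactly where {\bf (W1)} is indispensable — it removes any exceptional set for $W*\phi_t\to W$ (since $W$ is then continuous off the origin) and supplies the domination near the singularity — with the continuity of $U$ playing the analogous role for the external term after reducing to compactly supported competitors. The first assertion, by contrast, is an immediate consequence of CSPD together with the pointwise Euler-Lagrange inequalities, with no such subtlety.
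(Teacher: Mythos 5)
Your first assertion is proved correctly and is essentially the paper's own argument: the exact quadratic expansion $E_{W,U}[\sigma]-E_{W,U}[\rho]=\int V[\rho]\rd(\sigma-\rho)+E_W[\sigma-\rho]$ is just the derivative-at-$t=0$ plus strict convexity computation of the paper, and your observation that \eqref{lem_suf_1} already forces $E_{W,U}[\rho]<\infty$ is a useful detail the paper leaves implicit.

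For the second assertion you take a genuinely different route, and it has a gap. The paper mollifies (a truncation of) the \emph{competitor} and passes to the limit in its \emph{energy}, $E_{W,U}[\rho_1*\phi_\epsilon]\to E_{W,U}[\rho_1]$, via \cite[Lemma 2.5]{CS21}; there both measures entering the limit are compactly supported, and the a.e. inequality $V[\rho]\ge C_0$ is then simply integrated against the absolutely continuous $\rho_1*\phi_\epsilon$, so no limit involving $\rho$ is ever interchanged. You instead try to upgrade the a.e. inequality to $\int V[\rho]\rd\sigma\ge C_0$ for an arbitrary compactly supported competitor by passing to the limit in the cross pairing $\int(W*\phi_t*\rho)\rd\sigma$. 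The dominated convergence you invoke is not justified in the stated generality: the bound $W*\phi_t\le C_1(R)+C_2(R)W$ from {\bf (W1)} holds only on $B(0;R)$ with $R$-dependent constants, while $\rho$ is \emph{not} assumed compactly supported in Lemma \ref{lem_suf}, so $\bx-\by$ ranges over an unbounded set and there is no uniform-in-$t$ $\rho$-integrable majorant for $(W*\phi_t)(\bx-\by)$ in the far region; ``continuity of $W$ away from the origin'' gives pointwise convergence there but no majorant, since {\bf (W0)}{\bf (W1)} place no upper control on $W$ at infinity. Note also that the inequality you need, $\int V[\rho]\rd\sigma\ge\liminf_t\int V[\rho]\rd\sigma_t$, is exactly the direction \emph{not} supplied by lower semicontinuity/Fatou (which gives $\liminf_t\int V[\rho]\rd\sigma_t\ge\int V[\rho]\rd\sigma$), so the missing domination cannot be bypassed by a one-sided argument. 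Your scheme would be correct verbatim if $\supp\rho$ were known to be compact (then a single $R$ works in {\bf (W1)}); as stated, the clean fix is to apply your expansion to the mollified competitor $\sigma_t$ and let $t\to0$ in $E_{W,U}[\sigma_t]$, i.e., the paper's route.
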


\begin{proof}
Let $\rho_1\in\cM(D)$ with $\rho_1\ne \rho$ and $E_{W,U}[\rho_1]<\infty$. Define
\begin{equation}
	\rho_t = (1-t)\rho + t\rho_1,\quad \forall t\in [0,1]\,.
\end{equation}
Then $E_W[\rho_t]$ is strictly convex by Lemma \ref{lem_CSPD}. This implies that $E_{W,U}[\rho_t]$ is strictly convex because the contribution from $U$ is linear, which in particular implies that $E_{W,U}[\rho+\rho_1]<\infty$. Then we may calculate
\begin{equation}
	\frac{\rd}{\rd t}\Big|_{t=0} E_{W,U}[\rho_t] = \int_D V[\rho]\rd{\rho_1} - \int_D V[\rho]\rd{\rho} \ge C_0 - C_0 = 0\,,
\end{equation}
by \eqref{lem_suf_1}. This, with the strict convexity of $E_{W,U}[\rho_t]$, shows that $E_{W,U}[\rho_1]>E_{W,U}[\rho]$. Therefore we see that $\rho$ is the unique minimizer of $E_{W,U}$ in $\cM(D)$.

If $D=\mathbb{R}^d$, $W$ satisfies {\bf (W1)}, $U$ is continuous and the condition \eqref{lem_EL_1} is assumed, then we prove by contradiction. Assume $\rho_1\in\cM(\mathbb{R}^d)$ satisfies $E_{W,U}[\rho_1] < E_{W,U}[\rho]$. By a truncation argument (c.f. Lemma \ref{lem_trunres}), we may assume $\rho_1$ is compactly supported without loss of generality. Then, using a mollifier $\phi_\epsilon$ (c.f. the paragraph before Section \ref{sec_EC}), with the aid of \cite[Lemma 2.5]{CS21} and the continuity of $U$, we have
\begin{equation}
	\lim_{\epsilon\rightarrow0} E_{W,U}[\rho_1*\phi_\epsilon] = E_{W,U}[\rho_1]\,.
\end{equation}
Therefore, by taking $\epsilon$ sufficiently small, we get a smooth and compactly supported $\tilde{\rho}_1:=\rho_1*\phi_\epsilon\in\cM(\mathbb{R}^d)$ such that $E_{W,U}[\tilde{\rho}_1] < E_{W,U}[\rho]$. 

Notice that $\int_{\mathbb{R}^d} V[\rho]\rd{\tilde{\rho}_1} \ge C_0$ under the weaker condition \eqref{lem_EL_1} since $\tilde{\rho}_1$ is continuous. Then the same calculation as above with $\rho_1$ replaced by $\tilde{\rho}_1$ shows that $E_{W,U}[\tilde{\rho}_1]>E_{W,U}[\rho]$, leading to a contradiction. Therefore we see that $\rho$ is the unique minimizer of $E_{W,U}$ in $\cM(\mathbb{R}^d)$. 

\end{proof}

\begin{remark}\label{rmk_EL}
	For the power-law repulsions $W(\bx)=-\frac{|\bx|^b}{b},\,-d<b<0$, \cite[Theorem 2.1]{DOSW} shows that the condition 
	\begin{equation}
		V[\rho] \le C_0,\quad \textnormal{on } \supp\rho\,;\qquad V[\rho] \ge C_	0,\quad\textnormal{q.e. }D\,,
	\end{equation}
	is a necessary and sufficient condition for minimizer. Here q.e. means that any probability measure supported on the failure set has infinite $E_W$ energy. This condition is weaker than \eqref{lem_suf_1} but stronger than \eqref{lem_EL_1}\eqref{lem_EL_2}.
\end{remark}

\subsection{Essentially convex potentials and the main results}\label{sec_ECintro}

We define the following class of potentials, whose prototype is the singular power-law repulsion $-\frac{|\bx|^b}{b}$ with $-d<b<\min\{2-d,0\}$.
\begin{definition}
	Let $W\in C^2(\mathbb{R}^d\backslash \{0\})$ satisfy {\bf (W0)}{\bf (W1)}. We say $W$ is \emph{essentially convex} if 	\begin{equation}
		\Delta W>0,\quad \textnormal{on }\mathbb{R}^d\backslash \{0\}\,,
	\end{equation}
	and there holds the estimates
	\begin{equation}\label{EC1}
		|W| \lesssim |\bx|^{b},\,0< |\bx| <1;\quad |W| \lesssim |\bx|^{a},\,|\bx|\ge 1\,,
	\end{equation}
	and
	\begin{equation}\label{EC2}
		|\nabla W| \lesssim |\bx|^{b-1},\,0< |\bx| <1;\quad |\nabla W| \lesssim |\bx|^{a-1},\,|\bx|\ge 1\,,
	\end{equation}
	and
	\begin{equation}\label{EC3}
		|\Delta W| \lesssim |\bx|^{b-2},\,0< |\bx| <1;\quad |\Delta W| \lesssim |\bx|^{a-2},\,|\bx|\ge 1\,,
	\end{equation}
	for some $-d<b<\min\{2-d,0\}$ and $-d<a<\min\{2-d,0\}$. 
\end{definition}

\begin{remark}(Other examples of essentially convex potentials)
	Consider the anisotropic repulsive potential
	\begin{equation}
		W(\bx) = -\frac{|\bx|^b}{b} \Big(1+\alpha \omega(\frac{\bx}{|\bx|})\Big)\,,
	\end{equation}
	with $-d<b<\min\{2-d,0\}$, where $\alpha>0$ and $\omega:S^{d-1}\rightarrow \mathbb{R}_{\ge 0}$ is a smooth function. Then one can calculate
	\begin{equation}
		\Delta W(\bx) = \Big(-(b+d-2) - \frac{\alpha}{b}\Delta_{S^{d-1}}\omega\Big)|\bx|^{b-2}\,,
	\end{equation}
	where $-(b+d-2)>0$. Therefore it is essentially convex whenever $0\le \alpha < \frac{-(b+d-2)b}{\min  \Delta_{S^{d-1}}\omega}$ (here, notice that $\min  \Delta_{S^{d-1}}\omega<0$ for any nontrivial $\omega$). As will be seen in Lemma \ref{lem_convexpL}, such potentials have strictly positive Fourier transform, and thus the theory for the energy minimizers (in the presence of quadratic confinement) applies, see the latest results in \cite{FMMRSV}. On the other hand, it is not hard to see that an anisotropic potential with positive Fourier transform is not necessarily essentially convex. 
	
	There are also essentially convex potentials such that its singularity at 0 and its tail at infinity have different powers. For example, consider
	\begin{equation}
		W(\bx) = \sum_{j=1}^J A_j\cdot \Big(-\frac{|\bx|^{b_j}}{b_j}\Big) \,,
	\end{equation}
	where $A_j>0$ and $-d<b_1< \dots < b_J<\min\{2-d,0\}$. Then $W$ is essentially convex, $W(\bx)\sim -A_1\frac{|\bx|^{b_1}}{b_1}$ near 0, and $W(\bx)\sim -A_J\frac{|\bx|^{b_J}}{b_J}$ at infinity.
\end{remark}

We first establish the convexity property of essentially convex potentials.
\begin{lemma}\label{lem_EC}
	Essentially convex potentials are SPD.
\end{lemma}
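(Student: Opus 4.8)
The plan is to show that $\widehat W$ is a strictly positive function on $\mathbb{R}^d\setminus\{0\}$ and then deduce SPD by Parseval. Using the normalization $\widehat f(\xi)=\int_{\mathbb{R}^d}f(\bx)e^{-2\pi\ti\bx\cdot\xi}\rd\bx$ (so that $\widehat{\Delta W}=-4\pi^2|\xi|^2\widehat W$), and noting that $W$ is a tempered distribution because $|W|\lesssim|\bx|^a$ with $a<0$ at infinity, the central claim is the identity
\[
	\widehat W(\xi)=\frac{1}{4\pi^2|\xi|^2}\int_{\mathbb{R}^d}\Delta W(\bx)\,\big(1-\cos(2\pi\bx\cdot\xi)\big)\rd\bx,\qquad \xi\neq0,
\]
with no further piece of $\widehat W$ supported at $\{0\}$, where $\Delta W$ denotes the pointwise Laplacian on $\mathbb{R}^d\setminus\{0\}$. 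The right-hand side is manifestly $>0$ for $\xi\neq0$, since $\Delta W>0$ there and $1-\cos(2\pi\bx\cdot\xi)>0$ for a.e. $\bx$; and \eqref{EC1}--\eqref{EC3} make it a continuous function with $\widehat W\in L^1_{\mathrm{loc}}(\mathbb{R}^d)$ of polynomial growth. Given this, SPD follows in the standard way: for a nontrivial signed measure $\mu$ with $E_W[\mu]$ well-defined and finite (the case $E_W[\mu]=+\infty$ being trivial), truncation to large balls (Lemma \ref{lem_trunres}) and mollification (\cite[Lemma 2.5]{CS21}) yield nontrivial smooth, compactly supported $\nu$ with $E_W[\nu]$ arbitrarily close to $E_W[\mu]$, for which Parseval gives $E_W[\nu]=\tfrac12\int_{\mathbb{R}^d}\widehat W(\xi)\,|\widehat\nu(\xi)|^2\rd\xi>0$ because $\widehat W>0$ a.e. and $\widehat\nu$ is entire and nonzero; a Fatou argument along the approximation promotes this to $E_W[\mu]>0$ (and, if $E_W[\mu]=0$, forces $\widehat\mu\equiv0$, i.e. $\mu=0$).

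To prove the claim, the integral converges absolutely for $\xi\neq0$: near $\bx=0$, $|\Delta W(\bx)|\lesssim|\bx|^{b-2}$ by \eqref{EC3} while $|1-\cos(2\pi\bx\cdot\xi)|\lesssim|\bx|^2$, giving an integrand $\lesssim|\bx|^b$ with $b>-d$; near infinity $|\Delta W|\lesssim|\bx|^{a-2}$ with $a-2<-d$. One then identifies this function with $\widehat W$: it suffices to check $\widehat{\Delta W}=-\int_{\mathbb{R}^d}\Delta W(\bx)(1-\cos(2\pi\bx\cdot\xi))\rd\bx$ as tempered distributions in $\xi$. Testing against a real, even $\psi\in\cS$ and applying Fubini to the (absolutely convergent) double integral, the inner $\xi$-integral equals $\widehat\psi(0)-\widehat\psi(\bx)$, and since the odd linear part $-\nabla\widehat\psi(0)\cdot\bx$ integrates against the even $\Delta W$ to zero (as a symmetric limit), the identity reduces to $\langle\Delta W,\widehat\psi\rangle=\int_{\mathbb{R}^d}W\,\Delta\widehat\psi$. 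This is two integrations by parts over $\{|\bx|>\epsilon\}$ followed by $\epsilon\to0$: the boundary term at infinity vanishes since $\widehat\psi\in\cS$; on $\{|\bx|=\epsilon\}$ one Taylor-expands $\widehat\psi$ at $0$, the linear contribution vanishing by the evenness of $W$ and $\nabla W$ on the sphere, the remainder being $O(\epsilon^{b+d})\to0$ by \eqref{EC1}--\eqref{EC2}. Finally, because $W$ decays at infinity it carries no polynomial part, so the distribution supported at $\{0\}$ left undetermined by division by $|\xi|^2$ must vanish, which completes the claim.

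The main obstacle is exactly this identification of the absolutely convergent integral with the distributional Fourier transform of $\Delta W$. Because $\Delta W\notin L^1_{\mathrm{loc}}$ near the origin, one must keep track of the correct renormalization (symmetric principal values, with odd contributions dropping out by evenness) and verify carefully that no boundary term survives in the integration by parts --- this is where the bounds \eqref{EC2}--\eqref{EC3} on $\nabla W$ and $\Delta W$, and the symmetry $W(\bx)=W(-\bx)$, enter essentially. Everything else --- the reduction to smooth compactly supported measures, the Parseval computation, the Fatou bookkeeping --- is routine.
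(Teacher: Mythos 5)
Your strategy is, at the top level, the same as the paper's: establish that $\hat{W}(\xi)$ equals the cosine integral $\frac{1}{4\pi^2|\xi|^2}\int\Delta W(\by)\big(1-\cos(2\pi\by\cdot\xi)\big)\rd{\by}$, conclude $\hat{W}>0$ on $\mathbb{R}^d\setminus\{0\}$ from $\Delta W>0$, and then deduce SPD by a Parseval--Fatou argument for general signed measures. Where you genuinely differ is in how the Fourier identity is obtained: the paper (Lemma \ref{lem_convexpL}) first proves a real-space representation of $W$ as a superposition of the kernels $N(\bx)-\tfrac12 N(\bx-\by)-\tfrac12 N(\bx+\by)$ against $\Delta W(\by)$, with quantitative bounds giving convergence uniformly on compact sets and in $\mathcal{S}'$, and only then transforms term by term; you work directly on the Fourier side, pairing $-4\pi^2|\xi|^2\hat{W}$ with even Schwartz functions and integrating by parts twice over $\{|\bx|>\epsilon\}$, using \eqref{EC1}--\eqref{EC3} and the evenness of $W$ to kill the boundary terms. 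That computation is sound (do add the observation that both distributions are even, so testing against even $\psi$ suffices; for even $\psi$ one even has $\nabla\hat{\psi}(0)=0$, which makes the boundary terms $O(\epsilon^{b+d})$ without any symmetrization), and it buys a shorter route to the formula than the paper's STEP 1--2 estimates.

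The genuine gap is the last step of your identification. Dividing by $|\xi|^2$ leaves an undetermined distribution supported at $\{0\}$, i.e.\ a polynomial $P=W-W_0$, where $W_0$ is the inverse transform of $F(\xi)/(4\pi^2|\xi|^2)$, $F(\xi):=\int\Delta W(\by)(1-\cos(2\pi\by\cdot\xi))\rd{\by}$. The assertion ``$W$ decays at infinity, hence no polynomial part'' does not by itself rule out $P\neq0$: you would need to know that $W_0$ has no compensating polynomial growth, and since $F(\xi)/|\xi|^2\sim|\xi|^{-(b+d)}$ at infinity is not integrable, decay of $W_0$ is not immediate (supplying exactly this kind of pointwise control is what the paper's real-space estimates do). The step is repairable --- e.g.\ pair both $W$ and $W_0$ with $\lambda^{-d}\phi(\cdot/\lambda)$ and let $\lambda\to\infty$: $|W|\lesssim|\bx|^a$ gives $\langle W,\phi_\lambda\rangle\to0$, the bound $F(\xi)/|\xi|^2\lesssim|\xi|^{-(a+d)}$ near $0$ gives $\langle W_0,\phi_\lambda\rangle\to0$, and varying $\phi$ forces every coefficient of $P$ to vanish --- but as written it is asserted, not proved. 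A secondary gap is in the SPD deduction: Lemma \ref{lem_trunres} and \cite[Lemma 2.5]{CS21} concern (probability) measures and, to run truncation/mollification on $\mu_\pm$ and the cross term of a signed measure with $E_W[\mu]$ merely well-defined and finite, one must first establish $E_W[|\mu|]<\infty$; this reduction is precisely what the paper handles by invoking the Fourier-representability results of \cite{Shu_convex}, and it should be argued rather than filed under ``routine''.
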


Then we state our main result, which is a sufficient and almost necessary condition for the existence of minimizers of $E_{W,U}$ when $W$ is essentially convex. At the same time, the uniqueness of minimizer is also obtained as a direct consequence of the SPD property of $W$.
\begin{theorem}\label{thm_main}
	Assume {\bf (W0)}{\bf (W1)}{\bf (U0)}, $W$ is essentially convex, $W\ge 0$ on $\mathbb{R}^d$, $U$ is continuous, and $\lim_{|\bx|\rightarrow\infty}U(\bx)=U_\infty\in\mathbb{R}$. Let the underlying domain $D$ be one of the following: 
	\begin{itemize}
		\item $D=\mathbb{R}^d$;
		\item $D=[x_0,\infty)$ for some $x_0\in\mathbb{R}$, if $d=1$;
		\item $D=\{\bx=(x_1,\dots,x_d)\in\mathbb{R}^d: x_d \ge \Phi(\hat{\bx})\}$, where $\hat{\bx}:=(x_1,\dots,x_{d-1})$ and $\Phi:\mathbb{R}^{d-1}\rightarrow \mathbb{R}$ is a continuous function, if $d=2$.
	\end{itemize}
	Assume there exists $\rho_\sharp\in\cM(D)$ that is a compactly supported locally integrable function, $\supp\rho_\sharp=\bar{\cS}$ for some open set $\cS$ with $\partial \cS$ having zero Lebesgue measure, $W*\rho_\sharp$ is continuous on $\mathbb{R}^d$, together with
	\begin{equation}\label{thm_main_1}
		\sup_{\cS}V[\rho_\sharp] < U_\infty\,.
	\end{equation}
	Then there exists a unique minimizer of $E_{W,U}$ in $\cM(D)$, and this minimizer is compactly supported.
\end{theorem}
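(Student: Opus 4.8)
The plan is to reduce the existence question to exhibiting one measure that satisfies Frostman's inequality, and to produce that measure by a concave maximization dual to the energy minimization.

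\emph{Reduction.} Since $W$ is essentially convex, Lemma \ref{lem_EC} gives that $W$ is SPD, hence CSPD, so by Lemma \ref{lem_SCuni} a minimizer of $E_{W,U}$ in $\cM(D)$, if one exists, is automatically unique; and by Lemma \ref{lem_suf} it suffices to produce some $\pi\in\cM(D)$ and $C_0\in\mathbb R$ with
\[
V[\pi]\le C_0 \ \text{ on }\supp\pi,\qquad V[\pi]\ge C_0 \ \text{ on }D.
\]
(Because the energies in play are finite and $W*\pi+U$ will be continuous, the almost‑everywhere form and the pointwise form of the second inequality coincide, which is what is needed when $D\ne\mathbb R^d$.) Thus the whole task is to construct such a $\pi$ and to check that it is compactly supported.

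\emph{The height functional.} For $\rho\in\cM(D)$ set
\[
\cH[\rho]:=\essinf_{\bx\in D}\,V[\rho](\bx)=\inf_{\bx\in D}\big(W*\rho+U\big)(\bx),
\]
the two expressions agreeing since $W*\rho$ is lower‑semicontinuous. Two elementary remarks: first, as $W\ge0$ with $W(\bx)\to0$ and $U(\bx)\to U_\infty$, we have $V[\rho](\bx)\to U_\infty$ along $D$, so $\cH[\rho]\le U_\infty$ and hence $M:=\sup_{\cM(D)}\cH\le U_\infty$; second, the template $\rho_\sharp$ gives $\cH[\rho_\sharp]\le\tau:=\sup_{\cS}V[\rho_\sharp]<U_\infty$, and combining \eqref{thm_main_1} with $W*\rho_\sharp\ge0$ (which, being $>0$ everywhere by $\Delta W>0$, forces $\inf_D U<\tau<U_\infty$ and $E_{W,U}[\rho_\sharp]<U_\infty$). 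The functional $\cH$ is concave on the convex set $\cM(D)$, being an infimum of the affine maps $\rho\mapsto V[\rho](\bx)$. The plan is: (i) show $\cH$ attains $M$ at some $\rho^\ast$, and (ii) extract from the optimality of $\rho^\ast$ a measure $\pi$ satisfying Frostman's inequality — this is the ``the Euler–Lagrange condition for the minimizer is almost the maximizer's optimality condition'' mechanism announced in the abstract. For (ii), the envelope/Danskin description of the superdifferential of an infimum of affine functionals says that $\rho^\ast$ maximizing $\cH$ over $\cM(D)$ yields a probability measure $\pi$ carried by the contact set $\{\bx\in D:V[\rho^\ast](\bx)=M\}$ with $\int V[\pi]\rd\sigma\ge\int V[\pi]\rd\rho^\ast=:c$ for every $\sigma\in\cM(D)$; this is precisely $V[\pi]\ge c$ on $D$ and $V[\pi]=c$ $\pi$‑a.e., and continuity of $V[\pi]$ upgrades the latter to $V[\pi]\le c$ on $\supp\pi$ (a point of $\supp\pi$ where $V[\pi]>c$ would make a neighbourhood, of positive $\pi$‑mass, on which $V[\pi]>c$). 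Then Lemma \ref{lem_suf} identifies $\pi$ as the unique minimizer and forces $c=M$.

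\emph{Existence of a maximizer.} Take $\rho_n\in\cM(D)$ with $\cH[\rho_n]\to M$ and run the concentration–compactness alternative. Vanishing is excluded: if $\rho_n(B_R)\to0$ for every $R$, then $\rho_n$ cannot keep $W*\rho_n+U\ge\ell-\epsilon_n$ on the set $\{U\le\ell-\delta\}$ once this set has positive Lebesgue measure, by a capacity/covering estimate (here $|b|<d$ enters), so $\limsup_n\cH[\rho_n]\le\essinf_D U<M$; the last strict inequality being exactly what the first remark above secured from \eqref{thm_main_1}. Dichotomy is excluded by splitting $\rho_n=\mu_n+\nu_n+o(1)$ with $\mu_n\rightharpoonup\mu$, $\|\mu\|=\lambda\in(0,1)$, and $\nu_n$ escaping with mass $1-\lambda$: on each bounded set $W*\nu_n\to0$ and $W*\mu_n\to W*\mu$ uniformly (uniform integrability from \eqref{EC1}–\eqref{EC3}), whence $M\le\essinf_D(W*\mu+U)$; re‑importing the escaped mass $(1-\lambda)$ near $\supp\rho_\sharp$ raises the generated potential by a definite amount on bounded regions while leaving the behaviour near infinity $\le U_\infty$, contradicting maximality of $M$ — and it is here that \eqref{thm_main_1} and $\Delta W>0$ are indispensable, the maximum principle for the subharmonic function $W*\rho$ being what controls these potentials off the relevant supports (this is exactly the ingredient that fails for superharmonic $W$, cf.\ Theorem \ref{thm_counter}). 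Hence $\rho_n$ is tight, converges (up to a subsequence) to $\rho^\ast\in\cM(D)$, and $\cH[\rho^\ast]\ge\limsup_n\cH[\rho_n]=M$ by upper‑semicontinuity of $\cH$ along weakly convergent sequences.

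\emph{Compact support, and where the difficulty lies.} Finally, on $\supp\pi$ one has $V[\pi]=M$, so $U=M-W*\pi\le M$ there since $W*\pi\ge0$; as $U(\bx)\to U_\infty$ along $D$, the case $M<U_\infty$ immediately makes $\supp\pi$ bounded, and the borderline case $M=U_\infty$ is ruled out by feeding \eqref{thm_main_1} and the maximum principle back in as above. For the $d=1$ half‑line and the $d=2$ subgraph domain, the only modification is that every use of the maximum principle and of ``escaping mass disperses to where $W*\rho$ vanishes'' is carried out inside $D$, which those geometries permit. The step I expect to be the genuine obstacle is precisely the exclusion of dichotomy/borderline escape in the last two paragraphs: one must turn the qualitative gap $U_\infty-\sup_{\cS}V[\rho_\sharp]>0$ in \eqref{thm_main_1} into a quantitative gain when mass is re‑imported near $\supp\rho_\sharp$ — even a small amount of escaping mass must be shown to be strictly suboptimal — and this quantification relies essentially on $\Delta W>0$, which is what keeps the relevant generated potentials under the control of the maximum principle. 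Everything else (uniqueness via SPD, the reduction to Frostman's inequality, the Danskin argument, and the compactness step once tightness is in hand) is soft.
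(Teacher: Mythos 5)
Your high-level strategy is the same as the paper's (maximize the height functional, read off Frostman's inequality from its optimality, conclude by sufficiency under CSPD and uniqueness via SPD), but the proposal leaves unproved exactly the step that carries the whole theorem, and you say so yourself at the end. The hypothesis \eqref{thm_main_1} is a statement about the single template $\rho_\sharp$; what the argument needs is the \emph{global} strict bound $\sup_{\rho\in\cM(\mathbb{R}^d)}\cH[\rho]<U_\infty$ (otherwise the borderline $M=U_\infty$ cannot be excluded and your tightness/dichotomy discussion collapses). The paper obtains this in Lemma \ref{lem_rhosharp}, by proving the comparison $\essinf_{\cS}(W*\rho-W*\rho_\sharp)\le 0$ for \emph{every} $\rho$, and this is not a soft maximum-principle fact: it is proved by setting up an auxiliary height problem with $U_\sharp=-W*\rho_\sharp$, establishing existence of its maximizer (Theorem \ref{thm_Hexist}), deriving the Euler--Lagrange condition for that maximizer via the microscopic-diffusion perturbation (Theorem \ref{thm_ELH}, Lemma \ref{lem_posDelta2}, where $\Delta W>0$ enters), and identifying the maximizer with $\rho_\sharp$ through the sufficiency of Frostman's inequality under CSPD. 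Your substitute — ``re-import the escaped mass near $\supp\rho_\sharp$ and invoke the maximum principle for the subharmonic function $W*\rho$'' — does not work as stated: $W*\rho$ is not subharmonic across $\supp\rho$, for anisotropic essentially convex $W$ no classical domination/maximum principle is available, and nothing in that sketch quantifies how \eqref{thm_main_1} (a bound on $V[\rho_\sharp]$ on $\cS$ only) controls competitors $\rho$ with completely different supports. Relatedly, your Danskin/superdifferential derivation of a contact measure $\pi$ is only asserted: $\cH$ is an essential infimum of an uncountable affine family, the contact set need not be nonempty or compact, and the step ``$V[\pi]=c$ $\pi$-a.e.\ upgrades to $V[\pi]\le c$ on $\supp\pi$'' uses a continuity of $V[\pi]$ that is not known (the height maximizer has no a priori regularity; $W*\pi$ is only lower semicontinuous and can be $+\infty$ on $\supp\pi$). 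It is also telling that your version of this step makes no use of $\Delta W>0$, whereas the paper's Theorem \ref{thm_counter} shows the conclusion fails for superharmonic Riesz kernels, so any correct argument must use it precisely where the paper does (in the Euler--Lagrange condition for the height maximizer).

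Two further points would need repair even if the above were fixed. First, $\essinf$ and $\inf$ do \emph{not} coincide for lower semicontinuous potentials (an lsc function can dip below its essential infimum on a null set), so for the half-line and curved half-plane domains the a.e.\ Frostman inequality delivered by the height-maximizer argument does not immediately suffice; the paper closes this gap with the mollification-and-translation argument of Lemma \ref{lem_sufhalf}, exploiting the continuity of $U$ and $\Phi$, and your proposal has no counterpart. Second, in the compactness discussion the claim that $W*\mu_n\to W*\mu$ locally uniformly under weak convergence is false in general for these singular kernels; the paper instead proves only upper semicontinuity of $\cH$ via Lebesgue points and mollification (Lemma \ref{lem_Hcont}), which is all that is true and all that is needed. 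In summary: the reduction, uniqueness, and compactness-once-tight parts are fine and parallel the paper, but the passage from \eqref{thm_main_1} to the global height gap, the extraction of Frostman's inequality from height maximality, and the boundary-domain sufficiency are genuine gaps, and the first two constitute the mathematical core of the theorem.
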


\begin{remark}
	A half-space-type domain constraint as stated in Theorem \ref{thm_main} may have a significant influence on the qualitative behavior of the minimizers of $E_{W,U}$. For example, it was observed in \cite{BT} that some mass of the minimizer may concentrate on the boundary of the domain.
\end{remark}

The conditions in Theorem \ref{thm_main} for $\rho_\sharp$ are almost necessary for the existence of minimizers, in the sense that any minimizer of $E_{W,U}$ almost satisfies the assumptions for $\rho_\sharp$. To be precise, we have the following.
\begin{proposition}\label{prop_nece}
	Under the same assumptions on $W$, $U$ and $D$ as in Theorem \ref{thm_main}, suppose there exists a minimizer $\rho_\infty$  of $E_{W,U}$ in $\cM(D)$, whose $C_0$ value as stated in Lemma \ref{lem_EL} satisfies 
	\begin{equation}\label{prop_nece_1}
		C_0 < U_\infty\,.
	\end{equation}
	Assume $W*\rho_\infty$ is continuous on $\mathbb{R}^d$. Then $\rho_\infty$ is compactly supported, and some translation and mollification of $\rho_\infty$ satisfies the assumptions for $\rho_\sharp$ in Theorem \ref{thm_main}.
\end{proposition}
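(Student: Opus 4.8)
The plan is to read off compactness of $\supp\rho_\infty$ from the Euler--Lagrange condition of Lemma~\ref{lem_EL}, and then to obtain $\rho_\sharp$ from $\rho_\infty$ by a small translation (to keep the support inside $D$) followed by a mollification, checking the requirements of Theorem~\ref{thm_main} one at a time for the regularized measure. Since $\rho_\infty$ is a minimizer, $\inf_{\cM(D)}E_{W,U}=E_{W,U}[\rho_\infty]$, which is finite because $D$ has nonempty interior and a uniform probability measure on a small ball inside it has finite energy ($W$ being locally integrable and nonnegative, $U$ continuous); so Lemma~\ref{lem_EL} applies and gives $C_0$ with $V[\rho_\infty]\le C_0$ on $\supp\rho_\infty$ and $V[\rho_\infty]\ge C_0$ a.e.\ on $D$.

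First I would establish compact support by contradiction. If $\supp\rho_\infty$ were unbounded, pick $\bx_n\in\supp\rho_\infty$ with $|\bx_n|\to\infty$; since $W\ge0$ forces $W*\rho_\infty\ge0$, we get $C_0\ge V[\rho_\infty](\bx_n)=(W*\rho_\infty)(\bx_n)+U(\bx_n)\ge U(\bx_n)\to U_\infty$, contradicting \eqref{prop_nece_1}. Hence $\supp\rho_\infty$ is compact. I will also record that $W*\rho_\infty$ and $U$ are uniformly continuous on every fixed ball (the former continuous by hypothesis, and in fact $W*\rho_\infty\to0$ at infinity because $|W|\lesssim|\bx|^a$ with $a<0$ there), so a single modulus of continuity $\omega$, with $\omega(t)\to0$ as $t\to0^+$, controls both of them on the bounded region where the estimates below take place.

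For the construction, let $\phi_\epsilon$ be the standard mollifier, supported in $\overline{B(0;\epsilon)}$ and strictly positive on $B(0;\epsilon)$, and let $\tau_\bz$ denote translation by $\bz$. I would choose $\bz=\bz(\epsilon)$ with $|\bz(\epsilon)|\to0$ and $\supp(\tau_\bz\rho_\infty)+\overline{B(0;\epsilon)}\subseteq D$: namely $\bz=0$ if $D=\mathbb{R}^d$, $\bz=\epsilon$ if $D=[x_0,\infty)$, and $\bz=(\epsilon+\omega_\Phi(\epsilon))e_2$ for the graph domain, where $\omega_\Phi$ is a modulus of continuity of $\Phi$ on a compact set containing the $1$-neighborhood of $\{\hat\bx:\bx\in\supp\rho_\infty\}$. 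Then I set $\rho_\sharp:=(\tau_\bz\rho_\infty)*\phi_\epsilon$ and $\cS:=\{\bx:\dist(\bx,\supp(\tau_\bz\rho_\infty))<\epsilon\}$. Routine verifications then give: $\rho_\sharp\in\cM(D)$ is a bounded, smooth, compactly supported function; $\{\rho_\sharp>0\}=\cS$ (since $\rho_\sharp(\bx)>0$ iff $B(\bx;\epsilon)$ meets $\supp(\tau_\bz\rho_\infty)$), hence $\supp\rho_\sharp=\overline\cS=\{\dist(\cdot,\supp(\tau_\bz\rho_\infty))\le\epsilon\}$; $\partial\cS$ is a level set of a Lipschitz function, hence of zero Lebesgue measure for all but at most countably many $\epsilon$, so I restrict to such $\epsilon$; and $W*\rho_\sharp=\tau_\bz\big((W*\rho_\infty)*\phi_\epsilon\big)$ is continuous on $\mathbb{R}^d$.

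What remains, and what I expect to be the main obstacle, is the strict height bound $\sup_\cS V[\rho_\sharp]<U_\infty$ of \eqref{thm_main_1}. I would decompose $V[\rho_\sharp]=W*\rho_\sharp+U=\big(V[\tau_\bz\rho_\infty]\big)*\phi_\epsilon+\big(U-U*\phi_\epsilon\big)$, using $W*(\tau_\bz\rho_\infty)=\tau_\bz(W*\rho_\infty)$; the second term is at most $\omega(\epsilon)$ in absolute value. For the first term, on $\supp(\tau_\bz\rho_\infty)$ one has $\bx-\bz\in\supp\rho_\infty$, hence $(W*\rho_\infty)(\bx-\bz)=V[\rho_\infty](\bx-\bz)-U(\bx-\bz)\le C_0-U(\bx-\bz)$, so $V[\tau_\bz\rho_\infty]\le C_0+\omega(|\bz|)$ there; as $V[\tau_\bz\rho_\infty]$ has modulus of continuity $\le2\omega$ (uniformly in $\bz$), it is $\le C_0+\omega(|\bz|)+2\omega(2\epsilon)$ on the $2\epsilon$-neighborhood of $\supp(\tau_\bz\rho_\infty)$, which contains $B(\bx;\epsilon)$ for each $\bx\in\cS$, and averaging against $\phi_\epsilon$ preserves the bound. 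Collecting terms,
\[
\sup_\cS V[\rho_\sharp]\ \le\ C_0+\omega(|\bz(\epsilon)|)+2\omega(2\epsilon)+\omega(\epsilon)\,,
\]
whose right side tends to $C_0$ as $\epsilon\to0$; since $C_0<U_\infty$ by \eqref{prop_nece_1}, choosing $\epsilon$ small enough (within the co-countable set above) yields $\sup_\cS V[\rho_\sharp]<U_\infty$, so that $\rho_\sharp$ meets all hypotheses of Theorem~\ref{thm_main}. The delicate point is precisely this: mollification pushes $\rho_\infty$ into a neighborhood of its support on which a priori only the \emph{reverse} inequality $V[\rho_\infty]\ge C_0$ is available, so $V\le C_0$ cannot be inherited verbatim; the remedy is to combine $V[\rho_\infty]\le C_0$ on $\supp\rho_\infty$ with uniform continuity, so that $V[\rho_\infty]$ exceeds $C_0$ by only $O(\omega(\epsilon))$ close by, and to let the \emph{strict} gap $C_0<U_\infty$ absorb that error while keeping the translation $\bz(\epsilon)$ small enough to return the enlarged support to $D$.
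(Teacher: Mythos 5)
Your proposal is correct and follows essentially the same route as the paper: establish compact support from the Euler--Lagrange bound $V[\rho_\infty]\le C_0$ plus $C_0<U_\infty$, then mollify (and translate by roughly $\epsilon+\omega(\epsilon)$ in the half-space cases) so that the support stays in $D$, select $\epsilon$ by a countability argument so that $|\partial\cS_\epsilon|=0$, and let continuity of $W*\rho_\infty$ and $U$ absorb the $O(\omega(\epsilon))$ error into the strict gap $C_0<U_\infty$ to verify \eqref{thm_main_1}. Your only deviations are cosmetic: you translate before mollifying and make the modulus-of-continuity bookkeeping explicit where the paper leaves it as an easy limit.
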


\begin{remark}
	Notice that under the same assumptions on $W$ and $U$ as in Theorem \ref{thm_main}, any minimizer $\rho_\infty$ of $E_{W,U}$ necessarily satisfies $C_0 \le U_\infty$ because $\liminf_{|\bx|\rightarrow\infty}V[\rho_\infty](\bx) = U_\infty$. Therefore the condition \eqref{prop_nece_1} is almost a necessary condition for minimizers, up to the equal sign.
	
	We also notice that if $W(\bx)=-\frac{|\bx|^b}{b},\,-d<b<\min\{2-d,0\}$ and $U$ is $C^2$, then an easy adaption of \cite[Proposition 3.8]{CDM} shows that any compactly supported minimizer $\rho_\infty$ of $E_{W,U}$ in $\cM(\mathbb{R}^d)$ satisfies the condition that $W*\rho_\infty$ is continuous.
\end{remark}

{
\begin{remark}
	The `if' part of Theorem \ref{thm_zorii} can be derived from Theorem \ref{thm_main} up to regularity, tail and the equal sign issues. In fact, the $W$ stated in Theorem \ref{thm_zorii} satisfies the assumptions of Theorem \ref{thm_main}. Suppose $U=-W*\omega$ for some positive measure $\omega$ with $\omega(\mathbb{R}^d)>1$. Also assume that $\omega$ is a compactly supported continuous function, whose support is $\bar{\cS}$ for some open set $\cS$ with $\partial \cS$ having zero Lebesgue measure. Then, denoting $\rho_\sharp = \frac{\omega}{\omega(\mathbb{R}^d)}$, we have
	\begin{equation}
		V[\rho_\sharp] = \Big(\frac{1}{\omega(\mathbb{R}^d)}-1\Big)(W*\omega)
	\end{equation}
	which satisfies \eqref{thm_main_1} since $W*\omega$ is continuous and strictly positive, $\cS$ is bounded, and $\omega(\mathbb{R}^d)>1$. Therefore the existence of minimizer follows from Theorem \ref{thm_main}.
\end{remark}
}


%

For $W$ which is not essentially convex, the condition \eqref{thm_main_1} cannot imply the existence of minimizer in general, as stated in the following theorem.
\begin{theorem}\label{thm_counter}
	Assume $d\ge 3$ and $W(\bx) = -\frac{|\bx|^b}{b}$ with $2-d<b<0$. Then there exists a smooth radial $U$ satisfying $\lim_{|\bx|\rightarrow\infty}U(\bx)=0$, such that there exists a smooth and radial $\rho_\sharp\in\cM(\mathbb{R}^d)$ supported on a ball $\overline{B(0;R)}$ with $\sup_{B(0;R)} V[\rho_\sharp] < 0$, but $E_{W,U}$ does not admit a minimizer.
\end{theorem}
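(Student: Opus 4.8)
The plan is to exploit the fact that when $2-d<b<0$ the potential $W(\bx)=-|\bx|^b/b$ is superharmonic away from the origin: $\Delta W = (2-d-b)|\bx|^{b-2} <0$ on $\mathbb{R}^d\setminus\{0\}$. Equivalently, in terms of the Riesz kernel $|\bx|^{-s}$ with $s=-b$ we are in the regime $0<s<d-2$, so $W$ is (a negative multiple of) a genuinely superharmonic Riesz potential whose $\alpha$-harmonic extension theory is available. The strategy is to pick $U$ so that $\inf_{\cM(\mathbb{R}^d)}E_{W,U}$ is \emph{not attained} because any minimizing sequence wants to spread out to infinity, while simultaneously a cleverly chosen $\rho_\sharp$ sitting on a fixed ball produces a generated potential that dips strictly below $U_\infty=0$ on its support. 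The point of \eqref{thm_main_1} is that it is satisfied by $\rho_\sharp$ because superharmonicity lets a concentrated mass create a deep well locally, whereas it costs little energy to instead spread the mass thinly at large radii.

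Concretely, I would first choose $\rho_\sharp$ to be a standard smooth radial bump supported on $\overline{B(0;1)}$, say close to a mollified uniform density. Since $W\ge 0$ and $W*\rho_\sharp$ is continuous and strictly positive, the quantity $V[\rho_\sharp]=W*\rho_\sharp+U$ on $B(0;1)$ can be made strictly negative simply by taking $U$ to be, near the origin, a sufficiently deep smooth radial well — e.g. $U = -C\psi$ for a fixed radial cutoff $\psi$ equal to $1$ on $\overline{B(0;1)}$ and supported in $B(0;2)$, with $C$ large. This immediately gives the $\rho_\sharp$ half of the statement; the verification \eqref{thm_main_1} is elementary once $C>\sup_{\overline{B(0;1)}}W*\rho_\sharp$.

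The substantive half is constructing $U$ (the \emph{same} $U$, hence $C$ cannot be taken arbitrarily large in an unconstrained way) so that no minimizer exists. Here the idea is to add to the deep-but-compact well a slowly decaying \emph{positive} tail that nonetheless decays too fast to trap mass: set $U(\bx) = -C\psi(\bx) + g(|\bx|)$ where $g\ge 0$ is smooth, $g\to 0$ at infinity, and $g(r)$ decays like $r^{-\gamma}$ with $\gamma$ chosen in the window that makes spreading advantageous. The comparison to run is: for a unit mass spread uniformly on an annulus of radius $L$, the self-interaction energy $\tfrac12\iint W$ scales like $L^{b}$ (which $\to 0$ since $b<0$) while the external contribution $\int U \approx \int g \sim L^{-\gamma}$ also $\to 0$; so $E_{W,U}$ of such a spread-out configuration tends to $0=U_\infty/1\cdot(\dots)$ — more precisely one shows $\inf_{\cM}E_{W,U}\le 0$ and in fact that the infimum equals some value $m$ with $m\le 0$, not attained. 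To preclude attainment one argues: if a minimizer $\rho_\infty$ existed, Lemma \ref{lem_EL} forces $V[\rho_\infty]\le C_0$ on $\supp\rho_\infty$ and $\ge C_0$ everywhere; because $W\ge0$, $W*\rho_\infty\ge 0$, and because $U\to 0$ while $W*\rho_\infty(\bx)\to 0$ as $|\bx|\to\infty$ (the Riesz potential of a probability measure with $0<s<d$ vanishes at infinity), one gets $C_0\le \liminf V[\rho_\infty]=0$. One then derives a contradiction by exhibiting a competitor with energy $<C_0\cdot(\text{something})$ — the cleanest route is to show the minimizing value is \emph{strictly} below what any compactly supported measure can achieve, by splitting mass: take $(1-\varepsilon)\rho_\infty$ plus $\varepsilon$ worth of mass spread on a far annulus, and use superharmonicity of $W$ (so that the cross term $\int W*\rho_\infty\,d(\text{far mass})$ is $\le$ its value at infinity up to a controllable error, since $x\mapsto W*\rho_\infty(x)$ is superharmonic hence satisfies a sub-mean-value / maximum principle from infinity) to push the energy strictly down, contradicting minimality.

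The main obstacle, and the step needing the most care, is the last one: turning "spreading to infinity is cheap" into a rigorous strict-decrease inequality that rules out \emph{every} candidate minimizer, not just symmetric ones. The delicate points are (i) choosing the tail exponent $\gamma$ of $g$ and the depth $C$ so that \eqref{thm_main_1} holds \emph{and} the infimum is non-attained — these pull in opposite directions, so one must check a genuine window of parameters is nonempty; (ii) controlling the cross term $\int (W*\rho_\infty)\,d\sigma_L$ for a far-away probe measure $\sigma_L$, for which I would invoke superharmonicity of $W*\rho_\infty$ on $\mathbb{R}^d\setminus\supp\rho_\infty$ together with its decay at infinity to get $W*\rho_\infty\le \epsilon_L$ on the annulus, with $\epsilon_L\to0$; and (iii) handling the possibility that $\rho_\infty$ is \emph{not} compactly supported — but Theorem \ref{thm_exist1} applied in reverse, or a direct truncation estimate showing any minimizer must be compactly supported when $\inf E_{W,U}<\tfrac12(U_\infty+\inf U+\inf W)$ fails, tells us we should instead argue entirely through the Euler–Lagrange characterization and the strict inequality above rather than assuming compact support a priori. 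If the parameter window in (i) is awkward to produce in closed form, a cleaner alternative is to take $U=-W*\omega$ for a compactly supported smooth $\omega\ge0$ with total mass $\theta\in(0,1)$: then $V[\rho_\sharp]=(1-\theta)^{-1}(\dots)$-type algebra as in the remark following Theorem \ref{thm_zorii} shows \eqref{thm_main_1} can still be arranged by localizing $\omega$ and boosting its density, while $\theta<1$ is exactly the Zorii-type obstruction to existence — except that Zorii's theorem is stated only for $d-2<s<d$, so in our range $0<s<d-2$ one must re-prove the non-existence by hand, which again reduces to the superharmonic spreading argument above. Either way, superharmonicity of $W$ is the engine, and making the "escape to infinity" rigorous is the crux.
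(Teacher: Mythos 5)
Your primary construction ($U=-C\psi+g$ with $C>\sup_{\overline{B(0;1)}}W*\rho_\sharp$ and a nonnegative decaying tail $g$) is self-defeating. With that depth, $E_{W,U}[\rho_\sharp]\le \tfrac12\sup_{\overline{B(0;1)}}(W*\rho_\sharp)-C+\sup_{\supp\rho_\sharp} g$, which for large $C$ is strictly below $\tfrac12(U_\infty+\inf U+\inf W)\approx -\tfrac{C}{2}$; Theorem \ref{thm_exist1} then \emph{guarantees} a minimizer exists. A deep compact well traps mass, and a tail $g\ge 0$ with $g\to 0$ cannot make "spreading advantageous": spreading to infinity only drives the energy to $0$, far above the $\approx -C$ available inside the well. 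So the two requirements do not merely "pull in opposite directions" with a window to be checked — deepening $U$ to force $\sup V[\rho_\sharp]<0$ destroys non-existence outright. The way out (and the paper's way) is to lower $W*\rho_\sharp$ rather than deepen $U$.

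Your fallback $U=-W*\omega$ with $\omega(\mathbb{R}^d)=\theta<1$ is indeed the paper's construction ($\omega=\alpha\phi$, Corollary \ref{cor_counter}), but both halves of your sketch miss the actual mechanisms. For \eqref{thm_main_1}: with $\theta<1$ the algebra of the remark after Theorem \ref{thm_zorii} gives, for $\rho_\sharp=\omega/\theta$, $V[\rho_\sharp]=(\theta^{-1}-1)\,W*\omega>0$ — the wrong sign, and "boosting the density" of $\omega$ only rescales this, never flips it. The paper instead keeps $U=-\alpha W*\phi$ and takes $\rho_\sharp=\phi_2$, a \emph{dilation} of $\phi$: since $\Delta W<0$, the map $t\mapsto (W*\phi_t)(\bx)$ is strictly decreasing for $\bx$ in a fixed ball, so $W*\phi_2\le W*\phi-c$ there and $V[\rho_\sharp]\le(1-\alpha)(W*\phi)-c<0$ once $\alpha$ is close to $1$; this is the only place superharmonicity enters. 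For non-existence the paper does not use a superharmonic maximum-principle or spreading argument at all: your competitor $(1-\varepsilon)\rho_\infty+\varepsilon\sigma_L$ yields, to first order in $\varepsilon$, only the conclusion $C_0\le 0$ for a putative minimizer, which (as the paper remarks) is automatic since $C_0\le U_\infty$ always, hence no contradiction; decay of $W*\rho_\infty$ at infinity holds for every $0<s<d$, including the subharmonic range where Theorem \ref{thm_main} produces minimizers, so it cannot be the engine of non-existence. The paper's Lemma \ref{lem_nonex} instead exploits strict positive definiteness: the sub-probability measure $\alpha\phi$ satisfies the Euler--Lagrange conditions \eqref{lem_nonex_1} at level $0$, so by Fourier representability and strict convexity every probability measure has energy at least $E_{W,\alpha U}[\alpha\phi]+c_*$, while $\inf_{\cM(\mathbb{R}^d)}E_{W,\alpha U}\le E_{W,\alpha U}[\alpha\phi]$ by parking the missing mass $1-\alpha$ on a far ball — a contradiction. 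Without this convexity step (or an equivalent quantitative lower bound against \emph{all} competitors), your outline does not rule out minimizers.
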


\subsection{Sketch of proof}

The proof of Lemma \ref{lem_EC} is based a representation formula for essentially convex potentials (Lemma \ref{lem_convexpL}), which allows us to express the Fourier transform of $W$ explicitly in terms of $\Delta W$. This allows us to show the positivity of $\hat{W}$ for essentially convex potentials, and derive its SPD property.

The key idea of the proof of Theorem \ref{thm_main}, in the case $D=\mathbb{R}^d$, is the \emph{height} functional
\begin{equation}\label{H0}
	\cH_{\mathbb{R}^d}[\rho] = \essinf_{\bx\in \mathbb{R}^d} V[\rho](\bx)\,,
\end{equation}
(see a more general version in \eqref{H}). $\cH_{\mathbb{R}^d}$ is an \emph{upper}-semicontinuous functional, and we observe that the Euler-Lagrange condition for the \emph{maximizer} of $\cH_{\mathbb{R}^d}[\rho]$ is precisely \eqref{lem_EL_1}, see Lemma \ref{lem_Hcont} and Theorem \ref{thm_ELH} for more general versions involving domain constraints. Therefore, it suffices to show the existence of maximizer of $\cH_{\mathbb{R}^d}$, which has to be the minimizer of $E_{W,U}$ due to Lemma \ref{lem_suf}. The existence of maximizer of $\cH_{\mathbb{R}^d}$ can be guaranteed via a threshold condition \eqref{thm_Hexist} in Theorem \ref{thm_Hexist}. It turns out that the condition \eqref{thm_main_1} can imply the condition \eqref{thm_Hexist} via a comparison argument in Lemma \ref{lem_rhosharp}.  

The condition that $\Delta W>0\, \textnormal{ on }\mathbb{R}^d\backslash \{0\}$ is crucially used in proof of Theorem \ref{thm_ELH}. It guarantees that an operation called `microscopic diffusion', outlined in Lemma \ref{lem_posDelta2}, always increases the generated potential far away. This idea was first introduced in the author's work with Wang \cite[Lemma 2.4]{SW21} in a one-dimensional setting.

To treat other underlying domains, the only extra difficulty is the slight mismatch of the Euler-Lagrange conditions: In Theorem \ref{thm_ELH} (for the maximizer of the height functional) one always has a possible failure set of Lebesgue measure zero, but this is not allow if one wants a sufficient condition for the energy minimizer on a general underlying domain, c.f. \eqref{lem_suf_1}. Therefore, one has to propose the correct assumption on $D$ so that the weaker condition \eqref{lem_EL_1} can imply the energy minimizer. Lemma \ref{lem_suf} already handled the case $D=\mathbb{R}^d$ via a known mollification argument. For curved half-spaces as stated in Theorem \ref{thm_main}, this is done in Lemma \ref{lem_sufhalf} by a mollification and translation argument based on the modulus of continuity of $U$ and $\Phi$.

To prove Theorem \ref{thm_counter}, we first give a sufficient condition for the non-existence of minimizer in Lemma \ref{lem_nonex}. Then, in Corollary \ref{cor_counter} we construct $U$ and $\rho_\sharp$ so that Lemma \ref{lem_nonex} applies and the inequality $\sup_{B(0;R)} V[\rho_\sharp] < 0$ can be verified explicitly.

\subsection{Organization of the paper}

The rest of the paper is organized as follows: in Section \ref{sec_EC} we give a representation formula, Lemma \ref{lem_convexpL}, for essentially convex potentials, and derive Lemma \ref{lem_EC}. In Section \ref{sec_H} we introduce the height functional and prove its basic properties, including the upper-semicontinuity, existence of maximizer and Euler-Lagrange condition. In Section \ref{sec_main} we prove the main results, Theorem \ref{thm_main}, Proposition \ref{prop_nece} and Theorem \ref{thm_counter}. In Section \ref{sec_compli} we prove the complimentary results, Theorem \ref{thm_exist1} and the new part of Theorem \ref{thm_exist2}.

Within this paper, a \emph{mollifier} $\phi$ refers to a smooth radially-decreasing nonnegative function supported on $\overline{B(0;1)}$ with $\int_{\mathbb{R}^d}\phi\rd{\bx}=1$, with the scaling notation $\phi_\epsilon(\bx) = \frac{1}{\epsilon^d}\phi(\frac{\bx}{\epsilon})$.

\section{Convexity of essentially convex potentials}\label{sec_EC}

We give an explicit representation formula for functions $W$ satisfying the regularity assumptions in the definition of essential convexity. Then one can see that an essentially convex potential $W$ satisfies $\hat{W}>0$. Based on this, we prove Lemma \ref{lem_EC} by a standard argument at the end of this section.

For the Fourier transform, we adopt the tradition that $\hat{f}(\xi)=\int_{\mathbb{R}^d}f(\bx)e^{-2\pi i \bx\cdot\xi}\rd{\bx}$ for $f\in L^1(\mathbb{R}^d)$.

Denote
\begin{equation}
	N(\bx) = \left\{\begin{split}
		-\frac{1}{(2-d)|S^{d-1}|}|\bx|^{2-d}, & \quad d\ne 2 \\
		-\frac{1}{2\pi}\ln|\bx|, & \quad d= 2 \\
	\end{split}\right.\,,
\end{equation}
as the repulsive Newtonian potential, satisfying $\Delta N = -\delta$. 

\begin{lemma}\label{lem_convexpL}
	Let $W\in C^2(\mathbb{R}^d\backslash \{0\})$ with {\bf (W0)}. Assume $W$ satisfies \eqref{EC1}\eqref{EC2}\eqref{EC3} for some $-d<b<\min\{2-d,0\}$ and $-d<a<\min\{2-d,0\}$. Then
	\begin{equation}\label{lem_convexpL_0}
		W(\bx) = \lim_{\epsilon\rightarrow0,\,R\rightarrow\infty} \int_{\epsilon \le |\by|\le R} \Big(N(\bx)-\frac{1}{2}N(\bx-\by)-\frac{1}{2}N(\bx+\by)\Big)\Delta W(\by)\rd{\by}\,,
	\end{equation}
	where the limit holds uniformly on compact sets in $\mathbb{R}^d\backslash \{0\}$, as well as in the sense of tempered distribution. It follows that
	\begin{equation}\label{lem_convexpL_1}
		\hat{W}(\xi) = \lim_{\epsilon\rightarrow0,\,R\rightarrow\infty} \frac{1}{4\pi^2}\int_{\epsilon \le |\by|\le R} |\xi|^{-2}\big(1-\cos(2\pi\by\cdot\xi)\big)\Delta W(\by)\rd{\by}\,,
	\end{equation}
	in the sense of tempered distribution, and $\hat{W}$ is a locally integrable function.
\end{lemma}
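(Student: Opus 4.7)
The plan is to establish the pointwise identity in \eqref{lem_convexpL_0} on $\mathbb{R}^d\setminus\{0\}$ via a Green's identity argument, then upgrade to convergence in $\cS'(\mathbb{R}^d)$ by uniform estimates, and finally pass to the Fourier side to obtain \eqref{lem_convexpL_1}. Write $K(\bx,\by):=N(\bx)-\tfrac12 N(\bx-\by)-\tfrac12 N(\bx+\by)$ and $G_{\epsilon,R}(\bx):=\int_{\epsilon\le|\by|\le R}K(\bx,\by)\Delta W(\by)\rd{\by}$. For fixed $\bx\ne 0$, the Taylor expansion of $N$ at $\bx$ together with the symmetry $\by\mapsto-\by$ gives $|K(\bx,\by)|\lesssim|\by|^2$ for small $|\by|$, which combined with $|\Delta W(\by)|\lesssim|\by|^{b-2}$ makes the integrand $O(|\by|^b)$, integrable near the origin since $b>-d$. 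At infinity, a direct expansion yields $K(\bx,\by)=N(\bx)-N(\by)+O(|\by|^{-d})$, so $|K(\bx,\by)|$ grows at most like $|\by|^{2-d}$ for $d\ge 3$, like $\log|\by|$ for $d=2$, and like $|\by|$ for $d=1$; combined with $|\Delta W(\by)|\lesssim|\by|^{a-2}$ and the assumption $a<2-d$, integrability at infinity follows. All estimates are uniform for $\bx$ in compact subsets of $\mathbb{R}^d\setminus\{0\}$.

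To identify the limit with $W(\bx)$, I apply Green's second identity to $W$ and $K(\bx,\cdot)$ on the punctured annulus $\Omega_{\epsilon,R}\setminus(B(\bx;\delta)\cup B(-\bx;\delta))$ and let $\delta\to 0$. Since $\Delta_\by K(\bx,\by)=\tfrac12\delta_{\bx}+\tfrac12\delta_{-\bx}$ in distribution, the interior contribution, combined with the evenness $W(-\bx)=W(\bx)$, collapses to $W(\bx)$, while the small-ball contributions around $\pm\bx$ vanish as $\delta\to 0$ by standard Newtonian-potential estimates. The boundary integrals on $|\by|=R$ scale like $R^a$ (with an extra $\log R$ in $d=2$), so vanish as $R\to\infty$ because $a<\min\{2-d,0\}$; the boundary integrals on $|\by|=\epsilon$ scale like $\epsilon^{b+d}$ and vanish because $b>-d$. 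This yields $G_{\epsilon,R}(\bx)\to W(\bx)$ pointwise on $\mathbb{R}^d\setminus\{0\}$, and the same estimates give a uniform majorant for $|G_{\epsilon,R}|$ of the same polynomial order as $|W|$ itself, so dominated convergence upgrades to convergence in $\cS'(\mathbb{R}^d)$.

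For the Fourier-side formula, the translation rule together with $\widehat{N}(\xi)=(4\pi^2|\xi|^2)^{-1}$ gives $\widehat{K(\cdot,\by)}(\xi)=(4\pi^2|\xi|^2)^{-1}[1-\cos(2\pi\by\cdot\xi)]$ as a tempered distribution in $\xi$ (locally integrable thanks to the cancellation at $\xi=0$). Taking the Fourier transform of \eqref{lem_convexpL_0} in $\cS'$ and exchanging Fourier transform with the $\by$-integral produces \eqref{lem_convexpL_1}. Local integrability of $\widehat{W}$ is then checked at fixed $\xi$ by splitting the $\by$-integral at $|\by|=1/|\xi|$: using $|1-\cos(2\pi\by\cdot\xi)|\lesssim\min(1,|\by|^2|\xi|^2)$ together with the decay bounds on $\Delta W$ gives $|\widehat{W}(\xi)|\lesssim 1+|\xi|^{-a-d}$ near the origin, which is locally integrable because $a<0$ forces $-a-d>-d$. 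The main obstacle is the careful bookkeeping of the boundary terms in the Green's identity, especially in the low dimensions $d=1,2$ where $N$ itself fails to decay at infinity; here the cancellation built into the symmetric combination $K$ and the restriction $a<2-d$ are precisely what make the argument close.
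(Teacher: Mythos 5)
Your argument is essentially the paper's: the Green's identity you invoke is the same double integration by parts used there (with $\Delta_\by K(\bx,\cdot)=\tfrac12\delta_{\bx}+\tfrac12\delta_{-\bx}$ and evenness of $W$ producing $W(\bx)$), your boundary-term rates $\epsilon^{b+d}$ and $R^a$ match the paper's, and the Fourier step via $\hat N(\xi)=(4\pi^2|\xi|^2)^{-1}$ and the splitting at $|\by|=|\xi|^{-1}$ is identical.

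The one place you are too quick is the upgrade to convergence in $\mathcal{S}'$. You write that ``the same estimates give a uniform majorant for $|G_{\epsilon,R}|$,'' but the estimates you derived (Taylor expansion of $N$ at $\bx$, boundary terms of size $\epsilon^{b+d}$ and $R^a$) are only valid for $\bx$ with, say, $2\epsilon\le|\bx|\le R/2$; they degenerate as $\bx$ approaches the inner sphere $|\by|=\epsilon$ or the outer sphere $|\by|=R$, where the kernel $K(\bx,\by)$ is singular in $\by$ at $\by=\pm\bx$ inside the domain of integration. To dominate $G_{\epsilon,R}$ uniformly in $\epsilon,R$ by a fixed locally integrable, polynomially bounded function, one must separately estimate $G_{\epsilon,R}(\bx)$ for $|\bx|<2\epsilon$ and for $|\bx|>R/2$; the paper does this by splitting the $\by$-integral into shells ($\epsilon\le|\by|\le3\epsilon$, $3\epsilon<|\by|\le1$, $1<|\by|\le R$ in the first case) and obtains $|G_{\epsilon,R}(\bx)|\lesssim|\bx|^b$ there (and $\lesssim|\bx|^a$ for $|\bx|>R/2$). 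These bounds are routine given your estimates on $N$ and $\Delta W$, so this is a gap in verification rather than a flaw in the approach, but as written the dominated convergence step is not justified.
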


\begin{proof}
Denote
\begin{equation}
	\Omega_{\epsilon,R} = \{\by:\epsilon < |\by|< R\},\quad W_{\epsilon,R}(\bx) =  \int_{\Omega_{\epsilon,R}} \Big(N(\bx)-\frac{1}{2}N(\bx-\by)-\frac{1}{2}N(\bx+\by)\Big)\Delta W(\by)\rd{\by}\,.
\end{equation}
Then each $W_{\epsilon,R}$ is continuous in $\Omega_{\epsilon,R}$. Then, for any $\bx\in \Omega_{\epsilon,R}$, we use integration by parts to calculate
\begin{equation}\begin{split}
		W_{\epsilon,R}(\bx) = & \int_{\partial\Omega_{\epsilon,R}} \Big(N(\bx)-\frac{1}{2}N(\bx-\by)-\frac{1}{2}N(\bx+\by)\Big)\nabla W(\by)\cdot \vec{n}\rd{S(\by)} \\
		& -  \frac{1}{2}\int_{\Omega_{\epsilon,R}} \Big(\nabla N(\bx-\by)-\nabla N(\bx+\by)\Big)\cdot\nabla W(\by)\rd{\by} \\
		= & \int_{\partial\Omega_{\epsilon,R}} \Big(N(\bx)-\frac{1}{2}N(\bx-\by)-\frac{1}{2}N(\bx+\by)\Big)\nabla W(\by)\cdot \vec{n}\rd{S(\by)} \\
		&-  \frac{1}{2}\int_{\partial\Omega_{\epsilon,R}} \Big(\nabla N(\bx-\by)-\nabla N(\bx+\by)\Big)W(\by)\cdot \vec{n}\rd{S(\by)} \\
		& +  \frac{1}{2}\int_{\Omega_{\epsilon,R}} \Big(-\Delta N(\bx-\by)-\Delta N(\bx+\by)\Big)W(\by)\rd{\by} \\
		= & \int_{|\by|=\epsilon} \Big(N(\bx)-\frac{1}{2}N(\bx-\by)-\frac{1}{2}N(\bx+\by)\Big)\nabla W(\by)\cdot \vec{n}\rd{S(\by)} \\
		& +  \int_{|\by|=R} \Big(N(\bx)-\frac{1}{2}N(\bx-\by)-\frac{1}{2}N(\bx+\by)\Big)\nabla W(\by)\cdot \vec{n}\rd{S(\by)} \\
		&-  \frac{1}{2}\int_{|\by|=\epsilon} \Big(\nabla N(\bx-\by)-\nabla N(\bx+\by)\Big)W(\by)\cdot \vec{n}\rd{S(\by)}\\ & -  \frac{1}{2}\int_{|\by|=R} \Big(\nabla N(\bx-\by)-\nabla N(\bx+\by)\Big)W(\by)\cdot \vec{n}\rd{S(\by)} \\
		& +  \frac{1}{2}\int_{\Omega_{\epsilon,R}} \Big(-\Delta N(\bx-\by)-\Delta N(\bx+\by)\Big)W(\by)\rd{\by} \\
		=: & I_{1,\epsilon} + I_{1,R} +I_{2,\epsilon} + I_{2,R}  + I_3\,.
\end{split}\end{equation}

Then we fix $\bx\in\mathbb{R}^d\backslash \{0\}$. For sufficiently small $\epsilon>0$ and large $R>0$, we have $\bx\in \Omega_{\epsilon,R}$. Then $I_3=W(\bx)$ since $-\Delta N = \delta$ and $W$ is an even function.

{\bf STEP 1}: uniform convergence on compact sets.

Then we estimate other terms in the case $d\ne 2$. For $I_{1,\epsilon}$, we use $|\nabla W(\by)| \lesssim |\by|^{b-1}$ and the estimate
\begin{equation}
	\Big|N(\bx)-\frac{1}{2}N(\bx-\by)-\frac{1}{2}N(\bx+\by)\Big|\lesssim |\bx|^{-d}\epsilon^2,\quad \text{if }|\by| = \epsilon \le \frac{|\bx|}{2}\,,
\end{equation}
to get
\begin{equation}\label{I1epsest1}
	\left|I_{1,\epsilon}\right| \lesssim |\bx|^{-d}\epsilon^2 \cdot \epsilon^{b-1} \cdot \epsilon^{d-1} =  |\bx|^{-d}\epsilon^{b+d},\quad \text{if }\epsilon \le \frac{|\bx|}{2}\,,
\end{equation}
which converges to zero as $\epsilon\rightarrow 0$ since $b>-d$. For $I_{1,R}$, we use $|\nabla W(\by)| \lesssim |\by|^{a-1}$ and the estimate
\begin{equation}\label{NNNest1}
	\Big|N(\bx)-\frac{1}{2}N(\bx-\by)-\frac{1}{2}N(\bx+\by)\Big|\lesssim |\bx|^{2-d}+R^{2-d},\quad \text{if }|\by| = R \ge 2|\bx|\,,
\end{equation}
to get
\begin{equation}\label{I1Rest1}
	\left|I_{1,R}\right| \lesssim (|\bx|^{2-d}+R^{2-d})\cdot R^{a-1}  \cdot R^{d-1} =  |\bx|^{2-d} R^{a+d-2} + R^a,\quad \text{if }R \ge 2|\bx|\,,
\end{equation}
which converges to zero as $R\rightarrow\infty$ since $a<\min\{2-d,0\}$. Therefore we conclude that
\begin{equation}
	\lim_{\epsilon\rightarrow 0}I_{1,\epsilon} = 0,\quad \lim_{R\rightarrow\infty} I_{1,R} = 0\,.
\end{equation}
Then we estimate $I_{2,\epsilon},I_{2,R}$ similarly:
\begin{equation}\label{I1epsest2}
	\left|I_{2,\epsilon}\right| \lesssim |\bx|^{1-d}\epsilon \cdot \epsilon^b \cdot  \epsilon^{d-1} = |\bx|^{1-d}\epsilon^{b+d},\quad \text{if }\epsilon \le \frac{|\bx|}{2}\,,
\end{equation}
and
\begin{equation}\label{I1Rest2}
	\left|I_{2,R}\right| \lesssim R^{1-d} \cdot R^a \cdot R^{d-1}  =  R^a,\quad \text{if }R \ge 2|\bx|\,,
\end{equation}
which shows that
\begin{equation}
	\lim_{\epsilon\rightarrow 0}I_{2,\epsilon} = 0,\quad \lim_{R\rightarrow\infty} I_{2,R} = 0\,.
\end{equation}
This gives the limit \eqref{lem_convexpL_0} in the pointwise sense on $\mathbb{R}^d\backslash \{0\}$. Also, these estimates show that these limits are uniform on compact sets for $\bx\in\mathbb{R}^d\backslash\{0\}$.

For $d=2$, one needs to replace \eqref{NNNest1} by 
\begin{equation}
	\Big|N(\bx)-\frac{1}{2}N(\bx-\by)-\frac{1}{2}N(\bx+\by)\Big|\lesssim \big|\ln|\bx|\big|+\ln R\,,
\end{equation}
and then
\begin{equation}
	|I_{1,R}|\lesssim \big|\ln|\bx|\big| R^{a+d-2} + R^a\ln R\,,
\end{equation}
and the same convergence still works.

{\bf STEP 2}: convergence in the sense of tempered distribution.

The results \eqref{I1epsest1}\eqref{I1Rest1}\eqref{I1epsest2}\eqref{I1Rest2} in STEP 1 give
\begin{equation}
	|W_{\epsilon,R}(\bx)-W(\bx)| \lesssim |\bx|^b + |\bx|^{b+1} + |\bx|^a,\quad \text{if }2\epsilon\le |\bx|\le \frac{R}{2}\,,
\end{equation}
(up to extra logarithmic factors in $d=2$, similarly below). Then we need to estimate the size of $W_{\epsilon,R}(\bx)$ for $|\bx|<2\epsilon$ or $|\bx|>\frac{R}{2}$. If $|\bx|<2\epsilon$, then we calculate
\begin{equation}\begin{split}
		W_{\epsilon,R}(\bx) = & \int_{\Omega_{\epsilon,R}} \Big(N(\bx)-\frac{1}{2}N(\bx-\by)-\frac{1}{2}N(\bx+\by)\Big)\Delta W(\by)\rd{\by} \\
		= & \int_{\epsilon\le |\by| \le 3\epsilon} + \int_{3\epsilon< |\by| \le 1}+ \int_{1< |\by| \le R}\,.
\end{split}\end{equation}
Using $|\Delta W| \lesssim |\bx|^{b-2}$ for $0<|\bx|< 1$, we get
\begin{equation}\begin{split}
		&  \left|\int_{\epsilon\le |\by| \le 3\epsilon} \Big(N(\bx)-\frac{1}{2}N(\bx-\by)-\frac{1}{2}N(\bx+\by)\Big)\Delta W(\by)\rd{\by}\right| \\
		\lesssim & (|N(\bx)|\cdot \epsilon^d + \epsilon^2) \cdot \epsilon^{b-2} \lesssim |\bx|^{2-d}\epsilon^{b-2+d} +  \epsilon^b \lesssim |\bx|^b\,,
\end{split}\end{equation}
where the terms involving $N(\bx\pm\by)$ are handled by the estimate $\int_{|\by|\le 6\epsilon}|N(\by)|\rd{\by} \lesssim \epsilon^2$, and we used $b<\min\{2-d,0\}$ in the last inequality. The other two integrals can be estimated by
\begin{equation}\begin{split}
		& \left|\int_{3\epsilon< |\by| \le 1} \Big(N(\bx)-\frac{1}{2}N(\bx-\by)-\frac{1}{2}N(\bx+\by)\Big)\Delta W(\by)\rd{\by}\right| \\
		\lesssim & |N(\bx)|\cdot \int_{3\epsilon< |\by| \le 1}|\by|^{b-2}\rd{\by} + \int_{3\epsilon< |\by| \le 1}|\by|^{2-d}\cdot|\by|^{b-2}\rd{\by} \lesssim |\bx|^{2-d} \epsilon^{b-2+d} + \epsilon^b  \lesssim |\bx|^b\,,
\end{split}\end{equation}
using $|N(\bx\pm\by)| \lesssim |N(\by)| \lesssim |\by|^{2-d}$, and 
\begin{equation}\begin{split}
		& \left|\int_{1< |\by| \le R} \Big(N(\bx)-\frac{1}{2}N(\bx-\by)-\frac{1}{2}N(\bx+\by)\Big)\Delta W(\by)\rd{\by}\right| \\
		\lesssim & |N(\bx)|\cdot \int_{1< |\by| \le R} |\by|^{a-2}\rd{\by} + \int_{1< |\by| \le R}|\by|^{2-d}\cdot|\by|^{a-2}\rd{\by} \lesssim |\bx|^{2-d}+1\,,
\end{split}\end{equation}
where the last two integral are $O(1)$ since $a<\min\{2-d,0\}$. Therefore we get
\begin{equation}
	|W_{\epsilon,R}(\bx)|\lesssim |\bx|^b,\quad \forall |\bx|<2\epsilon\,,
\end{equation}
uniformly in $\epsilon,R$.

If $|\bx|>\frac{R}{2}$, then we calculate
\begin{equation}\begin{split}
		W_{\epsilon,R}(\bx) = & \int_{\Omega_{\epsilon,R}} \Big(N(\bx)-\frac{1}{2}N(\bx-\by)-\frac{1}{2}N(\bx+\by)\Big)\Delta W(\by)\rd{\by} \\
		= & \int_{\epsilon\le |\by| \le 1} + \int_{1< |\by| \le R/3}+ \int_{R/3< |\by| \le R}\,,
\end{split}\end{equation}
and one can use similar estimates. In fact,
\begin{equation}
	\left|\int_{\epsilon\le |\by| \le 1}\right| \lesssim \int_{\epsilon\le |\by| \le 1} |\bx|^{-d}|\by|^2 |\by|^{b-2} \rd{\by} \lesssim |\bx|^{-d} \,,
\end{equation}
\begin{equation}
	\left|\int_{1< |\by| \le R/3}\right| \lesssim |N(\bx)| \int_{1< |\by| \le R/3}|\by|^{a-2} \rd{\by} \lesssim |\bx|^{2-d} \,,
\end{equation}
\begin{equation}
	\left|\int_{R/3< |\by| \le R}\right| \lesssim R^{a-2} \int_{R/3< |\by| \le R} \Big|N(\bx)-\frac{1}{2}N(\bx-\by)-\frac{1}{2}N(\bx+\by)\Big|\rd{\by} \lesssim R^{a-2}\cdot (|\bx|^{2-d}R^d + R^2) \lesssim |\bx|^a\,,
\end{equation}
and we get
\begin{equation}
	|W_{\epsilon,R}(\bx)|\lesssim |\bx|^a,\quad \forall |\bx|>\frac{R}{2}\,,
\end{equation}
uniformly in $\epsilon,R$.

Therefore, we conclude that
\begin{equation}
	|W_{\epsilon,R}(\bx)-W(\bx)| \lesssim |\bx|^b + |\bx|^{b+1} + |\bx|^a\,,
\end{equation}
for any $\bx\ne 0$, where the implied constant is independent of $\epsilon$ and $R$. Notice that the RHS is a locally integrable function with at most polynomial growth at infinity. Therefore, combining with the uniform convergence on compact sets on $\mathbb{R}^d\backslash \{0\}$ that we have proved, we conclude that $W_{\epsilon,R}\rightarrow W$ in the sense of tempered distribution.

{\bf STEP 3}: Fourier transform.

Using the fact that 
\begin{equation}
	\hat{N}(\xi)= \frac{1}{4\pi^2}|\xi|^{-2}\,,
\end{equation}
for $\xi\ne 0$, we see that 
\begin{equation}
	\cF\Big[N(\cdot)-\frac{1}{2}N(\cdot-\by)-\frac{1}{2}N(\cdot+\by)\Big] = \frac{1}{4\pi^2}|\xi|^{-2}\big(1-\cos(2\pi\by\cdot\xi)\big)\,.
\end{equation}
This leads to 
\begin{equation}
	\hat{W}_{\epsilon,R}(\xi) = \frac{1}{4\pi^2}\int_{\epsilon \le |\by|\le R} |\xi|^{-2}\big(1-\cos(2\pi\by\cdot\xi)\big)\Delta W(\by)\rd{\by}\,.
\end{equation}
Sending $\epsilon\rightarrow0,\,R\rightarrow\infty$, since $W_{\epsilon,R}\rightarrow W$ in the sense of tempered distribution, we have $\hat{W}_{\epsilon,R}\rightarrow \hat{W}$ in the sense of tempered distribution, i.e., \eqref{lem_convexpL_1} in the sense of tempered distribution. Also, it is clear that the limit in \eqref{lem_convexpL_1} exists for any fixed $\xi\ne 0$. Using the estimate
\begin{equation}
	|\xi|^{-2}\big(1-\cos(2\pi\by\cdot\xi)\big) \lesssim \min\{|\by|^2,|\xi|^{-2}\}\,,
\end{equation}
we estimate (uniformly in $\epsilon,R$)
\begin{equation}\begin{split}
		|\hat{W}_{\epsilon,R}(\xi)|  \lesssim & \int_{\epsilon \le |\by|\le |\xi|^{-1}} |\by|^2|\Delta W(\by)| \rd{\by} + \int_{|\xi|^{-1}< |\by|\le 1} |\xi|^{-2}|\Delta W(\by)| \rd{\by} + \int_{1 < |\by|\le R} |\xi|^{-2}|\Delta W(\by)| \rd{\by} \\
		\lesssim & |\xi|^{-(b+d)} + |\xi|^{-(b-2+d)}\cdot |\xi|^{-2} + |\xi|^{-2} \lesssim |\xi|^{-(b+d)}\,,
\end{split}\end{equation}
if $|\xi| > 1$, and
\begin{equation}\begin{split}
		|\hat{W}_{\epsilon,R}(\xi)|  \lesssim & \int_{\epsilon \le |\by|\le 1} |\by|^2|\Delta W(\by)| \rd{\by} + \int_{1< |\by|\le |\xi|^{-1}} |\by|^2|\Delta W(\by)| \rd{\by} + \int_{|\xi|^{-1} < |\by|\le R} |\xi|^{-2}|\Delta W(\by)| \rd{\by} \\
		\lesssim & 1 + |\xi|^{-(a+d)} + |\xi|^{-(a-2+d)}\cdot |\xi|^{-2} \lesssim |\xi|^{-(a+d)}\,,
\end{split}\end{equation}
if $0<|\xi|\le 1$. This implies that $\hat{W}$ is a locally integrable function satisfying the same estimates.

\end{proof}

\begin{remark}
	For radial functions $W$ satisfying the assumptions of Lemma \ref{lem_convexpL}, one can integrate the $\by$ variable in \eqref{lem_convexpL_0} on spheres to simplify the representation formula. In fact, for any $s>0$, it is easy to derive that
	\begin{equation}
		\int_{|\by|=s}\Big(N(\bx)-\frac{1}{2}N(\bx-\by)-\frac{1}{2}N(\bx+\by)\Big) \rd{S(\by)} = s^{d-1}|S^{d-1}| (N(\bx) -N(s))_+\,.
	\end{equation}
	Therefore we have
	\begin{equation}\label{lem_convexpL_0r}
		W(\bx) = |S^{d-1}|\lim_{\epsilon\rightarrow0,\,R\rightarrow\infty} \int_\epsilon^R \big(N(\bx) -N(s)\big)_+ s^{d-1}\Delta W(s)\rd{s}\,.
	\end{equation}
	In particular, we see that if an essentially convex function is radial, then it is necessarily radially decreasing and positive.
	
	We remark that the strong convexity of a positive linear combination of the truncated Newtonian potentials $\big(N(\cdot) -N(s)\big)_+$ is a known result, for example, see \cite[Section 8.2]{Fug}.
\end{remark}

\begin{proof}[Proof of Lemma \ref{lem_EC}]
	
By definition, $W$ satisfies {\bf (W0)}{\bf (W1)}, and by Lemma \ref{lem_convexpL}, $\hat{W}$ is a locally integrable function. Therefore we may apply \cite[Theorem 3.4]{Shu_convex} to see that $W$ is Fourier representable at level 0, i.e.,
\begin{equation}\label{EWmu}
	E_W[\mu] = \frac{1}{2}\int_{\mathbb{R}^d\backslash\{0\}} \hat{W}(\xi)|\hat{\mu}(\xi)|^2\rd{\xi}\,,
\end{equation}
for any compactly supported signed measure $\mu$ satisfying $E_W[|\mu|]<\infty$. 

{
Consider a signed measure $\mu$ such that $E_W[\mu]$ is well-defined. If $E_W[\mu]=\infty$ then we already have $E_W[\mu]>0$. Otherwise, 
we decompose $\mu=\mu_+-\mu_-$ as its positive and negative parts., and then $E_W[\mu_+],E_W[\mu_-]$ are also finite. Then we may proceed similarly as the proof of \cite[Theorem 3.12]{Shu_convex} to see that $E_W[\mu_++\mu_-]<\infty$, i.e., $E_W[|\mu|]<\infty$. 
}

Then we truncate it as
\begin{equation}
	\mu_R := \mu\chi_{B(0;R)}\,.
\end{equation}
Then we have $E_W[\mu_R] = \frac{1}{2}\int_{\mathbb{R}^d\backslash\{0\}} \hat{W}(\xi)|\hat{\mu}_R(\xi)|^2\rd{\xi}$. Using $E_W[|\mu|]<\infty$, we may apply dominated convergence to see that 
\begin{equation}
	\lim_{R\rightarrow\infty} E_W[\mu_R] = E_W[\mu]\,.
\end{equation}
On the other hand, we have $\lim_{R\rightarrow\infty}\hat{\mu}_R(\xi)=\hat{\mu}(\xi)$ pointwisely. Thus by Fatou's lemma,
\begin{equation}
	\liminf_{R\rightarrow\infty} \frac{1}{2}\int_{\mathbb{R}^d\backslash\{0\}} \hat{W}(\xi)|\hat{\mu}_R(\xi)|^2\rd{\xi} \ge \frac{1}{2}\int_{\mathbb{R}^d\backslash\{0\}} \hat{W}(\xi)|\hat{\mu}(\xi)|^2\rd{\xi}\,.
\end{equation}
Therefore we conclude that
\begin{equation}
	E_W[\mu]\ge \frac{1}{2}\int_{\mathbb{R}^d\backslash\{0\}} \hat{W}(\xi)|\hat{\mu}(\xi)|^2\rd{\xi}\,.
\end{equation}
Due to \eqref{lem_convexpL_1} and the assumption that $\Delta W>0$ on $\mathbb{R}^d\backslash \{0\}$, we see that $\hat{W}>0$ on $\mathbb{R}^d\backslash \{0\}$, and thus $E_W[\mu]>0$. This proves the SPD property of $W$.

\end{proof}


\section{The height functional}\label{sec_H}

Let $\cS$ be a nonempty open subset of $\mathbb{R}^d$. For $\rho\in\cM(\mathbb{R}^d)$, define the \emph{height} functional
\begin{equation}\label{H}
	\cH_{\cS}[\rho] = \essinf_{\bx\in \cS} V[\rho](\bx)\,,
\end{equation}
(consistent with \eqref{H0}) which is bounded from below since $W$ and $U$ are bounded from below by {\bf (W0)}{\bf (U0)}. 

\begin{lemma}\label{lem_Hcont}
	Assume {\bf (W0)}{\bf (U0)}. Then $\cH_{\cS}$ is upper semi-continuous with respect to weak convergence on $\cM(\overline{B(0;R)})$ for any $R>0$.
\end{lemma}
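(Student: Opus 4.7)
Suppose $\rho_n \to \rho$ weakly in $\cM(\overline{B(0;R)})$; the aim is $\limsup_n \cH_{\cS}[\rho_n] \le \cH_{\cS}[\rho]$. We may assume $\cH_{\cS}[\rho]<\infty$, fix an arbitrary $c>\cH_{\cS}[\rho]$, and show $\cH_{\cS}[\rho_n]<c$ for all sufficiently large $n$. Since $\essinf_{\cS} V[\rho_n]$ is dominated by the average of $V[\rho_n]$ over any ball $B\subset\cS$, the strategy is to locate a small open ball $B\subset\cS$ on which the average of $V[\rho]$ is already less than $c$, and then to transfer this property to $V[\rho_n]$ by showing that the averages of $V[\rho_n]$ over the \emph{same} $B$ converge to that of $V[\rho]$.

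Locating $B$: by Tonelli and the local integrability of $W$ and $U$ together with the compact support of $\rho$, $V[\rho]\in L^1_{\textnormal{loc}}(\mathbb{R}^d)$. By definition of essinf, the set $E:=\{\bx\in\cS:V[\rho](\bx)<c\}$ has positive Lebesgue measure. Lebesgue's differentiation theorem then supplies a Lebesgue point $\bx_0$ of $V[\rho]$ belonging to $E$; taking $r>0$ small enough that $B:=B(\bx_0;r)\subset\cS$ and $|B|^{-1}\int_B V[\rho]\rd\bx<c$ is possible because this average converges to $V[\rho](\bx_0)<c$ as $r\to 0$.

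The key technical step is
\begin{equation}
	\int_B V[\rho_n]\rd\bx \longrightarrow \int_B V[\rho]\rd\bx.
\end{equation}
The $U$-contribution is the same for every $n$, so it suffices to handle $\int_B (W*\rho_n)\rd\bx=\int g\rd\rho_n$ with $g(\by):=\int_B W(\bx-\by)\rd\bx=\int_{B-\by}W(\bu)\rd\bu$, Fubini being justified since $W$ is bounded below. As $\by$ ranges over $\overline{B(0;R)}$, the region $B-\by$ stays inside the fixed compact set $B-\overline{B(0;R)}$ on which $W$ is integrable; hence $g$ is uniformly bounded on $\overline{B(0;R)}$. Continuity of $g$ follows from absolute continuity of the Lebesgue integral: $|g(\by)-g(\by')|\le \int_{(B-\by)\triangle(B-\by')}|W|\rd\bu$, and the measure of the symmetric difference tends to $0$ as $\by'\to\by$. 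Therefore $g\in C(\overline{B(0;R)})$, and weak convergence yields $\int g\rd\rho_n\to \int g\rd\rho$.

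Combining, $|B|^{-1}\int_B V[\rho_n]\rd\bx<c$ for all large $n$, whence $\cH_{\cS}[\rho_n]\le \essinf_B V[\rho_n]\le |B|^{-1}\int_B V[\rho_n]\rd\bx<c$. Taking $\limsup$ in $n$ and letting $c\downarrow \cH_{\cS}[\rho]$ completes the proof. The main subtlety is handling the possible singularity of $W$ when testing $\rho_n\to\rho$: pointwise lower semicontinuity of $V[\rho]$ goes in the wrong direction for an \emph{upper} semicontinuous essinf, so one cannot test against the singular kernel directly; the averaging trick turns the convolution into integration against the continuous, bounded function $g$, which is precisely what the local integrability assumption in {\bf (W0)} guarantees.
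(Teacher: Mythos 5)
Your proof is correct and follows essentially the same route as the paper's: both arguments replace the singular kernel by its average over a small ball around a Lebesgue point of $V[\rho]$ where $V[\rho]$ is below the target level, observe that this average is a continuous bounded test function of the measure (you use the sharp indicator $\chi_B/|B|$ where the paper uses a smooth mollifier $\phi_\epsilon$, an immaterial difference), pass to the limit by weak convergence, and bound $\cH_{\cS}[\rho_n]$ by the resulting average. The only cosmetic distinction is that you argue directly with an arbitrary level $c>\cH_{\cS}[\rho]$ while the paper phrases the same step as a contradiction against $H=\limsup_n\cH_{\cS}[\rho_n]$.
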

\begin{proof}
	Let $\{\rho_n\}\subset \cM(\overline{B(0;R)})$ converge weakly to $\rho$. Then $V[\rho]=W*\rho+U$ is a locally integrable function, and almost all $\bx\in\mathbb{R}^d$ are its Lebesgue points, i.e., $V[\rho](\bx)<\infty$ and
	\begin{equation}
		\lim_{\epsilon\rightarrow 0}\frac{1}{|B(\bx;\epsilon)|}\int_{B(\bx;\epsilon)}V[\rho](\by)\rd{\by} = V[\rho](\bx)\,.
	\end{equation}
	Denote $H = \limsup \cH_{\cS}[\rho_n]$, and we need to prove that $\cH_{\cS}[\rho] \ge H$. 
	
	Fix a mollifier $\phi_\epsilon$. For any Lebesgue point $\bx\in \cS$ of $V[\rho]$, suppose $V[\rho](\bx) < H-\alpha$ for some $\alpha>0$, then there exists $\epsilon>0$ such that $B(\bx;\epsilon)\subset \cS$ and
	\begin{equation}
		(V[\rho]*\phi_\epsilon)(\bx)<H-\alpha\,.
	\end{equation}
	Notice that
	\begin{equation}
		V[\rho]*\phi_\epsilon = (W*\phi_\epsilon)*\rho + U*\phi_\epsilon\,,
	\end{equation}
	and $W*\phi_\epsilon$ is continuous. Since $\rho_n$ converges weakly to $\rho$ with each $\rho_n$ supported on  $\overline{B(0;R)}$, we see that
	\begin{equation}
		\lim_{n\rightarrow\infty}(V[\rho_n]*\phi_\epsilon)(\bx) = (V[\rho]*\phi_\epsilon)(\bx) < H-\alpha\,,
	\end{equation}
	and thus for sufficiently large $n$
	\begin{equation}
		(V[\rho_n]*\phi_\epsilon)(\bx) < H-\alpha\,.
	\end{equation}
	Since $B(\bx;\epsilon)\subset \cS$, this contradicts $\cH_{\cS}[\rho_n] = \essinf_{\cS} V[\rho_n] > H-\frac{\alpha}{2}$ for sufficiently large $n$. Therefore we conclude that $V[\rho](\bx) \ge H$ for any Lebesgue point $\bx$ of $V[\rho]$. Since almost all $\bx\in \cS$ are Lebesgue points of $V[\rho]$, we get $\cH_{\cS}[\rho] \ge H$.

\end{proof}

Then we prove the existence of maximizer for $\cH_\cS$ under a subcritical condition.
\begin{theorem}[Existence of maximizer]\label{thm_Hexist}
	Assume {\bf (W0)}{\bf (U0)} and $\lim_{|\bx|\rightarrow\infty}U(\bx)=U_\infty\in\mathbb{R}$. Assume $W$ is not identically 0, $W\ge 0$ and $\lim_{|\bx|\rightarrow\infty}W(\bx)=0$. Assume 
	\begin{equation}\label{thm_Hexist_1}
		\sup_{\rho\in\cM(\mathbb{R}^d)}\cH_{\cS}[\rho] < U_\infty\,.
	\end{equation}
	Then there exists a maximizer of $\cH_{\cS}$ in $\cM(\mathbb{R}^d)$, and any maximizer of $\cH_{\cS}$ in $\cM(\mathbb{R}^d)$ is compactly supported.
\end{theorem}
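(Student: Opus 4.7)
The plan is to produce a maximizer by a compactness-plus-upper-semicontinuity argument: first I would establish tightness of any maximizing sequence via a mass-relocation trick that exploits the subcritical condition $H^* := \sup_{\rho\in\cM(\mathbb{R}^d)}\cH_\cS[\rho] < U_\infty$, then extract a weak limit and invoke an extension of Lemma \ref{lem_Hcont}. Set $\eta := (U_\infty - H^*)/2 > 0$. Before starting I would fix $R_0$ large enough that $U(\bx) > U_\infty - \eta/2$ on $B(0;R_0)^c$, and construct a ``probe'' probability measure $\nu_0\in\cM(\mathbb{R}^d)$ of compact support with $(W*\nu_0)(\bx) \ge c > 0$ uniformly on $\overline{B(0;R_0)}$. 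Such $\nu_0$ exists because $W\ge 0$ is locally integrable, lower semi-continuous and not identically zero, so averaging a uniform density on a sufficiently large ball produces a continuous nonnegative function with a uniform positive lower bound on any prescribed compact set.

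For the tightness step, let $\{\rho_n\}$ be maximizing and suppose toward contradiction that $\alpha_n := \rho_n(B(0;R)^c)\ge\epsilon_0$ for some $\epsilon_0>0$, along a subsequence, for every $R>0$. Setting $\tilde\rho_n := \rho_n|_{\overline{B(0;R)}} + \alpha_n\nu_0$, a direct computation gives
\begin{equation*}
	V[\tilde\rho_n](\bx) - V[\rho_n](\bx) = \alpha_n (W*\nu_0)(\bx) - \int_{B(0;R)^c} W(\bx-\by)\rd\rho_n(\by) \ge \epsilon_0 c - \sup_{|\bz|>R-R_0}W(\bz)
\end{equation*}
for every $\bx\in\overline{B(0;R_0)}$. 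Choosing $R$ so large that the right side exceeds $\epsilon_0 c/2$ makes $V[\tilde\rho_n]\ge V[\rho_n] + \epsilon_0 c/2$ on $\cS\cap\overline{B(0;R_0)}$, while on $\cS\cap B(0;R_0)^c$ the bound $V[\tilde\rho_n]\ge U > H^* + \eta/2$ holds by $W\ge 0$. Taking essential infimum over $\cS$ yields $\cH_\cS[\tilde\rho_n] \ge H^* + \min(\epsilon_0 c/2,\eta/2)$ for large $n$, contradicting $H^* = \sup\cH_\cS$. Once tightness is in hand, Prokhorov produces a subsequence $\rho_n\to\rho_\infty\in\cM(\mathbb{R}^d)$ weakly, and the argument of Lemma \ref{lem_Hcont} extends to this setting because $W*\phi_\epsilon$ is continuous and vanishes at infinity (hence a valid test function for weak convergence of a tight sequence), giving $(V[\rho_n]*\phi_\epsilon)(\bx)\to (V[\rho_\infty]*\phi_\epsilon)(\bx)$ at every Lebesgue point of $V[\rho_\infty]$. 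Upper semi-continuity then yields $\cH_\cS[\rho_\infty]\ge H^*$, so $\rho_\infty$ is a maximizer.

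For compact support, I would take any maximizer $\rho_\infty$ and suppose $\supp\rho_\infty$ were unbounded, picking $\bx_0\in\supp\rho_\infty$ with $|\bx_0|$ arbitrarily large. With $m := \rho_\infty(B(\bx_0;1))>0$ and $\rho' := \rho_\infty - \rho_\infty|_{B(\bx_0;1)} + m\nu_0$, the same mass-relocation computation yields $V[\rho']\ge V[\rho_\infty] + mc/2$ on $\cS\cap\overline{B(0;R_0)}$ provided $|\bx_0|$ is large enough that $\sup_{|\bz|>|\bx_0|-R_0-1}W(\bz)<c/2$, and $V[\rho']\ge U > H^* + \eta/2$ on $\cS\cap B(0;R_0)^c$, giving $\cH_\cS[\rho']>H^*$ and contradicting maximality. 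I expect the main obstacle to be the preliminary construction of the probe measure $\nu_0$ with a uniform positive lower bound on $\overline{B(0;R_0)}$: this requires carefully using local integrability, lower semi-continuity and nontriviality of $W$ together with a sufficiently spread-out averaging kernel; once $\nu_0$ is in hand, the two mass-relocation contradictions and Lemma \ref{lem_Hcont} close the argument.
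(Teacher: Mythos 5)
Your proposal is correct and follows essentially the same strategy as the paper: relocate escaping mass onto a fixed compactly supported probe measure (the paper uses $\frac{1}{|B(0;2R)|}\chi_{B(0;2R)}$) whose generated potential is uniformly positive on a large ball, use $W\ge 0$ together with $U$ being close to $U_\infty>\sup\cH_\cS$ outside that ball, then pass to a weak limit and invoke upper semicontinuity. The only difference is bookkeeping: the paper replaces the whole maximizing sequence by a uniformly compactly supported one so that Lemma \ref{lem_Hcont} applies verbatim, whereas you prove tightness by contradiction and then need the (valid, but worth stating) extension of Lemma \ref{lem_Hcont} to merely tight sequences, which holds because $W*\phi_\epsilon$ is bounded and continuous under the hypotheses $\lim_{|\bx|\rightarrow\infty}W(\bx)=0$ and {\bf (W0)}.
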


We remark that the assumption \eqref{thm_Hexist_1} can indeed be dropped when $\cS$ is bounded. This is because $\cH_{\cS}[\rho]$ is independent of the values of $U$ on $\cS^c$, one can justify $\sup_{\rho\in\cM(\mathbb{R}^d)}\cH_{\cS}[\rho]<\infty$, and one can modify $U$ on $\cS^c$ so that \eqref{thm_Hexist_1} becomes true. The detail of such an argument can be found in the proof of Lemma \ref{lem_rhosharp} in the next section.

\begin{proof}
	We start by denoting 
	\begin{equation}
		\epsilon := U_\infty - \sup_{\rho\in\cM(\mathbb{R}^d)}\cH_{\cS}[\rho]  > 0\,.
	\end{equation}
	We take $R$ sufficiently large so that
	\begin{equation}
		U(\bx)\ge U_\infty-\frac{\epsilon}{2},\quad \forall |\bx|\ge R\,.
	\end{equation} 
	Then, for any $\rho\in\cM(\mathbb{R}^d)$, we have 
	\begin{equation}
		V[\rho](\bx)\ge U_\infty-\frac{\epsilon}{2},\quad \forall |\bx|\ge R\,,
	\end{equation}
	since $W\ge 0$.
	
	Since $W$ is not identically 0, $W\ge 0$ and $W$ is lower-semicontinuous, we have $W>0$ on some open set. Therefore, by further increasing $R$, we have
	\begin{equation}
		\int_{B(0;2R)}W(\bx-\by)\rd{\by}\ge \alpha > 0,\quad \forall \bx\in B(0;R)\,,
	\end{equation}
	and
	\begin{equation}
		B(0;R)\cap \cS \ne \emptyset\,.
	\end{equation}
	Since $\lim_{|\bx|\rightarrow\infty}W(\bx)=0$, we take $R_1>2R$ so that
	\begin{equation}
		0\le W(\bx) \le \frac{\alpha}{2|B(0;2R)|},\quad \forall |\bx| \ge R_1-R\,.
	\end{equation}
	
	Let $\{\rho_n\}$ be a maximizing sequence of $\cH_{\cS}$. Then for each $\rho_n$, define
	\begin{equation}\label{tilderho}
		\tilde{\rho}_n = \rho_n\chi_{B(0;R_1)} + \big(1-\rho_n(B(0;R_1))\big)\frac{1}{|B(0;2R)|}\chi_{B(0;2R)} \in \cM(\mathbb{R}^d)\,.
	\end{equation}
	Then, for any $\bx\in B(0;R)$,
	\begin{equation}\begin{split}
			V[\tilde{\rho}_n](\bx) - V[\rho_n](\bx) = & \int_{\mathbb{R}^d}W(\bx-\by)\Big(\big(1-\rho_n(B(0;R_1))\big)\frac{1}{|B(0;2R)|}\chi_{B(0;2R)} - \rho_n\chi_{B(0;R_1)^c} \Big)\rd{\by} \\
			= & \big(1-\rho_n(B(0;R_1))\big)\frac{1}{|B(0;2R)|}\int_{B(0;2R)}W(\bx-\by)\rd{\by} - \int_{B(0;R_1)^c}W(\bx-\by)\rho_n(\by)\rd{\by} \\
			\ge & \big(1-\rho_n(B(0;R_1))\big)\frac{\alpha}{|B(0;2R)|} - \frac{\alpha}{2|B(0;2R)|}\int_{B(0;R_1)^c}\rho_n(\by)\rd{\by} \\
			= & \big(1-\rho_n(B(0;R_1))\big)\frac{\alpha}{2|B(0;2R)|} \ge 0\,,
	\end{split}\end{equation}
	where in the first inequality we used that $|\bx-\by|\ge R_1-R$ in the second integral. This implies 
	\begin{equation}\begin{split}
			\cH_{\cS}[\tilde{\rho}_n] = & \min\Big\{\essinf_{\bx\in B(0;R)\cap \cS} V[\tilde{\rho}_n](\bx), \essinf_{\bx\in B(0;R)^c \cap \cS} V[\tilde{\rho}_n](\bx)\Big\} \\
			\ge & \min\big\{\cH_{\cS}[\rho_n], U_\infty-\frac{\epsilon}{2}\big\} = \cH_{\cS}[\rho_n]\,, \\
	\end{split}\end{equation}
	i.e., $\{\tilde{\rho}_n\}$ is also a maximizing sequence of $\cH_{\cS}$. 
	
	Since every $\tilde{\rho}_n$ is supported inside $\overline{B(0;R_1)}$, we may take a weakly convergent subsequence, still denoted as $\{\tilde{\rho}_n\}$, which converges weakly to $\rho_\infty\in \cM(\mathbb{R}^d)$. Then Lemma \ref{lem_Hcont} shows that $\rho_\infty$ is a maximizer of $\cH_{\cS}$.
	
	To see that any maximizer $\rho$ of $\cH_{\cS}$ is compactly supported, we define $\tilde{\rho}$ as in \eqref{tilderho}, and one can see that if $\rho(B(0;R_1))<1$ then $V[\tilde{\rho}](\bx) - V[\rho](\bx) \ge (1-\rho(B(0;R_1)))\frac{\alpha}{2|B(0;2R)|} > 0$ for any $\bx\in  B(0;R)$. Since $B(0;R)\cap \cS \ne \emptyset$, this would lead to $\cH_{\cS}[\tilde{\rho}] > \cH_{\cS}[\rho]$, a contradiction. Therefore any maximizer $\rho$ of $\cH_{\cS}$ is supported in $\overline{B(0;R_1)}$.
	
\end{proof}

Then we derive the Euler-Lagrange condition for maximizers of $\cH_\cS$, under the extra assumption that $\Delta W>0$ on $\mathbb{R}^d\backslash \{0\}$.

\begin{theorem}[Euler-Lagrange condition]\label{thm_ELH}
	Under the same assumptions as Theorem \ref{thm_Hexist}, further assume $W\in C^2(\mathbb{R}^d\backslash\{0\})$ with $\Delta W(\bx) > 0,\,\forall \bx\ne 0$.  If $\rho$ is a maximizer of $\cH_{\cS}$ in $\cM(\mathbb{R}^d)$, then $\supp\rho\subset \bar{\cS}$, and 
	\begin{equation}\label{thm_ELH_1}
		V[\rho] \le C_0,\quad \textnormal{on }\supp\rho\,,
	\end{equation}
	\begin{equation}\label{thm_ELH_2}
		V[\rho] \ge C_0,\quad \textnormal{a.e. }\cS\,,
	\end{equation}
	are satisfied for some $C_0\in\mathbb{R}$.
\end{theorem}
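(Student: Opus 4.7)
The plan is to set $C_0 := \cH_\cS[\rho] = \essinf_\cS V[\rho]$, which makes \eqref{thm_ELH_2} automatic. For both the inclusion $\supp\rho \subset \bar\cS$ and the upper bound \eqref{thm_ELH_1} I will argue by contradiction: assuming some $\bx_0 \in \supp\rho$ either lies outside $\bar\cS$ (case (a)) or satisfies $V[\rho](\bx_0) > C_0$ (case (b)), I will construct a perturbation $\rho'$ with $\cH_\cS[\rho'] > \cH_\cS[\rho]$. Using the lower-semicontinuity of $V[\rho]$, in case (a) one picks $r > 0$ with $B(\bx_0; 4r) \cap \bar\cS = \emptyset$, and in case (b) one picks $r, \delta > 0$ with $V[\rho] > C_0 + \delta$ on $B(\bx_0; 4r)$. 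Since $\bx_0 \in \supp\rho$, the measure $\tau := \rho|_{B(\bx_0; r)}$ has strictly positive mass.

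The perturbation I will use is the microscopic diffusion
\[
\rho' := \rho - \eta \tau + \eta(\tau * \phi_\epsilon), \qquad V[\rho'] = V[\rho] + \eta\bigl((W*\phi_\epsilon - W)*\tau\bigr),
\]
for small $\eta, \epsilon > 0$. By Lemma \ref{lem_posDelta2}, the assumption $\Delta W > 0$ on $\mathbb{R}^d \setminus \{0\}$ gives the strict mean-value inequality $(W*\phi_\epsilon)(\bx) > W(\bx)$ whenever $|\bx| > \epsilon$, so the integrand defining $(W*\phi_\epsilon - W)*\tau$ is strictly positive at every point of the far region $\{|\bz - \bx_0| > r + \epsilon\}$. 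Since $\rho$ is compactly supported by Theorem \ref{thm_Hexist}, $W \ge 0$ vanishes at infinity, and $U \to U_\infty > C_0$, the function $V[\rho]$ tends to $U_\infty$ at infinity, so one can pick $\alpha \in (C_0, U_\infty)$ so that $K := \{\bz \in \bar\cS : V[\rho](\bz) \le \alpha\}$ is compact. On the compact set $K \setminus B(\bx_0; 2r)$ I will then upgrade the pointwise strict improvement to a uniform one, $V[\rho'] \ge V[\rho] + \eta \gamma$ for some $\gamma > 0$, using continuity of $(W*\phi_\epsilon - W)*\tau$ away from $\supp\tau$. On $\cS \setminus (K \cup B(\bx_0; 2r))$ the bound $V[\rho'] \ge V[\rho] > \alpha > C_0$ is immediate. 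The last region $\cS \cap B(\bx_0; 2r)$ is empty in case (a), while in case (b) it lies in $B(\bx_0; 4r)$ where $V[\rho] > C_0 + \delta$; there I will use $W \ge 0$ to write $W * \tau \le W * \rho = V[\rho] - U$ and conclude
\[
V[\rho'] \ge (1-\eta) V[\rho] + \eta U \ge (1-\eta)(C_0 + \delta) + \eta \inf U,
\]
which exceeds $C_0$ for $\eta$ small, thanks to $\inf U > -\infty$ from \textbf{(U0)}. Choosing $\eta$ small enough to satisfy all three regional bounds simultaneously yields $\cH_\cS[\rho'] > C_0$, contradicting the maximality of $\rho$.

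The main obstacle I expect is producing the uniform lower bound $\gamma > 0$ on $K \setminus B(\bx_0; 2r)$: although the pointwise strict positivity of $(W*\phi_\epsilon - W)(\bz - \by)$ is immediate from strict subharmonicity combined with $W \in C^2(\mathbb{R}^d\setminus\{0\})$, upgrading it to a uniform positive lower bound for the convolution requires a continuity argument on this compact set, which must be suitably separated from $\supp\tau$. A minor subtlety to handle is that $V[\rho]$ and $W*\tau$ may be infinite at some points of $B(\bx_0; 2r)$, but at such points the inequality $\rho' \ge (1-\eta)\rho$ forces $V[\rho']$ also to be infinite, so the comparison $V[\rho'] > C_0$ is automatic there.
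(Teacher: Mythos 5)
Your proposal is correct and follows essentially the same strategy as the paper: microscopically diffuse a small fraction of the mass near the offending point of $\supp\rho$, use $\Delta W>0$ to get a strictly positive, uniform gain of the generated potential on the compact sublevel set where $V[\rho]$ can be close to its essential infimum (the paper's set $T_1$, your $K$), handle the far region by monotonicity and the near region by the $(1-\eta)V[\rho]+\eta U$ bound, and contradict maximality of $\cH_\cS$. The only cosmetic deviation is that you diffuse with a smooth mollifier $\phi_\epsilon$ rather than the normalized ball indicator of Lemma \ref{lem_posDelta2}, so the strict inequality $(W*\phi_\epsilon)(\bx)>W(\bx)$ for $|\bx|>\epsilon$ should be justified by writing the radially decreasing $\phi_\epsilon$ as a superposition of normalized ball indicators and invoking Lemma \ref{lem_posDelta}.
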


To prove this theorem, we start from a simple calculation for positive-Laplacian functions.
\begin{lemma}\label{lem_posDelta}
	Let $u$ be a $C^2$ function in a neighborhood of $\overline{B(\bx;\delta)}$ with $\Delta u \ge \epsilon>0$ on $\overline{B(\bx;\delta)}$. Then
	\begin{equation}
		\frac{1}{|B(\bx;\delta)|}\int_{B(\bx;\delta)}u(\by)\rd{\by} - u(\bx) \gtrsim \epsilon \delta^2\,,
	\end{equation}
	where the implied constant only depends on $d$.
\end{lemma}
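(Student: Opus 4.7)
The plan is to use a standard spherical-mean argument, which is the natural tool for estimating ball averages of functions with a controlled Laplacian. After translating to reduce to the case $\bx=0$, I would introduce the spherical mean $\bar u(r):=\frac{1}{|S^{d-1}|r^{d-1}}\int_{\partial B(0;r)}u\rd S$ and note that $\bar u(0)=u(0)$ by continuity.

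The key computation is the classical identity
\begin{equation*}
\bar u'(r)=\frac{1}{|S^{d-1}|r^{d-1}}\int_{\partial B(0;r)}\partial_\nu u\rd S=\frac{1}{|S^{d-1}|r^{d-1}}\int_{B(0;r)}\Delta u(\by)\rd\by,
\end{equation*}
obtained by differentiating under the integral (after pulling back to the unit sphere) and applying the divergence theorem. Using the hypothesis $\Delta u\ge\epsilon$ on $\overline{B(0;\delta)}$ gives $\bar u'(r)\ge \epsilon r/d$ for $0<r\le\delta$, and integrating from $0$ to $r$ yields $\bar u(r)-u(0)\ge \epsilon r^2/(2d)$.

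Finally, I would express the ball average as a radial integral of spherical means,
\begin{equation*}
\frac{1}{|B(0;\delta)|}\int_{B(0;\delta)}u(\by)\rd\by-u(0)=\frac{|S^{d-1}|}{|B(0;\delta)|}\int_0^\delta r^{d-1}\bigl(\bar u(r)-u(0)\bigr)\rd r,
\end{equation*}
and plug in the lower bound $\bar u(r)-u(0)\ge\epsilon r^2/(2d)$. A direct evaluation of the resulting integral, together with $|B(0;\delta)|=|S^{d-1}|\delta^d/d$, produces the bound $\epsilon\delta^2/(2(d+2))$, which is of the desired form with a constant depending only on $d$.

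There is really no hard step here; the only mild care needed is the justification of differentiating the spherical mean (standard under $C^2$ regularity in a neighborhood of $\overline{B(\bx;\delta)}$) and the handling of $r=0$ via continuity of $u$ so that $\bar u(0)=u(0)$. Everything else is a computation.
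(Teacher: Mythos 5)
Your proposal is correct and follows essentially the same argument as the paper: both introduce the spherical mean, differentiate it and apply the divergence theorem to get $\bar u'(r)\gtrsim \epsilon r$, integrate to bound $\bar u(r)-u(\bx)$ from below, and then average over radii to conclude. The only difference is that you track the explicit constant $\epsilon\delta^2/(2(d+2))$, which the paper leaves implicit.
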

\begin{proof}
	Define 
	\begin{equation}
		\Phi(s)  = \frac{1}{|\partial B(0;1)|}\int_{\partial B(0;1)}u(\bx+s\by)\rd{S(\by)}\,,
	\end{equation}
	for $0<s\le \delta$. Then $\lim_{s\rightarrow 0^+} \Phi(s) = u(\bx)$, and
	\begin{equation}\begin{split}
			\Phi'(s) = & \frac{1}{|\partial B(0;1)|}\int_{\partial B(0;1)}\nabla u(\bx+s\by)\cdot \by\rd{S(\by)} = \frac{1}{|\partial B(0;1)|}\int_{\partial B(0;1)}\nabla u(\bx+s\by)\cdot \vec{n}\rd{S(\by)} \\
			= & \frac{1}{|\partial B(0;1)|}\int_{B(0;1)}s \Delta u(\bx+s\by) \rd{\by} \gtrsim s\epsilon\,,
	\end{split}\end{equation}
	for $0<s< \delta$. This implies
	\begin{equation}
		\Phi(s) - u(\bx) \gtrsim s^2\epsilon\,.
	\end{equation}
	Therefore
	\begin{equation}\begin{split}
			\frac{1}{|B(\bx;\delta)|}\int_{B(\bx;\delta)}u(\by)\rd{\by} - u(\bx)= & \frac{1}{|B(\bx;\delta)|}\int_{B(\bx;\delta)}(u(\by) - u(\bx))\rd{\by}\\
			= & \frac{1}{|B(\bx;\delta)|}\int_0^\delta \int_{\partial B(0;1)}\big(u(\bx+r\by)-u(\bx)\big)\rd{S(\by)} r^{d-1}\rd{r} \\
			= & \frac{|\partial B(0;1)|}{|B(\bx;\delta)|}\int_0^\delta \big(\Phi(r)-u(\bx)\big)r^{d-1}\rd{r}  \\
			\gtrsim & \delta^{-d}\int_0^\delta r^2\epsilon r^{d-1}\rd{r} \gtrsim \epsilon \delta^2\,.
	\end{split}\end{equation}
	
\end{proof}

Then we introduce a local operation, called `microscopic diffusion' in \cite[Lemma 2.4]{SW21}, which distributes any mass point uniformly into a ball of radius $\delta$. We show that such an operation always increases the generated potential away from the support.
\begin{lemma}\label{lem_posDelta2}
	Assume $W\in C^2(\mathbb{R}^d\backslash\{0\})$ with $\Delta W(\bx) > 0,\,\forall \bx\ne 0$. Let $\sigma$ be a nonnegative measure supported in $\overline{B(\bx;\delta)}$, and define
	\begin{equation}
		\mu = -\sigma + \sigma * \frac{\chi_{B(0;\delta)}}{|B(0;\delta)|}\,,
	\end{equation}
	which is a mean-zero signed measure supported in $\overline{B(\bx;2\delta)}$. Then 
	\begin{equation}
		(W*\mu)(\by) \gtrsim   |\sigma|\delta^2\inf_{B(0;|\by-\bx|+2\delta)\backslash B(0;\delta)}\Delta W ,\quad \forall \by \textnormal{ with } |\by-\bx|\ge 3\delta\,,
	\end{equation}
	where the implied constant only depends on $d$.
\end{lemma}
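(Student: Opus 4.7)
The plan is to unfold the convolution $W*\mu$ and reduce to a pointwise application of Lemma \ref{lem_posDelta} at each point of $\supp \sigma$.

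First I would compute $(W*\mu)(\by)$ directly. Since $\sigma * \frac{\chi_{B(0;\delta)}}{|B(0;\delta)|}$ is obtained by averaging $\sigma$ with a ball kernel, a Fubini/change-of-variables calculation yields
\begin{equation*}
(W*\mu)(\by) = \int \left[\frac{1}{|B(\by-\bz;\delta)|}\int_{B(\by-\bz;\delta)} W(\bw)\rd{\bw} - W(\by-\bz)\right] \rd{\sigma(\bz)}.
\end{equation*}
So the task reduces to bounding the bracketed quantity from below for every $\bz$ in $\supp \sigma \subset \overline{B(\bx;\delta)}$.

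Next, I would check the geometric hypothesis needed to apply Lemma \ref{lem_posDelta} to $u=W$ on $B(\by-\bz;\delta)$. Since $|\by - \bx| \ge 3\delta$ and $|\bz - \bx| \le \delta$, the triangle inequality gives $|\by - \bz| \ge 2\delta$, so the closed ball $\overline{B(\by-\bz;\delta)}$ misses the origin and is contained in the annulus $B(0;|\by-\bx|+2\delta) \backslash B(0;\delta)$. Consequently $W$ is $C^2$ on a neighborhood of this ball, and
\begin{equation*}
\inf_{B(\by-\bz;\delta)} \Delta W \ge \inf_{B(0;|\by-\bx|+2\delta)\backslash B(0;\delta)} \Delta W =: \epsilon,
\end{equation*}
where $\epsilon > 0$ by the assumption $\Delta W > 0$ on $\mathbb{R}^d \backslash \{0\}$.

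Finally, applying Lemma \ref{lem_posDelta} at the point $\by - \bz$ with radius $\delta$ produces
\begin{equation*}
\frac{1}{|B(\by-\bz;\delta)|}\int_{B(\by-\bz;\delta)} W(\bw)\rd{\bw} - W(\by-\bz) \gtrsim \epsilon \delta^2,
\end{equation*}
with an implied constant depending only on $d$. Integrating against $\rd\sigma$ over $\overline{B(\bx;\delta)}$ gives the desired estimate, since $\sigma(\mathbb{R}^d) = |\sigma|$. The only subtle point is verifying the geometric inclusion and that the infimum of $\Delta W$ on each $B(\by-\bz;\delta)$ dominates the infimum on the larger annulus stated in the lemma; this is exactly what the $3\delta$ separation condition buys us, so I do not anticipate a serious obstacle in this proof.
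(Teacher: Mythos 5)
Your proposal is correct and follows essentially the same route as the paper: rewrite $W*\mu$ as the $\sigma$-integral of the difference between the ball-average of $W$ and its pointwise value, use the $3\delta$ separation to place each averaging ball inside the annulus $B(0;|\by-\bx|+2\delta)\backslash B(0;\delta)$, apply Lemma \ref{lem_posDelta}, and integrate against $\sigma$. No gaps.
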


\begin{proof}
	Take $\by$ with $|\by-\bx|\ge 3\delta$. Then
	\begin{equation}\begin{split}
			(W*\mu)(\by) = & -(W*\sigma)(\by) + \Big(\big(W*\frac{\chi_{B(0;\delta)}}{|B(0;\delta)|}\big)*\sigma\Big)(\by) \\
			= & \Big(\big(W*\frac{\chi_{B(0;\delta)}}{|B(0;\delta)|} - W\big) * \sigma\Big)(\by) \\
			= & \int_{\by-\overline{B(\bx;\delta)}}\Big(W*\frac{\chi_{B(0;\delta)}}{|B(0;\delta)|} - W\Big)(\bz)\sigma(\by-\bz)\rd{\bz}\,. \\
	\end{split}\end{equation}	
	Notice that for any $\bz\in \by-\overline{B(\bx;\delta)}$, we have $|\bz|\ge 2\delta$, and thus applying Lemma \ref{lem_posDelta} gives
	\begin{equation}
		\Big(W*\frac{\chi_{B(0;\delta)}}{|B(0;\delta)|}\Big)(\bz) - W(\bz) \gtrsim \delta^2\inf_{B(\bz;\delta)}\Delta W \,.
	\end{equation}
	Any point $\bz'$ in such $B(\bz;\delta)$ satisfies $|\bz'|<|\by-\bx|+2\delta$ and $|\bz'|>\delta$. Therefore, integrating the above inequality against $\sigma(\by-\bz)\rd{\bz}$ gives the conclusion.
\end{proof}

\begin{proof}[Proof of Theorem \ref{thm_ELH}]
	Without loss of generality, assume $\inf U=0$.
	
	Theorem \ref{thm_Hexist} gives that $\rho$ is compactly supported. By the assumptions $\lim_{|\bx|\rightarrow\infty}U(\bx)=U_\infty\in\mathbb{R}$ and $\lim_{|\bx|\rightarrow\infty}W(\bx)=0$, we have
	\begin{equation}
		\lim_{|\bx|\rightarrow\infty}V[\rho](\bx)=U_\infty\,.
	\end{equation} 
	Combining with the assumption $\cH_{\cS}[\rho] = \max_{\cM(\mathbb{R}^d)} \cH_\cS[\cdot] < U_\infty$, we see that the set
	\begin{equation}
		T_1:=\Big\{\by\in\cS: V[\rho](\by)<\frac{1}{2}(\cH_\cS[\rho]+U_\infty)\Big\}\,,
	\end{equation}
	has positive Lebesgue measure and compact support.
	
	We first prove $\supp\rho\subset \bar{\cS}$. Suppose not, then there exists $\bx$ such that $B(\bx;3\delta)\cap \cS=\emptyset$ for some $\delta>0$ and $\rho(B(\bx;\delta))>0$. Then define a probability measure
	\begin{equation}
		\tilde{\rho} = \rho -  \sigma + \sigma * \frac{\chi_{B(0;\delta)}}{|B(0;\delta)|},\quad \sigma = \rho\chi_{B(\bx;\delta)}\,.
	\end{equation}
	We may apply Lemma \ref{lem_posDelta2} to get that
	\begin{equation}
		V[\tilde{\rho}](\by)-V[\rho](\by) \gtrsim  \rho(B(\bx;\delta))\delta^2\inf_{B(0;|\by-\bx|+2\delta)\backslash B(0;\delta)}\Delta W\,,
	\end{equation}
	for any $\by$ with $|\by-\bx|\ge 3\delta$. In particular, this works for any $\by\in \cS$. Also, since $T_1\subset \cS$ has compact support, we get 
	\begin{equation}
		V[\tilde{\rho}](\by)-V[\rho](\by) \gtrsim  \rho(B(\bx;\delta))\delta^2\,,
	\end{equation}
	for any $\by\in T_1$, since the infimum of $\Delta W$ is taken on a compact subset of $\mathbb{R}^d\backslash \{0\}$ and thus strictly positive. Therefore we get $\cH_\cS[\tilde{\rho}] > \cH_\cS[\rho]$ which is a contradiction.
	
	Then we only need to prove that for any $\bx\in\supp\rho$, 
	\begin{equation} 
		|\{\by\in\cS: V[\rho](\by)<V[\rho](\bx)\}| = 0\,.
	\end{equation}
	In fact, if this is true, then we take $C_0=\essinf_{\cS} V[\rho]$ which satisfies \eqref{thm_ELH_1}\eqref{thm_ELH_2}.
	
	Suppose this is false for some $\bx\in\supp\rho$. Then there exists $\epsilon>0$ such that
	\begin{equation}
		T_2:=\{\by\in\cS: V[\rho](\by)<V[\rho](\bx)-\epsilon\}\,,
	\end{equation}
	has positive Lebesgue measure. This implies that 
	\begin{equation}\label{Vrhoeps}
		V[\rho](\bx) > \epsilon\,,
	\end{equation}
	because $V[\rho]$ is nonnegative due to the assumptions $W\ge 0$ (assumed in Theorem \ref{thm_Hexist}) and $U\ge 0$ (since $\inf U=0$ is assumed at the beginning of the proof).  It is clear that one of $T_1$ or $T_2$ contains the other, and thus 
	\begin{equation}
		T:=T_1\cap T_2 = \Big\{\by\in\cS: V[\rho](\by)<\min\big\{V[\rho](\bx)-\epsilon,\frac{1}{2}(\cH_\cS[\rho]+U_\infty)\big\}\Big\}\,,
	\end{equation}
	has positive Lebesgue measure and compact support.

	Since $V[\rho]$ is lower-semicontinuous, there exists $\delta>0$ such that 
	\begin{equation}
		V[\rho](\by)>V[\rho](\bx)-\frac{\epsilon}{4},\quad \forall \by\in B(\bx;3\delta)\,.
	\end{equation}
	In particular, $B(\bx;\delta)\cap T =\emptyset$. 
	
	Since $\bx\in\supp\rho$, we have $\rho(B(\bx;\delta))>0$. Then we define a probability measure
	\begin{equation}
		\tilde{\rho} = \rho -  \sigma + \sigma * \frac{\chi_{B(0;\delta)}}{|B(0;\delta)|},\quad \sigma = \alpha\rho\chi_{B(\bx;\delta)}\,,
	\end{equation}
	where $0<\alpha<1$ is a parameter to be determined.
	
	First, for any $\by\in B(\bx;3\delta)$, using  $W\ge 0$  and $U\ge 0$, we have
	\begin{equation}
		V[\tilde{\rho}](\by) \ge V[\rho](\by) - \alpha \big(W*(\rho\chi_{B(\bx;\delta)})\big)(\by) \ge (1-\alpha)V[\rho](\by) > (1-\alpha)\big(V[\rho](\bx)-\frac{\epsilon}{4}\big)\,,
	\end{equation}
	which is greater than $V[\rho](\bx)-\frac{\epsilon}{2}$ if we choose
	\begin{equation}
		\alpha = \frac{\frac{\epsilon}{4}}{V[\rho](\bx)-\frac{\epsilon}{4}}\,,
	\end{equation}
	(where the denominator is at least $\frac{3\epsilon}{4}$ due to \eqref{Vrhoeps}).
	
	Then, for any $\by\notin B(\bx;3\delta)$, we may apply Lemma \ref{lem_posDelta2}, with the compact support property of $T$, to get
	\begin{equation}
		V[\tilde{\rho}](\by)-V[\rho](\by) > 0\,,
	\end{equation}
	for any $\by\notin B(\bx;3\delta)$, and
	\begin{equation}
		V[\tilde{\rho}](\by)-V[\rho](\by) \gtrsim  \rho(B(\bx;\delta))\delta^2\,,
	\end{equation}
	for any $\by\in T$. Therefore we get $\cH_\cS[\tilde{\rho}] > \cH_\cS[\rho]$ which is a contradiction.
\end{proof}

\section{Proof of the main results}\label{sec_main}

\begin{lemma}\label{lem_rhosharp}
	Assume $W$ satisfies the assumptions in Theorems \ref{thm_Hexist} and \ref{thm_ELH}, and $W$ is CSPD. Assume $\rho_\sharp\in\cM(\mathbb{R}^d)$ is a compactly supported locally integrable function, $\supp\rho_\sharp=\bar{\cS}$ for some open set $\cS$ with $\partial \cS$ having zero Lebesgue measure, and $W*\rho_\sharp$ is continuous on $\mathbb{R}^d$. Then, for any $\rho\in\cM(\mathbb{R}^d)$, we have
	\begin{equation}
		\essinf_{\cS}(W*\rho-W*\rho_\sharp) \le 0\,.
	\end{equation}
\end{lemma}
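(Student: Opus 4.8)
The plan is to recast the inequality as a statement about the maximum of a height functional and to invoke the existence and Euler--Lagrange theory of Section~\ref{sec_H}, together with the CSPD property. Suppose, for contradiction, that $\essinf_\cS(W*\rho_0-W*\rho_\sharp)=c>0$ for some $\rho_0\in\cM(\mathbb{R}^d)$. Set $U_\sharp:=-W*\rho_\sharp$. Since $W\ge 0$ and $W*\rho_\sharp$ is continuous, and since $\rho_\sharp$ has compact support while $W(\bx)\to 0$ as $|\bx|\to\infty$, the function $W*\rho_\sharp$ is continuous, nonnegative, bounded, and vanishes at infinity; hence $U_\sharp$ satisfies {\bf (U0)} with $\lim_{|\bx|\to\infty}U_\sharp(\bx)=0$. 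For $\rho\in\cM(\mathbb{R}^d)$ write $\cH^\sharp_\cS[\rho]:=\essinf_\cS(W*\rho+U_\sharp)=\essinf_\cS(W*\rho-W*\rho_\sharp)$. Then $\cH^\sharp_\cS[\rho_\sharp]=0$ while $\cH^\sharp_\cS[\rho_0]=c>0$, so $\sup_\rho\cH^\sharp_\cS[\rho]\ge c>0$.

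Next I upgrade this to the existence of a maximizer. The set $\cS$ is bounded (as $\rho_\sharp$ is compactly supported), and for every $\rho$ one has $\int_\cS(W*\rho)\rd\bx=\int_{\mathbb{R}^d}\big(\int_{\cS-\by}W(\bz)\rd\bz\big)\rd\rho(\by)\le C_W$, where $C_W:=\sup_{\bw}\int_{B(\bw;\diam\cS)}W<\infty$ because $W$ is locally integrable and tends to $0$ at infinity; since $W*\rho_\sharp\ge 0$, this gives $\cH^\sharp_\cS[\rho]\le\essinf_\cS(W*\rho)\le C_W/|\cS|$, hence $\sup_\rho\cH^\sharp_\cS[\rho]<\infty$. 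Because $\cH^\sharp_\cS[\rho]$ depends only on $U_\sharp$ restricted to $\cS$, I may replace $U_\sharp$ by a continuous $\tilde U_\sharp$ agreeing with $-W*\rho_\sharp$ on a bounded open neighborhood of $\bar\cS$ and equal to a constant $\tilde U_\infty$ for $|\bx|$ large, chosen with $\tilde U_\infty>\sup_\rho\cH^\sharp_\cS[\rho]$. Then $(W,\tilde U_\sharp)$ satisfies the hypotheses of Theorems~\ref{thm_Hexist} and \ref{thm_ELH} and $\sup_\rho\cH^\sharp_\cS[\rho]<\tilde U_\infty$, so Theorem~\ref{thm_Hexist} produces a compactly supported maximizer $\rho^*$ of $\cH^\sharp_\cS$, and Theorem~\ref{thm_ELH} gives $\supp\rho^*\subset\bar\cS$ together with, for $C_0^*:=\cH^\sharp_\cS[\rho^*]=\sup_\rho\cH^\sharp_\cS[\rho]\ge c>0$,
\[
W*\rho^*-W*\rho_\sharp\le C_0^*\ \text{ on }\supp\rho^*,\qquad W*\rho^*-W*\rho_\sharp\ge C_0^*\ \text{ a.e. on }\cS,
\]
using that $\tilde U_\sharp=-W*\rho_\sharp$ on a neighborhood of $\bar\cS$ containing both $\supp\rho^*$ and $\cS$.

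I then derive a contradiction using CSPD. Put $B:=\int(W*\rho_\sharp)\rd\rho^*=\int(W*\rho^*)\rd\rho_\sharp$ (equal by Tonelli and the symmetry of $W$); the quantities $E_W[\rho_\sharp]$, $E_W[\rho^*]$, $B$ are all finite, since $W*\rho_\sharp$ is bounded, $W*\rho^*$ is bounded on $\supp\rho^*$ by the first displayed inequality, and $\rho_\sharp,\rho^*$ are compactly supported. Integrating the first inequality against $\rho^*$ gives $2E_W[\rho^*]\le B+C_0^*$; since $\rho_\sharp$ is absolutely continuous with $\rho_\sharp(\partial\cS)=0$ it is concentrated on $\cS$, so integrating the second inequality against $\rho_\sharp$ gives $B\ge 2E_W[\rho_\sharp]+C_0^*$; and applying CSPD to the mean-zero signed measure $\rho^*-\rho_\sharp$ (which has finite energy because $|\rho^*-\rho_\sharp|\le\rho^*+\rho_\sharp$) gives $0\le E_W[\rho^*-\rho_\sharp]=E_W[\rho^*]-B+E_W[\rho_\sharp]$, with equality only if $\rho^*=\rho_\sharp$. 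Combining the three relations forces $E_W[\rho^*]=E_W[\rho_\sharp]+C_0^*$ and $B=E_W[\rho^*]+E_W[\rho_\sharp]$, hence $\rho^*=\rho_\sharp$; but then $C_0^*=\cH^\sharp_\cS[\rho^*]=\cH^\sharp_\cS[\rho_\sharp]=0$, contradicting $C_0^*\ge c>0$.

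The main obstacle is the mismatch between the strict threshold demanded by Theorem~\ref{thm_Hexist} and the fact that here the relevant supremum equals the value at infinity of $U_\sharp$; this is handled by enlarging the external potential far from the bounded set $\cS$, which leaves $\cH^\sharp_\cS$ unchanged. After that, the remaining work is the finite-energy bookkeeping needed so that the equality case of CSPD, fed by the two-sided Euler--Lagrange bounds of Theorem~\ref{thm_ELH}, can identify the maximizer as $\rho_\sharp$ itself.
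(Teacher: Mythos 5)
Your proposal is correct, and its scaffolding is the same as the paper's: you introduce the auxiliary external potential $-W*\rho_\sharp$, modify it away from the bounded set $\cS$ so that the strict threshold of Theorem \ref{thm_Hexist} holds (exactly the device the paper uses, with its $C_\sharp+1$ value at infinity), obtain a compactly supported maximizer of the associated height functional, and apply Theorem \ref{thm_ELH} with the observation that the Euler--Lagrange constant can be taken to be $\essinf_{\cS}V_\sharp[\rho^*]$. Where you genuinely diverge is the endgame: the paper identifies the maximizer with $\rho_\sharp$ by first invoking Lemma \ref{lem_suf} to characterize $\rho_\sharp$ as the unique minimizer of $E_{W,U_\sharp}$ in $\cM(\bar{\cS})$ and then running a linear-interpolation derivative plus strict convexity argument, whereas you never use Lemma \ref{lem_suf} or the energy-minimization framing at all: you integrate the two Euler--Lagrange inequalities against $\rho^*$ and against $\rho_\sharp$, respectively, and combine with the CSPD identity $E_W[\rho^*-\rho_\sharp]=E_W[\rho^*]+E_W[\rho_\sharp]-B\ge 0$ to squeeze out $E_W[\rho^*-\rho_\sharp]=0$, hence $\rho^*=\rho_\sharp$ and $C_0^*=0$. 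This is a more self-contained and slightly more elementary closing step (the finiteness bookkeeping and the use of absolute continuity of $\rho_\sharp$ to upgrade the a.e.-on-$\cS$ inequality to a $\rho_\sharp$-a.e. statement are exactly the same subtlety the paper handles), while the paper's route has the advantage of isolating the reusable statement that $\rho_\sharp$ minimizes $E_{W,U_\sharp}$ on $\cM(\bar{\cS})$; both are sound.
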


\begin{proof}
	We first claim that $\essinf_{\cS}W*\rho$ is bounded from above, say, by $C_\sharp$ for any $\rho\in\cM(\mathbb{R}^d)$. In fact, since $\bar{\cS}$ is compact, $W$ is locally integrable with $\lim_{|\bx|\rightarrow\infty}W(\bx)=0$, we see that  $\int_{\cS}W(\bx-\by)\rd{\bx} \le C_W$ for any $\by\in\mathbb{R}^d$ where $C_W>0$ is independent of $\by$. Therefore, integrating against $\rd{\rho(\by)}$, we get $\int_{\cS}(W*\rho)(\bx)\rd{\bx} \le C_W$, and it follows that $\essinf_{\cS}(W*\rho)\le \frac{C_W}{|\cS|}=:C_\sharp$.
	
	Define
	\begin{equation}
		U_\sharp(\bx) = \left\{\begin{split}
			& -(W*\rho_\sharp)(\bx),\quad \bx\in\bar{\cS} \\
			& C_\sharp + 1 ,\quad \text{for sufficiently large }|\bx| \\
			& \text{extended continuously},\quad \text{otherwise}
		\end{split}\right.\,,
	\end{equation}
	which is continuous and satisfies $\lim_{|\bx|\rightarrow\infty}U_\sharp(\bx) = C_\sharp + 1$. Denote
	\begin{equation}
		V_\sharp[\rho] = W*\rho + U_\sharp,\quad \cH_{\cS,\sharp}[\rho] = \essinf_{\cS} V_\sharp[\rho] \,.
	\end{equation} 
	We verify that
	\begin{equation}\begin{split}
		\sup_{\rho\in\cM(\mathbb{R}^d)} \cH_{\cS,\sharp}[\rho] = & 	\sup_{\rho\in\cM(\mathbb{R}^d)} \essinf_{\cS} (W*\rho + U_\sharp) = \sup_{\rho\in\cM(\mathbb{R}^d)} \essinf_{\cS} (W*\rho -W*\rho_\sharp)  \\
		\le & \sup_{\rho\in\cM(\mathbb{R}^d)}\essinf_{\cS}(W*\rho) \le C_\sharp\,.
	\end{split}\end{equation}
	Therefore we may apply Theorem \ref{thm_Hexist} with external potential $U_\sharp$ to get the existence of a maximizer of $\cH_{\cS,\sharp}$, call it $\rho_0$. Applying Theorem \ref{thm_ELH} gives
	\begin{equation}\label{ELrho0}
		\supp\rho_0\subset \bar{\cS}\,; \qquad V_\sharp[\rho_0]\le C_0,\quad \textnormal{on }\supp\rho_0\,;\qquad V_\sharp[\rho_0]\ge C_0,\quad \textnormal{a.e. }\cS\,,
	\end{equation}
	for some $C_0$.
	
	We claim that $\rho_0=\rho_\sharp$. In fact, since $V_\sharp[\rho_\sharp]=0$ on $\bar{\cS}$ by definition, Lemma \ref{lem_suf} shows that $\rho_\sharp$ is the unique minimizer of $E_{W,U_\sharp}$ in $\cM(\bar{\cS})$. Then denote the linear interpolation between $\rho_0$ and $\rho_\sharp$ as
	\begin{equation}
		\rho_{t} = (1-t)\rho_0 + t \rho_\sharp\,,
	\end{equation}
	for $0\le t \le 1$. Then, since $\supp\rho_0\subset \bar{\cS}$,
	\begin{equation}
		\frac{\rd}{\rd{t}}\Big|_{t=0}E_{W,U_\sharp}[\rho_{t}] = \int_{\bar{\cS}} V_\sharp[\rho_0]\rho_\sharp\rd{\bx} - \int_{\bar{\cS}} V_\sharp[\rho_0]\rho_0\rd{\bx}\,.
	\end{equation}
	Since $V_\sharp[\rho_0]\le C_0$ on $\supp\rho$, we see that $\int_{\bar{\cS}} V_\sharp[\rho_0]\rho_0\rd{\bx} \le C_0$.  Since $V_\sharp[\rho_0]\ge C_0,\, \textnormal{a.e. }\cS$ and $\partial S$ has zero Lebesgue measure, we get $V_\sharp[\rho_0]\ge C_0,\, \textnormal{a.e. }\bar{\cS}$. Then we get $\int_{\bar{\cS}} V_\sharp[\rho_0]\rho_\sharp\rd{\bx} \ge C_0$ since $\rho_\sharp$ is assumed to be a locally integrable function. Therefore we get 
	\begin{equation}
		\frac{\rd}{\rd{t}}\Big|_{t=0}E_{W,U_\sharp}[\rho_{t}] \ge 0\,.
	\end{equation}
	Due to the CSPD property of $W$ and Lemma \ref{lem_CSPD}, if $\rho_0\ne \rho_\sharp$, we would get $E_{W,U_\sharp}[\rho_\sharp] > E_{W,U_\sharp}[\rho_0]$, contradicting the fact that $\rho_\sharp$ is the unique minimizer of $E_{W,U_\sharp}$. Therefore $\rho_0=\rho_\sharp$. 
	
	As a consequence, for any $\rho\in\cM(\mathbb{R}^d)$, we have $\cH_{\cS,\sharp}[\rho] \le \cH_{\cS,\sharp}[\rho_\sharp] = 0$, i.e.,
	\begin{equation}
		\essinf_{\cS}(W*\rho-W*\rho_\sharp) \le 0\,.
	\end{equation}
	
\end{proof}

\begin{remark}
	We point out a subtlety in the part of the above proof concerning $\rho_0=\rho_\sharp$. Generally speaking, when $W$ is CSPD, if $D$ is a closed subset of $\mathbb{R}^d$, a condition like 
	\begin{equation}
		V[\rho_0]\le C_0,\quad \textnormal{on }\supp\rho_0\,;\qquad V[\rho_0]\ge C_0,\quad \textnormal{a.e. }D\,,
	\end{equation}
	for a probability measure $\rho_0\in\cM(D)$ may not be sufficient to guarantee that $\rho_0$ is the minimizer of $E$ in $\cM(D)$. In fact, when doing a direct linear interpolation argument with the unique minimizer (denoted $\rho_\infty$), one may not reach a contradiction if $\rho_\infty$ has positive mass in the set $\{V[\rho_0]<C_0\}$ which has Lebesgue measure zero. A mollification argument like the proof of \cite[Theorem 2.4]{CS21} does not apply in general, due to the domain constraint. In the proof of Lemma \ref{lem_rhosharp} we can derive $\rho_0=\rho_\sharp$ as the energy minimizer because we already know that the minimizer $\rho_\sharp$ is a locally integrable function.
\end{remark}

\begin{proof}[Proof of Theorem \ref{thm_main}]
	Let $D$ be the underlying domain as described in Theorem \ref{thm_main}. Lemma \ref{lem_convexpL} gives the SPD property of $W$. Then $W$ satisfies all the assumptions of Lemma \ref{lem_rhosharp} (not identically 0, $W\ge 0$, $\lim_{|\bx|\rightarrow\infty}W(\bx)=0$, $W\in C^2(\mathbb{R}^d\backslash\{0\})$ with $\Delta W(\bx) > 0,\,\forall \bx\ne 0$, $W$ is CSPD), and so does $\rho_\sharp$. We apply Lemma \ref{lem_rhosharp} to get
	\begin{equation}
		\essinf_{\cS}(V[\rho]-V[\rho_\sharp]) = \essinf_{\cS}(W*\rho-W*\rho_\sharp)\le 0\,,
	\end{equation}
	for any $\rho\in\cM(\mathbb{R}^d)$. Combining with \eqref{thm_main_1}, we see that
	\begin{equation}
		\cH_\cS[\rho]=\essinf_{\cS}V[\rho] \le \sup_{\cS}V[\rho_\sharp] < U_\infty\,,
	\end{equation}
	for any $\rho\in\cM(\mathbb{R}^d)$. Then, denoting $\cD$ as the interior of $D$, we have $\cS\subset \cD$ and thus
	\begin{equation}
		\sup_{\rho\in\cM(\mathbb{R}^d)} \cH_{\cD}[\rho] \le \sup_{\rho\in\cM(\mathbb{R}^d)} \cH_{\cS}[\rho] < U_\infty\,.
	\end{equation}
	In all the assumed cases, we have $D=\bar{\cD}$, and $D\backslash \cD$ has zero Lebesgue measure.
	
	Then, applying Theorem \ref{thm_Hexist} (with the $\cS$ therein being $\cD$) gives that there exists a maximizer $\rho_\infty$ of $\cH_{\cD}$ in $\cM(\mathbb{R}^d)$ which is compactly supported.
	
	Finally, applying Theorem \ref{thm_ELH} gives that $\supp\rho_\infty\subset \bar{\cD}=D$, and
	\begin{equation}
		V[\rho_\infty] \le C_0,\quad \textnormal{on }\supp\rho_\infty,\qquad V[\rho_\infty] \ge C_0,\quad \textnormal{a.e. }\cD\,.
	\end{equation}
	We further have $V[\rho_\infty] \ge C_0,\, \textnormal{a.e. }D$ since $D\backslash \cD$ has zero Lebesgue measure. Then, in view of the {\bf (W1)} assumption and the continuity of $U$, we may apply Lemma \ref{lem_suf} (for the case $D=\mathbb{R}^d$) or Lemma \ref{lem_sufhalf} below (for the other cases) to get that $\rho_\infty$ is the unique minimizer of $E_{W,U}$ in $\cM(D)$.
	
\end{proof}

\begin{lemma}\label{lem_sufhalf}
	Under the same assumptions as Lemma \ref{lem_SCuni}, further assume {\bf (W1)} and $U$ is continuous. Suppose $D$ is given as
	\begin{itemize}
		\item $D=[x_0,\infty)$ for some $x_0\in\mathbb{R}$, if $d=1$;
		\item $D=\{\bx=(x_1,\dots,x_d)\in\mathbb{R}^d: x_d \ge \Phi(\hat{\bx})\}$, where $\hat{\bx}:=(x_1,\dots,x_{d-1})$ and $\Phi:\mathbb{R}^{d-1}\rightarrow \mathbb{R}$ is a continuous function, if $d=2$.
	\end{itemize}
	Then \eqref{lem_EL_1} implies that $\rho$ is the unique minimizer of $E_{W,U}$ in $\cM(D)$.
\end{lemma}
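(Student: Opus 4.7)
The plan is to argue by contradiction and adapt the mollification argument at the end of Lemma \ref{lem_suf}, adding a small translation to push the mollified competitor back inside $D$. Suppose $\rho_1\in\cM(D)$ satisfies $E_{W,U}[\rho_1]<E_{W,U}[\rho]$. A truncation step (Lemma \ref{lem_trunres}) reduces to the case where $\rho_1$ has compact support in $D$. The difficulty compared to the $D=\mathbb{R}^d$ case of Lemma \ref{lem_suf} is that $\rho_1*\phi_\epsilon$ may have mass slightly outside $D$, so \cite[Lemma 2.5]{CS21} does not directly produce a competitor in $\cM(D)$.

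To fix this, I would define a translation $\delta(\epsilon)\to 0^+$ that pushes $\rho_1*\phi_\epsilon$ back into $D$. If $d=1$ and $D=[x_0,\infty)$, take $\delta(\epsilon)=\epsilon$ and let $\tilde{\rho}_1^{(\epsilon)}$ be $\rho_1*\phi_\epsilon$ shifted to the right by $\epsilon$. If $d=2$ and $D=\{x_2\ge \Phi(x_1)\}$, fix a compact interval $K\subset\mathbb{R}$ containing an $\epsilon$-neighborhood of the $x_1$-projection of $\supp\rho_1$, let $\omega_\Phi$ be the modulus of continuity of $\Phi$ on $K$, and set $\delta(\epsilon)=\epsilon+\omega_\Phi(\epsilon)$. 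Define $\tilde{\rho}_1^{(\epsilon)}(\bx)=(\rho_1*\phi_\epsilon)(\bx-(0,\delta(\epsilon)))$. A direct check using $x_2\ge \Phi(x_1)$ for $\bx\in\supp\rho_1$, together with $|z_2|\le \epsilon$ and $|\Phi(x_1+z_1)-\Phi(x_1)|\le \omega_\Phi(\epsilon)$ for $|z_1|\le\epsilon$, shows $\supp\tilde{\rho}_1^{(\epsilon)}\subset D$.

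Then I would pass to the limit as $\epsilon\to 0$. Since $E_W$ is translation invariant and $\rho_1$ has compact support, \cite[Lemma 2.5]{CS21} gives $E_W[\tilde{\rho}_1^{(\epsilon)}]=E_W[\rho_1*\phi_\epsilon]\to E_W[\rho_1]$. The supports of all $\tilde{\rho}_1^{(\epsilon)}$ for small $\epsilon$ lie in a common compact set, $\tilde{\rho}_1^{(\epsilon)}$ converges weakly to $\rho_1$, and $U$ is continuous, so $\int U\rd{\tilde{\rho}_1^{(\epsilon)}}\to \int U\rd{\rho_1}$. Hence for some small $\epsilon_0$ the function $\tilde{\rho}_1:=\tilde{\rho}_1^{(\epsilon_0)}$ is a smooth compactly supported element of $\cM(D)$ with $E_{W,U}[\tilde{\rho}_1]<E_{W,U}[\rho]$.

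Finally, since $\tilde{\rho}_1$ is absolutely continuous with respect to Lebesgue measure and supported in $D$, the condition $V[\rho]\ge C_0$ a.e.\ on $D$ integrates to $\int V[\rho]\rd{\tilde{\rho}_1}\ge C_0$, while $\int V[\rho]\rd{\rho}\le C_0$ follows from $V[\rho]\le C_0$ on $\supp\rho$. The one-sided derivative computation from the first part of Lemma \ref{lem_suf} then yields $\frac{\rd}{\rd t}\Big|_{t=0}E_{W,U}[(1-t)\rho+t\tilde{\rho}_1]\ge 0$, and strict convexity (from CSPD via Lemma \ref{lem_CSPD}) forces $E_{W,U}[\tilde{\rho}_1]>E_{W,U}[\rho]$, contradicting the choice of $\tilde{\rho}_1$. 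The main obstacle is the second step: choosing $\delta(\epsilon)$ correctly for a merely continuous $\Phi$, which is why $\omega_\Phi$ enters and why the argument uses continuity (not only measurability) of $\Phi$ and $U$ in an essential way.
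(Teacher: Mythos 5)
Your proposal is correct and follows essentially the same route as the paper: contradiction, truncation to compact support, mollification, a vertical translation by $\epsilon+\omega(\epsilon)$ controlled by the modulus of continuity of $\Phi$ to restore $\supp\tilde{\rho}_1^{(\epsilon)}\subset D$, and then the one-sided-derivative plus strict-convexity argument, which applies because the competitor is absolutely continuous so the a.e.\ condition in \eqref{lem_EL_1} suffices. The only cosmetic difference is that the paper bounds the change in the $U$-part via the modulus of continuity of $U$ directly, whereas you use weak convergence on a common compact set together with continuity of $U$; these are equivalent here.
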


\begin{proof}
	We will treat the case $d\ge 2$, and the case $d=1$ can be handled similarly.
	
	Assume the contrary that $\rho_1\in\cM(D)$ satisfies $E_{W,U}[\rho_1] < E_{W,U}[\rho]$. By truncation, we may assume without loss of generality that $\rho_1$ is compactly supported. Since $U$ is continuous in a neighborhood of $S$ and $\Phi$ is continuous, we may take a modulus of continuity $\omega(\epsilon)$, satisfying $\lim_{\epsilon\rightarrow0}\omega(\epsilon)=0$ and
	\begin{equation}\label{moc}
		|U(\bx)-U(\by)| \le \omega(|\bx-\by|),\quad |\Phi(\hat{\bx})
		-\Phi(\hat{\by})| \le \omega(|\hat{\bx}-\hat{\by}|)\,,
	\end{equation}
	whenever $\dist(\bx,\supp\rho_1)$ and $\dist(\by,\supp\rho_1)$ are sufficiently small. We denote
	\begin{equation}
		\alpha := E_{W,U}[\rho] - E_{W,U}[\rho_1] > 0\,.
	\end{equation}
	
	Let $\phi_\epsilon$ be a mollifier with $\epsilon>0$ small, and denote $\rho_{1,\epsilon}=\rho_1*\phi_\epsilon$. Similar to the proof of Lemma \ref{lem_suf}, we get that 
	\begin{equation}\label{alp3}
		E_{W,U}[\rho_{1,\epsilon}] \le E_{W,U}[\rho_1] + \frac{\alpha}{3}\,,
	\end{equation}
	for sufficiently small $\epsilon$. Then we claim that for sufficiently small $\epsilon$ we have
	\begin{equation}
		\supp\rho_{1,\epsilon} \subset D_{\epsilon+\omega(\epsilon)}\,,
	\end{equation}
	where
	\begin{equation}
		D_{\beta}:=\{\bx=(x_1,\dots,x_d)\in\mathbb{R}^d: x_d \ge \Phi(\hat{\bx})-\beta\}\,,
	\end{equation}
	is an expansion of $D$ in the $x_d$ direction. To see this, we notice that any point $\bx\in \supp\rho_{1,\epsilon}$ satisfies that $\dist(\bx,\supp\rho_1) \le \epsilon$. Therefore \eqref{moc} applies to any $\bx,\by\in\supp\rho_{1,\epsilon}$ when $\epsilon$ is small. For any $\bx\in\supp\rho_{1,\epsilon}$, we can find $\by\in\supp\rho_1\subset D$ with $|\bx-\by|\le \epsilon$, which leads to
	\begin{equation}
		|\hat{\bx}-\hat{\by}|^2 + (x_d-y_d)^2 \le \epsilon^2,\quad y_d \ge \Phi(\hat{\by})\,.
	\end{equation}
	Then we see that $|\hat{\bx}-\hat{\by}| \le \epsilon$, which implies $|\Phi(\hat{\bx})
	-\Phi(\hat{\by})| \le \omega(\epsilon)$ by \eqref{moc}. We also have $|x_d-y_d|\le \epsilon$, and thus
	\begin{equation}
		x_d \ge y_d-\epsilon \ge  \Phi(\hat{\by})-\epsilon \ge \Phi(\hat{\bx}) -  \omega(\epsilon)-\epsilon\,,
	\end{equation}
	and the claim is proved.
	
	The claim implies that 
	\begin{equation}
		\rho_{2,\epsilon}(\hat{\bx},x_d):=\rho_{1,\epsilon}\big(\hat{\bx},x_d-(\epsilon+\omega(\epsilon))\big)\,,
	\end{equation}
	satisfy that $\supp\rho_{2,\epsilon}\subset D$. Also, $\rho_{2,\epsilon}$ is a translation of $\rho_{1,\epsilon}$, the $W$ parts of their energies are the same. Then, using \eqref{moc} for the $U$ parts of their energies, we have the estimate
	\begin{equation}
		|E_{W,U}[\rho_{2,\epsilon}]-E_{W,U}[\rho_{1,\epsilon}]| \le \omega\big(\epsilon+\omega(\epsilon)\big)\,.
	\end{equation}
	Since $\lim_{\epsilon\rightarrow0}\omega(\epsilon)=0$, the last quantity can be made less than $\frac{\alpha}{3}$ for sufficiently small $\epsilon$. Combining with \eqref{alp3}, this gives $E_{W,U}[\rho_{2,\epsilon}] < E_{W,U}[\rho]$. Then we may proceed as in the proof of Lemma \ref{lem_suf}, with $\rho_{2,\epsilon}$ in place of $\tilde{\rho}_1$, to get the conclusion.
	
\end{proof} 

\begin{proof}[Proof of Proposition \ref{prop_nece}]
	First, since $W*\rho_\infty$ and $U$ are continuous, we see that $V[\rho_\infty]=W*\rho_\infty + U$ is continuous. Therefore the Euler-Lagrange condition gives that $V[\rho_\infty]=C_0$ on $\supp\rho_\infty$. It is easy to see that $\liminf_{|\bx|\rightarrow\infty}V[\rho](\bx) = U_\infty$ for any $\rho\in\cM(\mathbb{R}^d)$. Then, by the Euler-Lagrange condition and the assumption \eqref{prop_nece_1}, we see that $\rho_\infty$ is compactly supported.
	
	Let $\phi_\epsilon$ be a mollifier, and define $\rho_{\infty,\epsilon}=\rho_\infty*\phi_\epsilon$. Then
	\begin{equation}
		\supp\rho_{\infty,\epsilon} = \bar{\cS}_\epsilon\,,
	\end{equation}
	where $\cS_\epsilon$ is the bounded open set
	\begin{equation}
		\cS_\epsilon = \{\bx\in\mathbb{R}^d:\dist(\bx,\supp\rho_\infty)<\epsilon\}\,.
	\end{equation}
	Notice that
	\begin{equation}
		V[\rho_{\infty,\epsilon}] = W*\rho_\infty * \phi_\epsilon + U\,,
	\end{equation}
	in which $W*\rho_\infty$ and $U$ are continuous. Then, since we have the Euler-Lagrange condition $V[\rho_\infty]=W*\rho_\infty + U=C_0$ on $\supp\rho_\infty$, it is easy to see that
	\begin{equation}
		\lim_{\epsilon\rightarrow0}\sup_{\supp\rho_{\infty,\epsilon}}V[\rho_{\infty,\epsilon}] = C_0\,.
	\end{equation}
	Therefore, there exists $\epsilon_0>0$ such that for any $0<\epsilon<\epsilon_0$, $\rho_{\infty,\epsilon}$ satisfies the condition  \eqref{thm_main_1} for $\rho_\sharp$.
	
	Then we find $\epsilon\in (0,\epsilon_0)$ so that $|\partial \cS_\epsilon|=0$. To do this, notice that the function $t\mapsto |\cS_t|$ is increasing on $(0,\epsilon_0)$. By a rational number argument, there are at most countably many $t$ values such that $\lim_{\tau\rightarrow t^+} |\cS_\tau| > |\cS_t|$. Therefore, we may pick $\epsilon\in (0,\epsilon_0)$ so that $\lim_{\tau\rightarrow \epsilon^+} |\cS_\tau| = |\cS_\epsilon|$, which implies $|\partial \cS_\epsilon|=0$ since $\partial \cS_\epsilon = \bigcap_{\tau>\epsilon}(\cS_\tau\backslash \cS_\epsilon)$. For this $\epsilon$, if $D=\mathbb{R}^d$, then it is clear that $\rho_{\infty,\epsilon}$ satisfies all the assumptions for $\rho_\sharp$ in Theorem \ref{thm_main}. 
	
	If $D$ is a curved half-space as described in Theorem \ref{thm_main}, then we cannot take $\rho_\sharp=\rho_{\infty,\epsilon}$ because $\supp\rho_{\infty,\epsilon}$ may not be inside $D$. To handle this difficulty, we first notice that such $\epsilon$ can be chosen as small as we want. Then, we imitate the proof of Lemma \ref{lem_sufhalf} and take $\rho_\sharp(\hat{\bx},x_d)=\rho_{\infty,\epsilon}(\hat{\bx},x_d-(\epsilon+\omega(\epsilon)))$, so that $\supp\rho_\sharp$ lies in $D$. The regularity conditions are satisfied because $\rho_\sharp$ is a translation of $\rho_{\infty,\epsilon}$. Furthermore,
	\begin{equation}
		\Big|V[\rho_\sharp](\hat{\bx},x_d) - V[\rho_{\infty,\epsilon}]\big(\hat{\bx},x_d-(\epsilon+\omega(\epsilon))\big)\Big| \le \omega\big(\epsilon+\omega(\epsilon)\big)\,,
	\end{equation}
	which implies
	\begin{equation}
		\sup_{\supp\rho_\sharp}V[\rho_\sharp] \le \sup_{\supp\rho_{\infty,\epsilon}}V[\rho_{\infty,\epsilon}] + \omega\big(\epsilon+\omega(\epsilon)\big)\,.
	\end{equation}
	Therefore, by taking $\epsilon$ sufficiently small, the assumption \eqref{thm_main_1} is also satisfied.

\end{proof}

To prove Theorem \ref{thm_counter}, we first give a sufficient condition for non-existence of minimizers.

\begin{lemma}\label{lem_nonex}
	Assume $W(\bx) = -\frac{|\bx|^b}{b}$ with $-d<b<0$. Assume {\bf (U0)} and  $\lim_{|\bx|\rightarrow\infty}U(\bx)=0$. Suppose there exists a compactly supported $\rho\in\cM(\mathbb{R}^d)$ such that
	\begin{equation}\label{lem_nonex_1}
		V[\rho] = 0,\quad \rho\textnormal{-a.e.}\,;\qquad V[\rho] \ge 0,\quad\textnormal{a.e. }\mathbb{R}^d\,.
	\end{equation}
	Then $E_{W,\alpha U}$ does not admit a minimizer for any $0<\alpha<1$.
\end{lemma}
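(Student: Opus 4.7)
The strategy is to pin down the infimum value of $E_{W,\alpha U}$ explicitly and show that it is strictly smaller than the value at any probability measure, hence unattained. For an arbitrary competitor $\sigma\in\cM(\mathbb{R}^d)$ with $E_W[\sigma]<\infty$ (the complementary case being trivial), I set $\mu:=\sigma-\alpha\rho$, a signed measure of total mass $1-\alpha$, and expand $E_W[\sigma]=E_W[\alpha\rho+\mu]$ bilinearly. Incorporating the external term $\alpha\int U\rd\sigma$ and using the consequence $\int U\rd\rho=-2E_W[\rho]$ of integrating $V[\rho]=0$ against $\rho$, I arrive at the central identity
\begin{equation*}
E_{W,\alpha U}[\sigma] \;=\; -\alpha^2 E_W[\rho] + \alpha\int V[\rho]\rd\sigma + E_W[\mu].
\end{equation*}

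The next step is to argue that the last two terms are nonnegative, with at least one strictly positive. The Riesz kernel $-|\bx|^b/b$ has strictly positive Fourier transform (a multiple of $|\xi|^{-(b+d)}$), so it is SPD; since $\mu(\mathbb{R}^d)=1-\alpha\neq 0$, the signed measure $\mu$ is nontrivial and hence $E_W[\mu]>0$ strictly (with $E_W[|\mu|]<\infty$ following from Cauchy--Schwarz for the positive-definite kernel). For the middle term, I would invoke the refined Frostman inequality for Riesz potentials (Remark \ref{rmk_EL}, \cite[Theorem 2.1]{DOSW}), which upgrades the hypothesis $V[\rho]\ge 0$ a.e.\ to $V[\rho]\ge 0$ quasi-everywhere; since any $\sigma$ of finite $E_W$ energy cannot charge sets of zero Riesz capacity, this gives $\int V[\rho]\rd\sigma\ge 0$. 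Combining these, every admissible $\sigma$ satisfies $E_{W,\alpha U}[\sigma]>-\alpha^2 E_W[\rho]$.

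To confirm that $-\alpha^2 E_W[\rho]$ is the infimum, I would produce the explicit minimizing sequence $\sigma_n=\alpha\rho+(1-\alpha)\tau_n$, where $\tau_n$ is the normalized uniform measure on a ball $B(\bz_n;R_n)$ with $|\bz_n|\to\infty$ and $R_n\to\infty$ but $R_n/|\bz_n|\to 0$. Then the self-energy $E_W[\tau_n]$ is of order $R_n^b\to 0$, the cross term $\int (W*\rho)\rd\tau_n\to 0$ because $W$ vanishes at infinity and $\supp\rho$ is compact, and $\int U\rd\tau_n\to 0$ by the tail assumption on $U$; substituting into the identity yields $E_{W,\alpha U}[\sigma_n]\to -\alpha^2 E_W[\rho]$. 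Non-attainment of this sharp infimum forces the non-existence of a minimizer.

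The main obstacle is the Lebesgue-a.e.\ versus quasi-everywhere gap in the use of $V[\rho]\ge 0$: a competitor $\sigma$ could in principle concentrate on the (Lebesgue-null) failure set and thereby spoil the inequality $\int V[\rho]\rd\sigma\ge 0$. The specifically Riesz-potential fact that finite-$E_W$-energy measures are absolutely continuous with respect to Riesz capacity, combined with the quasi-everywhere form of Frostman's theorem, is precisely what closes this gap and makes the strict inequality above valid uniformly across all admissible competitors.
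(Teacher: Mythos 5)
Your overall strategy is essentially the paper's: both arguments compare a putative minimizer $\sigma$ with the sub-probability measure $\alpha\rho$, arrive at the decomposition $E_{W,\alpha U}[\sigma]=E_{W,\alpha U}[\alpha\rho]+\alpha\int V[\rho]\rd{\sigma}+E_W[\sigma-\alpha\rho]$ (your central identity, since $E_{W,\alpha U}[\alpha\rho]=-\alpha^2E_W[\rho]$), use a Frostman-type inequality to control the linear term, and use strict positive definiteness together with $\mu(\mathbb{R}^d)=1-\alpha\neq 0$ to make the quadratic term strictly positive. Your dilute-blob minimizing sequence is the same device the paper uses to show that a minimizer would have to satisfy $E_{W,\alpha U}[\rho_*]\le E_{W,\alpha U}[\alpha\rho]$.

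The genuine gap is in the linear term. The hypothesis gives $V[\rho]\ge 0$ only Lebesgue-a.e., and you assert that this upgrades to a quasi-everywhere inequality by appealing to Remark \ref{rmk_EL} and \cite[Theorem 2.1]{DOSW}. That theorem states an Euler--Lagrange condition in q.e.\ form; it does not convert an a.e.-Lebesgue inequality into a q.e.\ one, and no such conversion is available under {\bf (U0)} alone: a Lebesgue-null set can carry positive Riesz capacity, and since $U$ is only lower semicontinuous, $V[\rho]$ can genuinely drop below $0$ on a Lebesgue-null, positive-capacity set (e.g.\ modify $U$ downward on a closed $(d-1)$-dimensional set), which a finite-energy competitor may charge. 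The fact you invoke --- that finite-energy measures do not charge capacity-zero sets --- only helps once the failure set is known to be capacity-null, which is exactly what is missing. The paper closes this gap differently: it mollifies and truncates the competitor, so that $\int V_\alpha[\alpha\rho]\,\rho_{*,\epsilon,R}\rd{\bx}\ge 0$ follows directly from the a.e.\ inequality because $\rho_{*,\epsilon,R}$ is absolutely continuous, and then passes to the limit while retaining a uniform positive lower bound on the quadratic term via the Fourier representation (the function $\alpha\hat{\rho}-\hat{\rho}_*$ is continuous and equals $\alpha-1\neq 0$ at $\xi=0$) together with Fatou's lemma and \cite[Lemma 2.5]{CS21}. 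Replacing your direct use of $\int V[\rho]\rd{\sigma}\ge 0$ with this mollification-and-limit step would repair the argument; as written, that step does not follow from the stated hypotheses.
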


\begin{proof}
	
Denote
\begin{equation}
	V_\alpha[\cdot] = W*(\cdot) + \alpha U\,,
\end{equation}
as the generated potential for $(W,\alpha U)$. Then we have
\begin{equation}
	V_\alpha[\alpha \rho] = W*(\alpha \rho) + \alpha U = \alpha V[\rho]\,,
\end{equation}
and thus
\begin{equation}\label{ValphaEL}
	V_\alpha[\alpha \rho] = 0,\quad \rho\textnormal{-a.e.}\,;\qquad V_\alpha[\alpha \rho] \ge 0,\quad\textnormal{a.e. }\mathbb{R}^d\,.
\end{equation}

Suppose $\rho_*\in\cM(\mathbb{R}^d)$ is a minimizer of $E_{W,\alpha U}$. Then we have 
\begin{equation}\label{Erhost}
	E_{W,\alpha U}[\rho_*] \le E_{W,\alpha U}[\alpha \rho]\,,
\end{equation} 
(notice here that $\alpha\rho$ has total mass $\alpha<1$) because otherwise the $E_{W,\alpha U}$ energy of $\alpha \rho + \frac{1-\alpha}{|B(0;R)|}\chi_{B(0;R)} \in \cM(\mathbb{R}^d)$ with sufficiently large $R$ would be less than $E_{W,\alpha U}[\rho_*]$, a contradiction.

For $R>0$, define $\rho_{*,\epsilon,R}=(\rho_*\cdot\chi_{B(0;R)}) *\phi_\epsilon$ where $\phi_\epsilon$ is a mollifier. Define the linear interpolation
\begin{equation}
	\rho_t = (1-t)\alpha \rho + t \rho_{*,\epsilon,R},\quad 0\le t \le 1\,.
\end{equation}
\cite[Theorem 3.4]{Shu_convex} shows that $W$ is Fourier representable at level 0. Then, following the proof of \cite[Theorem 3.10]{Shu_convex}, we get $E_{W,\alpha U}[\alpha \rho+\rho_{*,\epsilon,R}] < \infty$, which justifies
\begin{equation}
	\frac{\rd^2}{\rd t^2} E_{W,\alpha U}[\rho_t] = 2E_W[\alpha\rho-\rho_{*,\epsilon,R}] = \int_{\mathbb{R}^d} \hat{W}(\xi)|\alpha\hat{\rho}(\xi)-\hat{\rho}_{*,\epsilon,R}(\xi)|^2\rd{\xi} \,.
\end{equation}
Also notice that
\begin{equation}
	\frac{\rd}{\rd t}\Big|_{t=0} E_{W,\alpha U}[\rho_t] = \int_{\mathbb{R}^d} V_\alpha[\alpha \rho]\rho_{*,\epsilon,R}\rd{\bx} \ge 0\,,
\end{equation}
by \eqref{ValphaEL} and the continuity of $\rho_{*,\epsilon,R}$. Therefore
\begin{equation}
	E_{W,\alpha U}[\rho_{*,\epsilon,R}] - E_{W,\alpha U}[\alpha \rho] \ge \frac{1}{2}\int_{\mathbb{R}^d} \hat{W}(\xi)|\alpha\hat{\rho}(\xi)-\hat{\rho}_{*,\epsilon,R}(\xi)|^2\rd{\xi}\,.
\end{equation}
Sending $\epsilon\rightarrow 0$ and $R\rightarrow \infty$, $\alpha\hat{\rho}-\hat{\rho}_{*,\epsilon,R}$ converges to $\alpha\hat{\rho}-\hat{\rho}_*$ uniformly on compact sets, where the latter is continuous and not identically zero. Combining with $\hat{W}(\xi) = c|\xi|^{-d-b},\,c>0$, we see that there exists $c_*>0$ such that $E_{W,\alpha U}[\rho_{*,\epsilon,R}]- E_{W,\alpha U}[\alpha \rho]\ge c_*$ for any $\epsilon$ sufficiently small and $R$ sufficiently large. Then taking limit (using \cite[Lemma 2.5]{CS21} to treat the mollification) gives $E_{W,\alpha U}[\rho_{*}]- E_{W,\alpha U}[\alpha \rho]\ge c_*$, contradicting \eqref{Erhost}.
	
\end{proof}

This proof indeed works for any $W$ that is {\bf (W0)}{\bf (W1)}, Fourier representable at level 0, and $\hat{W}$ is continuous on $\mathbb{R}^d\backslash \{0\}$ and strictly positive.

Then, Theorem \ref{thm_counter} is a direct consequence of the following corollary.
\begin{corollary}\label{cor_counter}
	Assume $d\ge 3$ and $W(\bx) = -\frac{|\bx|^b}{b}$ with $2-d<b<0$. Take a fixed mollifier $\phi$. Let
	\begin{equation}
		U = -W*\phi\,.
	\end{equation}
	Then, for $\alpha<1$ sufficiently close to 1, there exists a smooth and radial $\rho_\sharp\in\cM(\mathbb{R}^d)$ supported on a ball $\overline{B(0;R)}$ with $\sup_{B(0;R)} (W*\rho_\sharp+\alpha U) < 0$, but $E_{W,\alpha U}$ does not admit a minimizer.
\end{corollary}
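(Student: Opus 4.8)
The plan is to establish the two claims of Corollary~\ref{cor_counter} separately. The non-existence of a minimizer follows at once from Lemma~\ref{lem_nonex}, while the construction of $\rho_\sharp$ is the substantive part and is where the assumption $2-d<b<0$ (equivalently $0<s<d-2$ with $s:=-b$) is actually used. Throughout write $c:=-1/b>0$, so $W(\bx)=c|\bx|^b=c|\bx|^{-s}$. With $U=-W*\phi$, the mollifier $\phi$ is itself a compactly supported probability measure with $V[\phi]=W*\phi+U\equiv 0$, so \eqref{lem_nonex_1} holds (with equality everywhere); since $W>0$ off the origin, $W*\phi$ is smooth, positive, bounded and $\to0$ at infinity, so $U$ satisfies \textbf{(U0)} and $\lim_{|\bx|\to\infty}U(\bx)=0$. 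Hence Lemma~\ref{lem_nonex} shows $E_{W,\alpha U}$ has no minimizer for any $\alpha\in(0,1)$.

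For $R>2$ take $\rho_\sharp=\rho_\sharp^{(R)}:=\bigl(\tfrac1{|B(0;R-1)|}\chi_{B(0;R-1)}\bigr)*\phi_1$, a smooth radial probability density with $\supp\rho_\sharp=\overline{B(0;R)}$. Because $0<s<d-2$, the distributional Laplacian $\Delta W=(s+2-d)|\bx|^{-s-2}$ is a locally integrable \emph{negative} function on all of $\mathbb{R}^d$, so $W$ (and $W*\phi$, $W*\rho_\sharp$, and the potential $W*u_R$ of the uniform probability $u_R:=\tfrac1{|B(0;R)|}\chi_{B(0;R)}$) is superharmonic on $\mathbb{R}^d$; being radial, each is radially nonincreasing, with $\max W*u_R=(W*u_R)(0)=\tfrac{d}{d+b}cR^b$. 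Moreover $(W*u_R)(\bx)=R^b(W*u_1)(\bx/R)$, and the substitution $\bz=\by/t$ gives, for $t\in(0,1]$ and any unit vector $p$,
\begin{equation*}
	\frac{t^{\,s}}{c}\,(W*u_1)(tp)\;=\;\frac1{|B(0;1/t)|}\int_{B(0;1/t)}|\bz-p|^{-s}\rd\bz\;\le\;\frac1{|B(0;1)|}\int_{B(0;1)}|\bz-p|^{-s}\rd\bz\;=:\;\theta',
\end{equation*}
the inequality being the monotonicity in the radius of ball-averages of the superharmonic function $\bz\mapsto|\bz-p|^{-s}$, and $\theta'<1$ since that function is strictly superharmonic and takes the value $|p|^{-s}=1$ at the centre of $B(0;1)$. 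With $t=|\bx|/R$ this is the key estimate $(W*u_R)(\bx)\le\theta'\,W(\bx)$ for $0<|\bx|\le R$.

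Next I compare $W*\rho_\sharp$ with $W*\phi$. Since $(W*\phi)(\bx)=c|\bx|^b(1+O(1/|\bx|))$ as $|\bx|\to\infty$ (Taylor expanding $|\bx-\by|^{-s}$ in $\by$), for small $\varepsilon>0$ there is $M$ with $(W*\phi)(\bx)\ge(1-\varepsilon)c|\bx|^b$ for $|\bx|\ge M$, while $W*\phi$ is radially decreasing and positive; choose $\varepsilon,M$ so that $\alpha_0:=\theta'(M/(M+1))^b/(1-\varepsilon)<1$ (possible as this tends to $\theta'<1$ when $\varepsilon\to0$), and fix any $\alpha\in(\alpha_0,1)$. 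Writing $W*\rho_\sharp=(W*u_{R-1})*\phi_1$ and using the radial monotonicity of $W*u_{R-1}$ to absorb the mollification shift gives $(W*\rho_\sharp)(\bx)\le(W*u_{R-1})(\max(0,|\bx|-1))$; the key estimate then yields $(W*\rho_\sharp)(\bx)\le\alpha_0(W*\phi)(\bx)$ for $M+1\le|\bx|\le R$, and for $|\bx|\le M+1$ one has $(W*\rho_\sharp)(\bx)\le\tfrac{d}{d+b}c(R-1)^b\le\tfrac\alpha2(W*\phi)(M+1)\le\tfrac\alpha2(W*\phi)(\bx)$ once $R$ is large (as $b<0$). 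Hence $W*\rho_\sharp\le(\alpha-\delta)(W*\phi)$ on $\overline{B(0;R)}$ for a fixed $\delta>0$, so
\begin{equation*}
	\sup_{B(0;R)}V[\rho_\sharp]\;=\;\sup_{B(0;R)}\bigl(W*\rho_\sharp-\alpha\,W*\phi\bigr)\;\le\;-\delta\,(W*\phi)(R)\;<\;0 .
\end{equation*}
Together with the non-existence statement this proves Corollary~\ref{cor_counter}, hence Theorem~\ref{thm_counter} (whose external potential is $\alpha U$, smooth, radial, vanishing at infinity).

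The only real difficulty is the $R$-independent bound $(W*u_R)\le\theta'W$ on $B(0;R)$ with $\theta'<1$: this is precisely where superharmonicity of the Riesz kernel $|\bx|^{-s}$ on \emph{all} of $\mathbb{R}^d$ — i.e.\ $s<d-2$ — enters, through the scaling identity for $W*u_1$. Everything else (matching $W*u_R$ against $W*\phi$ in the inner and outer regions, and the harmless mollification) is routine estimation, and the precise shape of the fixed mollifier $\phi$ plays no role beyond $V[\phi]\equiv0$.
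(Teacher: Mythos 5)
Your proof is correct, and the first half (non-existence) is exactly the paper's argument: apply Lemma \ref{lem_nonex} with $\rho=\phi$, since $U=-W*\phi$ makes $V[\phi]\equiv 0$. Where you genuinely diverge is the construction of $\rho_\sharp$. The paper stays on a fixed small ball: it takes $\rho_\sharp=\phi_2$ (the mollifier dilated by a factor $2$), differentiates the dilation family $t\mapsto W*\phi_t$, and uses $\Delta W<0$ to get a uniform negative derivative on $\overline{B(0;2)}$, hence a fixed gap $c>0$ with $W*\phi_2\le W*\phi-c$ there; the threshold is then $\alpha\ge 1-\tfrac{c}{2\sup(W*\phi)}$. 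You instead take $\rho_\sharp$ to be a mollified uniform density on a large ball and prove the $R$-independent ratio bound $W*u_R\le\theta' W$ on $B(0;R)$ with $\theta'<1$, via the scaling identity $(W*u_R)(\bx)=R^b(W*u_1)(\bx/R)$ and the monotonicity in the radius of ball averages of the superharmonic kernel $|\bz-p|^{-s}$ (this is where $b>2-d$, i.e.\ $s<d-2$, enters — strict superharmonicity on the unit ball gives $\theta'<1$, which would fail at $s=d-2$); matching this against the asymptotics $(W*\phi)(\bx)\ge(1-\varepsilon)c|\bx|^b$ in the outer region and the $O(R^b)$ smallness of $\max(W*\rho_\sharp)$ in the inner region yields $W*\rho_\sharp\le(\alpha-\delta)(W*\phi)$ on the ball, hence the strict negativity. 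Both routes ultimately rest on superharmonicity of $W$, but the paper's is shorter and more local (a derivative computation on a fixed compact set), while yours produces an explicit structural threshold $\alpha_0$ in terms of the ball-average constant $\theta'$ of the Riesz kernel and a family of admissible $\rho_\sharp$ supported on arbitrarily large balls; the extra ingredients you use (radial superharmonic functions are radially nonincreasing, spherical/ball means of superharmonic functions decrease in the radius, local integrability of $|\bz-p|^{-s}$ with the singularity on the boundary) are standard and your quantifiers match the statement, so the argument stands.
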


\begin{proof}
By Lemma \ref{lem_nonex} with $\rho=\phi\in\cM(\mathbb{R}^d)$, it is clear that $V[\rho]=0$ on $\mathbb{R}^d$, satisfying \eqref{lem_nonex_1}, and thus $E_{W,\alpha U}$ does not admit a minimizer for any $0<\alpha<1$.

Then we denote
\begin{equation}
	\phi_t(\bx) = \frac{1}{t^d}\phi\big(\frac{\bx}{t}\big) \in \cM(\mathbb{R}^d)\,,
\end{equation}
for $t>0$, and calculate
\begin{equation}
	(W*\phi_t)(\bx) = \int_{\mathbb{R}^d} W(\bx-\by)\frac{1}{t^d}\phi\big(\frac{\by}{t}\big)\rd{\by} = \int_{\mathbb{R}^d} W(\bx-t\by)\phi(\by)\rd{\by}\,.
\end{equation}
Taking $t$-derivative and noticing that $\phi(\bx)=\varphi(|\bx|)$ is a radial function, we get
\begin{equation}\begin{split}
	\frac{\rd}{\rd{t}}(W*\phi_t)(\bx) = & -\int_{\mathbb{R}^d} \nabla W(\bx-t\by)\cdot\by\phi(\by)\rd{\by} \\
	= & -\int_0^1 \int_{S^{d-1}} \nabla W(\bx-t r \bz)\cdot r \bz \rd{S(\bz)} \,\varphi(r)r^{d-1}\rd{r} \\
	= & -\int_0^1 \int_{B(0;1)} \nabla_{\bz}\cdot\big(\nabla W(\bx-t r \bz)\big)\rd{\bz} \,\varphi(r)r^{d}\rd{r} \\
	= & \int_0^1 \int_{B(0;1)} \Delta W(\bx-t r \bz)\rd{\bz}\, \varphi(r)t r^{d+1}\rd{r}\,, \\
\end{split}\end{equation}
by using the spherical coordinates $\by=r \bz$, and noticing that the divergence theorem in the third equality is justified since the $\Delta W(\bx-t r \bz)$ is locally integrable in $\bz$. Since $\Delta W<0$, we see that the last quantity is negative, and there exists $c>0$ such that
\begin{equation}
	\frac{\rd}{\rd{t}}(W*\phi_t)(\bx)\le -c,\quad \forall 1\le t \le 2,\,|\bx| \le 2\,.
\end{equation} 
Therefore, taking $\rho_\sharp=\phi_2$, we see that $\rho_\sharp$ satisfies the regularity assumptions as desired, and (in view of $\phi_1=\phi$)
\begin{equation}
	W*\rho_\sharp+\alpha U \le W*\phi+\alpha U - c = (1-\alpha)(W*\phi) - c \le -\frac{c}{2} < 0,\quad \text{on }\overline{B(0;2)}=\supp\rho_\sharp\,,
\end{equation}
provided that
\begin{equation}
	\alpha \ge 1 - \frac{c}{2\sup(W*\phi)}\,.
\end{equation}
Then we see that $\sup_{B(0;2)} (W*\rho_\sharp+\alpha U)\le -\frac{c}{2}  < 0$.
\end{proof}

\section{Proof of complimentary results}\label{sec_compli}

\subsection{Proof of Theorem \ref{thm_exist1}}\label{sec_exist1}

We first give a simple lemma on a truncation and rescaling argument.
\begin{lemma}\label{lem_trunres}
	Assume {\bf (W0)}, {\bf (U0)}. Assume $\rho\in\cM(\mathbb{R}^d)$ with $E_{W,U}[\rho]<\infty$. Then
	\begin{equation}
		\lim_{R\rightarrow\infty} E_{W,U}\Big[\frac{1}{\rho(B(0;R))}\rho \chi_{B(0;R)}\Big] = E_{W,U}[\rho]\,.
	\end{equation}
\end{lemma}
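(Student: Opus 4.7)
The plan is to explicitly expand $E_{W,U}[\rho_R]$ where $\rho_R := \frac{1}{m_R}\rho\chi_{B(0;R)}$ and $m_R := \rho(B(0;R))$, then pass to the limit termwise using monotone convergence.

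First I would reduce to the case where both $W$ and $U$ are nonnegative. Since {\bf (W0)} and {\bf (U0)} provide constants $c_W, c_U$ with $W\ge c_W$ and $U\ge c_U$, replacing $W$ by $W-c_W$ and $U$ by $U-c_U$ changes $E_{W,U}[\sigma]$ by the fixed amount $\tfrac{1}{2}c_W\sigma(\mathbb{R}^d)^2 + c_U\sigma(\mathbb{R}^d)$. Since $\rho$ and all $\rho_R$ are probability measures, both sides of the claimed identity are shifted by exactly the same constant, so we may assume $W,U\ge 0$.

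Next I would write
\begin{equation}
E_{W,U}[\rho_R] = \frac{1}{2m_R^2}\iint_{B(0;R)\times B(0;R)} W(\bx-\by)\rd\rho(\by)\rd\rho(\bx) + \frac{1}{m_R}\int_{B(0;R)} U(\bx)\rd\rho(\bx)\,.
\end{equation}
Clearly $m_R\to 1$ as $R\to\infty$. Because $W$ and $U$ are now nonnegative, the integrands $W(\bx-\by)\chi_{B(0;R)\times B(0;R)}$ and $U(\bx)\chi_{B(0;R)}$ increase pointwise to $W(\bx-\by)$ and $U(\bx)$ respectively as $R\to\infty$. The monotone convergence theorem then gives
\begin{equation}
\lim_{R\to\infty}\iint_{B(0;R)^2} W(\bx-\by)\rd\rho(\by)\rd\rho(\bx) = \iint_{\mathbb{R}^d\times\mathbb{R}^d}W(\bx-\by)\rd\rho(\by)\rd\rho(\bx)\,,
\end{equation}
and similarly for the $U$-integral. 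The assumption $E_{W,U}[\rho]<\infty$ ensures that both limits are finite, hence the ratios $1/m_R^2$ and $1/m_R$ tend to $1$ without introducing any indeterminate form, and combining the two limits yields $E_{W,U}[\rho_R]\to E_{W,U}[\rho]$.

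There is essentially no obstacle here; the only thing to be slightly careful about is the simultaneous reduction by additive constants in both $W$ and $U$, and the fact that $m_R>0$ for all sufficiently large $R$ (which follows from $\rho$ being a probability measure, so $m_R\to 1$ and in particular $m_R$ is eventually bounded away from $0$). Everything else is a direct application of monotone convergence on the product measure $\rho\otimes\rho$ together with dominated convergence is not even needed.
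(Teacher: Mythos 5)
Your proof is correct. The reduction to $W,U\ge 0$ is legitimate (both $\rho$ and each truncated-and-renormalized measure are probability measures, so the additive shift in the energy is the same fixed constant on both sides), the truncated integrands are indeed nondecreasing in $R$ since the balls are nested and the integrands are nonnegative, and monotone convergence on the finite product measure $\rho\otimes\rho$ applies because $W$ is Borel (lower semicontinuous). The paper's proof shares your normalization $\inf W=\inf U=0$ and uses the same monotonicity observation to get the upper bound $E_{W,U}[\rho_R]\le m_R^{-2}E_{W,U}[\rho]$, but for the matching lower bound it instead invokes lower semicontinuity of $E_{W,U}$ under weak convergence of measures (noting $\rho_R\rightharpoonup\rho$), so it is a two-sided squeeze. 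Your single application of monotone convergence delivers both inequalities at once and avoids having to justify weak lower semicontinuity of the energy for merely lower semicontinuous, bounded-below kernels; this makes your argument slightly more self-contained, at the (negligible) cost of writing out the energy termwise. Both arguments are equally valid; no gap.
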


\begin{proof}
By adding constants to $U$ and $W$, we may assume $\inf W = \inf U = 0$ without loss of generality.

First, since $\lim_{R\rightarrow\infty}\rho(B(0;R))=\rho(\mathbb{R}^d)=1$, we see that $\rho(B(0;R))>0$ for sufficiently large $R$. Then, using $\rho(B(0;R)) \le 1$, we get
\begin{equation}
	E_{W,U}\Big[\frac{1}{\rho(B(0;R))}\rho \chi_{B(0;R)}\Big] \le \frac{1}{\rho(B(0;R))^2}E_{W,U}[\rho\chi_{B(0;R)}]\le \frac{1}{\rho(B(0;R))^2}E_{W,U}[\rho]\,.
\end{equation}
We also have
\begin{equation}
	\liminf_{R\rightarrow\infty}E_{W,U}\Big[\frac{1}{\rho(B(0;R))}\rho \chi_{B(0;R)}\Big] \ge E_{W,U}[\rho]\,,
\end{equation}
by the lower-semicontinuity of $E_{W,U}$ with respect to weak convergence of measures. The conclusion follows.
\end{proof}

\begin{proof}[Proof of Theorem \ref{thm_exist1}]
	
By adding constants to $U$ and $W$ (which leaves \eqref{thm_exist1_1} invariant), we may assume $\inf W = \inf U = 0$ without loss of generality. 

We start by noticing that Lemma \ref{lem_trunres} and the assumption \eqref{thm_exist1_1} implies 
\begin{equation}\label{rhoReps}
	\inf_{\rho\in\cM(\overline{B(0;R)})}E_{W,U}[\rho] \le \epsilon+\inf_{\rho\in\cM(\mathbb{R}^d)}E_{W,U}[\rho],\quad \epsilon:=\frac{1}{4}\Big(U_\infty-2\inf_{\rho\in\cM(\mathbb{R}^d)}E_{W,U}[\rho]\Big)>0\,,
\end{equation}
if $R$ is sufficiently large (say, $R\ge R_0$).

We claim that there exists $R_*\ge R_0$ such that $\supp\rho_R \subset \overline{B(0;R_*)}$ for any $R\ge R_0$ and any minimizer $\rho_R$ of $E_{W,U}$ in $\cM(\overline{B(0;R)})$. To see this, we recall part of the Euler-Lagrange condition \eqref{lem_EL_2} for $\rho_R$, which takes the form
\begin{equation}\label{rhoREL}
	V[\rho_R] = C_0,\quad \rho_R\textnormal{-a.e. }\,,
\end{equation}
where $C_0$ depends on $R$ and $\rho_R$. Take any point $\bx_0\in \supp\rho_R$, then $\rho_R(B(\bx_0;1))>0$, and therefore there exists $\bx_1\in B(\bx_0;1)$ such that 
\begin{equation}\label{VrhoR}
	V[\rho_R](\bx_1) = (W*\rho_R)(\bx_1) + U(\bx_1) = C_0\,.
\end{equation}
Then we notice that
\begin{equation}\begin{split}
		E_{W,U}[\rho_R] = & \int_{\overline{B(0;R)}} \Big(\frac{1}{2}(W*\rho_R)(\bx) + U(\bx)\Big) \rd{\rho_R(\bx)} \\
		\ge & \int_{\overline{B(0;R)}} \Big(\frac{1}{2}(W*\rho_R)(\bx) + \frac{1}{2}U(\bx)\Big) \rd{\rho_R(\bx)} \\
		= & \frac{1}{2}\int_{\overline{B(0;R)}} V[\rho_R](\bx) \rd{\rho_R(\bx)} = \frac{1}{2}C_0\,,\\
\end{split}\end{equation}
where we used $U\ge 0$ and \eqref{rhoREL}. Therefore, using $W\ge 0$, \eqref{VrhoR} and \eqref{rhoReps}, we get
\begin{equation}
	U(\bx_1) \le C_0 \le 2E_{W,U}[\rho_R] = 2\inf_{\rho\in\cM(\overline{B(0;R)})}E_{W,U}[\rho] \le 2\inf_{\rho\in\cM(\mathbb{R}^d)}E_{W,U}[\rho] + 2\epsilon\,.
\end{equation}
Since 
\begin{equation}
	\lim_{|\bx|\rightarrow\infty}U(\bx) = U_\infty = 2\inf_{\rho\in\cM(\mathbb{R}^d)}E_{W,U}[\rho] + 4\epsilon\,,
\end{equation}
there exists $R_*>R_0$ such that $U(\bx) > 2\inf_{\rho\in\cM(\mathbb{R}^d)}E_{W,U}[\rho] + 3\epsilon$ for any $\bx$ with $|\bx|\ge R_*-1$. Then we see that $|\bx_1 | <R_*-1$, which implies $|\bx_0|<R_*$. This proves the claim.

Notice that this claim is also true if one replaces $\rho_R$ with a minimizer of $E_{W,U}$ in $\cM(\mathbb{R}^d)$. In other words, any minimizer of $E_{W,U}$ in $\cM(\mathbb{R}^d)$, if there exists any, is supported in $\overline{B(0;R_*)}$.

The existence of minimizer of $E_{W,U}$ in $\cM(\overline{B(0;R)})$ can be easily established using the compactness of $\overline{B(0;R)}$ and the lower-semicontinuity of $E_{W,U}$ with respect to weak convergence, as in \cite[Lemma 2.2]{CCP15}. Then we will prove that $\rho_{R_*}$, a minimizer of $E_{W,U}$ in $\cM(\overline{B(0;R_*)})$, is a minimizer of $E_{W,U}$ in $\cM(\mathbb{R}^d)$. In fact, we have proved that any minimizer of $E_{W,U}$ in $\cM(\overline{B(0;R)})$ with $R\ge R_*$ is supported in $\overline{B(0;R_*)}$. This implies 
\begin{equation}
	\inf_{\cM(\overline{B(0;R)})} E_{W,U} = \inf_{\cM(\overline{B(0;R_*)})} E_{W,U}\,,
\end{equation}
for any $R\ge R_*$. Then notice that
\begin{equation}
	\inf_{\cM(\mathbb{R}^d)} E_{W,U} = \lim_{R\rightarrow\infty}\inf_{\cM(\overline{B(0;R)})} E_{W,U}\,,
\end{equation}
due to Lemma \ref{lem_trunres}. It follows that $E_{W,U}[\rho_{R_*}]=\inf_{\cM(\overline{B(0;R_*)})} E_{W,U} = \inf_{\cM(\mathbb{R}^d)} E_{W,U}$, and thus $\rho_{R_*}$ is a minimizer of $E_{W,U}$ in $\cM(\overline{B(0;R_*)})$.

\end{proof}

\subsection{Proof of the new part of Theorem \ref{thm_exist2}}\label{sec_exist2}

For the existence of minimizers of $E_W$, the necessity and sufficiency of the condition \eqref{thm_exist2_0} is obtained in \cite[Theorems 3.1 and 3.2]{SST15} via the concentration-compactness lemma. If \eqref{thm_exist2_1} is assumed, the compact support property of any minimizer is obtained in \cite[Corollary 1.5]{CCP15}. Under an extra assumption:
\begin{equation}\begin{split}\label{assu_CCP}
	& \text{There exists $R>0$ such that $W$ is an increasing function on $\mathbb{R}^{k-1}\times [R,\infty)\times \mathbb{R}^{d-k}$} \\
	& \text{as a function of its $k$-th variable, for each $k\in \{1,\dots,d\}$,}
\end{split}\end{equation}
\cite[Theorem 1.4]{CCP15} gives an independent proof of the existence of minimizer, and derives an explicit upper bound for the diameter of the support of any minimizer in terms of $W$.

We will prove the same result as \cite[Theorem 1.4]{CCP15}, without the assumption \eqref{assu_CCP}, and thus Theorem \ref{thm_exist2} is obtained. We follow the approach of \cite{CCP15}, presenting a self-consistent proof of the existence of minimizer and estimate the diameter of its support. We use a new estimate (c.f. STEP 2 in the proof below) to guarantee that there cannot be a large gap between two clusters in a minimizer, as an improvement of \cite[Lemma 2.7]{CCP15} which relies on \eqref{assu_CCP}.

By adding constant, we may assume $W_\infty=0$ without loss of generality. We start by noticing that due to Lemma \ref{lem_trunres},  $\inf_{\rho\in\cM(\mathbb{R}^d)}E_W[\rho]<\frac{1}{2}W_\infty=0$ implies 
\begin{equation}\label{Erhoc0}
	\inf_{\rho\in\cM(\overline{B(0;R)})}E_W[\rho] \le -c_0,\quad c_0= -\frac{1}{2}\inf_{\rho\in\cM(\mathbb{R}^d)}E_W[\rho]>0\,,
\end{equation}
if $R$ is sufficiently large (say, $R\ge R_0$). 

We first recall a technical lemma.
\begin{lemma}[{\cite[Lemma 2.6]{CCP15}}]\label{lem_rhoRm}
	Under the same assumptions of Theorem \ref{thm_exist2} (with \eqref{thm_exist2_1}), there exists some large $R_1>0$ and small $m>0$ such that: for any minimizer $\rho_R$ of $E_W$ in $\cM(\overline{B(0;R)})$ for $R\ge R_0$, if $\bx_0\in\supp\rho$ then
	\begin{equation}\label{lem_rhoRm_1}
		\rho_R(B(\bx_0;R_1)) \ge m\,.
	\end{equation}
\end{lemma}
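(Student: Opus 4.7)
The idea is to extract a uniform lower bound on $\rho_R(B(\bx_0;R_1))$ directly from the Euler-Lagrange condition combined with the decay $W\to 0$ at infinity; this avoids any concentration-compactness or rearrangement argument, and in particular the monotonicity hypothesis \eqref{assu_CCP} used in \cite{CCP15}.

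Apply Lemma \ref{lem_EL} to the minimizer $\rho_R$ of $E_W$ on $\cM(\overline{B(0;R)})$: there is a constant $C_0$ with $V[\rho_R]=W*\rho_R\le C_0$ pointwise on $\supp\rho_R$ and $V[\rho_R]=C_0$ $\rho_R$-a.e. Integrating the latter against $\rho_R$ gives $C_0=2E_W[\rho_R]$, and combining with $E_W[\rho_R]\le -c_0$ from \eqref{Erhoc0} (valid for $R\ge R_0$) yields the uniform bound $C_0\le -2c_0<0$. Hence every $\bx_0\in\supp\rho_R$ satisfies $V[\rho_R](\bx_0)\le -2c_0$. The negativity of $C_0$ also forces $\inf W<0$, since $V[\rho_R]\ge \inf W$ pointwise; set $M:=-\inf W>0$, a constant depending only on $W$.

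Now, using $W_\infty=0$, choose a single $R_1>0$, depending only on $W$ and $c_0$, such that $W(\bz)\ge -c_0/2$ whenever $|\bz|\ge R_1$. For $\bx_0\in\supp\rho_R$, split $V[\rho_R](\bx_0)=\int W(\bx_0-\by)\rd{\rho_R(\by)}$ over $B(\bx_0;R_1)$ and its complement, using $W\ge -M$ on the near part and $W\ge -c_0/2$ on the far part (where $\rho_R$ has total mass at most $1$), to obtain
\begin{equation*}
-2c_0\ \ge\ V[\rho_R](\bx_0)\ \ge\ -M\,\rho_R(B(\bx_0;R_1))-\frac{c_0}{2}\,.
\end{equation*}
Rearranging gives $\rho_R(B(\bx_0;R_1))\ge \frac{3c_0}{2M}=:m$, a bound independent of $R\ge R_0$, of $\bx_0\in\supp\rho_R$, and of the particular minimizer chosen. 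There is no real obstacle; the only point worth noting is that the inequality $V[\rho_R](\bx_0)\le C_0$ must be used pointwise at the specific point $\bx_0\in\supp\rho_R$, rather than merely $\rho_R$-a.e., which is exactly the content of \eqref{lem_EL_1}.
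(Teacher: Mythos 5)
Your proof is correct: the Euler--Lagrange identity $C_0=2E_W[\rho_R]\le -2c_0$ on $\supp\rho_R$, combined with the splitting of $W*\rho_R(\bx_0)$ into the near part (bounded below by $\inf W$) and the far part (bounded below by $-c_0/2$ once $R_1$ is large), cleanly yields the uniform mass bound $m=3c_0/(2M)$. This is essentially the same argument as in the cited reference \cite[Lemma 2.6]{CCP15}, which the paper does not reproduce.
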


\begin{proof}[Proof of Theorem \ref{thm_exist2}]
	
We first prove that there exists $R_2$ such that: for any minimizer $\rho_R$ of $E_W$ in $\cM(\overline{B(0;R)})$ for $R\ge R_0$, there holds
\begin{equation}\label{xyR2}
	|\bx-\by|< R_2,\quad \forall \bx,\by\in\supp\rho_R\,.
\end{equation}
This will be done in STEPs 1-3 below.

{\bf STEP 1}: Construct balls which cover $\supp\rho$.

We start from a point $\bx_1\in \supp\rho_R$, and apply Lemma \ref{lem_rhoRm} to get $\rho_R(B(\bx_1;R_1)) \ge m$. Either $\supp\rho_R\subset B(\bx_1;2R_1)$, or we may take $\bx_2\in \supp\rho_R \backslash B(\bx_1;2R_1)$ and apply Lemma \ref{lem_rhoRm} to get $\rho_R(B(\bx_2;R_1)) \ge m$. 

We iterate this procedure as follows: if $\bx_1,\dots,\bx_{i-1}\in \supp\rho_R$ are already chosen, then either $\supp\rho_R\subset \bigcup_{j=1}^{k-1}B(\bx_j;2R_1)$, or we may take $\bx_i\in \supp\rho_R \backslash \bigcup_{j=1}^{i-1}B(\bx_j;2R_1)$ and apply Lemma \ref{lem_rhoRm} to get $\rho_R(B(\bx_i;R_1)) \ge m$.

Notice that this procedure guarantees $|\bx_i-\bx_j|\ge 2R_1$ if $i\ne j$. This implies that the balls $B(\bx_i;R_1)$ are disjoint. Since $\rho_R(B(\bx_i;R_1)) \ge m$ for every $i$ and $\rho_R$ is a probability measure, we see that this procedure has to terminate at some $n\le \frac{1}{m}$ with $\supp\rho_R\subset \bigcup_{j=1}^{n}B(\bx_j;2R_1)$.

{\bf STEP 2}: Estimate distances between balls.

Then we claim that there exists $R_3$ (large, to be determined) such that it is impossible to write 
\begin{equation}\label{R3S12}
	\bigcup_{j=1}^{n}B(\bx_j;2R_1) = S_1 \cup S_2,\quad S_i= \bigcup_{j\in J_i}B(\bx_j;2R_1) ,\,i=1,2\,,
\end{equation}
for some nonempty disjoint index sets $J_1,J_2$ such that $\dist(S_1,S_2)>R_3$. 

Suppose the contrary, then we write
\begin{equation}
	\rho_R = \rho_{R,1} + \rho_{R,2} = \rho\chi_{S_1} + \rho\chi_{S_2}\,,
\end{equation}
where both $\rho_{R,1}$ and $\rho_{R,2}$ are positive measures, whose masses $\rho_R(S_1)$ and $\rho_R(S_2)$ are between $m$ and $1-m$ with $\rho_R(S_1)+\rho_R(S_2)=1$. Then 
\begin{equation}\label{ErhoR12}
	E_W[\rho_R] = E_W[\rho_{R,1}] + E_W[\rho_{R,2}] + \int_{S_1}\int_{S_2}W(\bx-\by)\rd{\rho_{R,2}(\by)}\rd{\rho_{R,1}(\bx)}\,.
\end{equation}
By \eqref{Erhoc0}, we have $E_W[\rho_R]\le -c_0$ since $\rho_R$ is a minimizer of $E$ in $\cM(\overline{B(0;R)})$ with $R\ge R_0$. Since $\lim_{|\bx|\rightarrow\infty}W(\bx)=0$, we may take $R_3$ sufficiently large so that 
\begin{equation}\label{ErhoR12W}
	W(\bx)\ge -\epsilon c_0 ,\quad \forall |\bx|\ge R_3\,,
\end{equation}
where $\epsilon>0$ is a small parameter to be determined. Then
\begin{equation}\begin{split}
	E_W[\rho_{R,1}] + E_W[\rho_{R,2}] = & E_W[\rho_R] - \int_{S_1}\int_{S_2}W(\bx-\by)\rd{\rho_{R,2}(\by)}\rd{\rho_{R,1}(\bx)} \\
	\le & -c_0 - (-\epsilon c_0) = -(1-\epsilon)c_0 < 0\,,
\end{split}\end{equation}
since $\dist(S_1,S_2)>R_3$ is assumed. Then we construct two probability measures supported on $\overline{B(0;R)}$:
\begin{equation}
	\tilde{\rho}_{R,i} = \frac{1}{\rho_R(S_i)}\rho_{R,i},\quad i=1,2\,.
\end{equation} 
Then 
\begin{equation}
	E_W[\tilde{\rho}_{R,i}] = \frac{1}{\rho_R(S_i)^2}E_W[\rho_{R,i}]\,.
\end{equation}
We claim that 
\begin{equation}\label{rhoR12}
	\min\{E_W[\tilde{\rho}_{R,1}], E_W[\tilde{\rho}_{R,2}]\} \le \frac{1}{m^2+(1-m)^2} (E_W[\rho_{R,1}] + E_W[\rho_{R,2}])\,,
\end{equation}
where one notices that $\frac{1}{m^2+(1-m)^2}>1$. To see this, we first notice that if one of $E_W[\rho_{R,1}],E_W[\rho_{R,2}]$ is positive, say, $E_W[\rho_{R,1}]>0$, then $E_W[\rho_{R,2}]<0$ and it follows that
\begin{equation}
	E_W[\tilde{\rho}_{R,2}] =  \frac{1}{\rho_R(S_2)^2}E_W[\rho_{R,2}] \le \frac{1}{(1-m)^2}E_W[\rho_{R,2}]\le \frac{1}{(1-m)^2}(E_W[\rho_{R,1}] + E_W[\rho_{R,2}])\,,
\end{equation}
which gives \eqref{rhoR12}. If both $E_W[\rho_{R,1}],E_W[\rho_{R,2}]$ are nonpositive, then we write
\begin{equation}
	\min\{E_W[\tilde{\rho}_{R,1}], E_W[\tilde{\rho}_{R,2}]\} = (E_W[\rho_{R,1}] + E_W[\rho_{R,2}])\cdot \max\Big\{\frac{1}{\rho_R(S_1)^2}t,\frac{1}{\rho_R(S_2)^2}(1-t)\Big\}\,,
\end{equation}
where $t=\frac{E_W[\rho_{R,1}]}{E_W[\rho_{R,1}] + E_W[\rho_{R,2}]}\in [0,1]$. In the last maximum, the first quantity is increasing in $t$ and the second is decreasing. Therefore this maximum is minimized when the two quantities are equal, which gives
\begin{equation}\begin{split}
	\max\Big\{\frac{1}{\rho_R(S_1)^2}t,\frac{1}{\rho_R(S_2)^2}(1-t)\Big\} \ge & \frac{1}{\rho_R(S_1)^2}\cdot \frac{\frac{1}{\rho_R(S_2)^2}}{\frac{1}{\rho_R(S_1)^2}+\frac{1}{\rho_R(S_2)^2}} \\
	 = & \frac{1}{\rho_R(S_1)^2+\rho_R(S_2)^2} \\
	 \ge & \frac{1}{m^2+(1-m)^2}\,,
\end{split}\end{equation}
which implies \eqref{rhoR12}.

Combining \eqref{rhoR12} with \eqref{ErhoR12} and \eqref{ErhoR12W} we get
\begin{equation}\begin{split}
		\min\{E_W[\tilde{\rho}_{R,1}], E_W[\tilde{\rho}_{R,2}]\} \le & \frac{1}{m^2+(1-m)^2}(E_W[\rho_R] +\epsilon c_0)
		\\
		= & E_W[\rho_R] +\frac{1-m^2-(1-m)^2}{m^2+(1-m)^2}E_W[\rho_R] + \frac{1}{m^2+(1-m)^2} \epsilon c_0 \\
		\le & E_W[\rho_R] + \frac{1-m^2-(1-m)^2}{m^2+(1-m)^2}(-c_0) + \frac{1}{m^2+(1-m)^2} \epsilon c_0 \\
		= & E_W[\rho_R] + \frac{c_0}{m^2+(1-m)^2}\Big(-(1-m^2-(1-m)^2) +  \epsilon\Big)\,. \\
\end{split}\end{equation}
Taking
\begin{equation}
	\epsilon = \frac{1}{2}(1-m^2-(1-m)^2)>0\,,
\end{equation}
(and $R_3$ is determined by \eqref{ErhoR12W} accordingly) we see that $\min\{E_W[\tilde{\rho}_{R,1}], E_W[\tilde{\rho}_{R,2}]\} <  E_W[\rho_R]$, contradicting the fact that $\rho_R$ is a minimizer of $E_W$ in $\cM(\overline{B(0;R)})$. This proves the claim at the beginning of STEP 2.

{\bf STEP 3}: finalize. 

We start by noticing that some $\bx_j,\,j\in\{2,\dots,n\}$ satisfies $|\bx_1-\bx_j| \le R_3+4R_1$ because otherwise $|\bx_1-\bx_j| > R_3+4R_1$ for $j=2,\dots,n$, and one can choose $J_1=\{1\}$ in \eqref{R3S12} to make $\dist(S_1,S_2)>R_3$, which was proved to be impossible. Assume without loss of generality that this $j$ is 2. Then some $\bx_j,\,j\in\{3,\dots,n\}$ satisfies $|\bx_1-\bx_j| \le 2(R_3+4R_1)$ because otherwise $|\bx_1-\bx_j| > 2(R_3+4R_1)$ for $j=3,\dots,n$, and thus $|\bx_2-\bx_j| > R_3+4R_1$ for $j=3,\dots,n$. and choosing $J_1=\{1,2\}$ in \eqref{R3S12} leads to a contradiction. One can iterate this procedure $n-1$ times to show that $|\bx_1-\bx_j|\le (n-1)(R_3+4R_1)$ for every $j$. Therefore we obtain \eqref{xyR2} with $R_2 = 2(n-1)(R_3+4R_1)+4R_1$ since $\supp\rho_R$ is covered by $\{B(\bx_j;2R_1)\}$.

One can see from the proof of Lemma \ref{lem_rhoRm} (c.f. \cite{CCP15}) that it also applies to any minimizer of $E_W$ in $\cM(\mathbb{R}^d)$. Therefore STEPs 1-3 apply to any minimizer of $E_W$ in $\cM(\mathbb{R}^d)$ (if exists), showing that it is compactly supported at its support has diameter at most $R_2$.

{\bf STEP 4}: Existence of minimizer of $E_W$, and compact-support for any minimizer of $E_W$.

The existence of minimizer of $E_W$ in $\cM(\overline{B(0;R)})$ was established in \cite[Lemma 2.2]{CCP15}. We claim that a minimizer $\rho_{R_2}$ of $E_W$ in $\cM(\overline{B(0;R_2)})$ is a minimizer of $E_W$ in $\cM(\mathbb{R}^d)$. In fact, for any sufficiently large $R>R_2$ and a minimizer $\rho_R$ of $E_W$ in $\cM(\overline{B(0;R)})$, \eqref{xyR2} allows one to translate $\rho_R$ into $\overline{B(0;R_2)}$. Then, similar to the last paragraph of the proof of Theorem \ref{thm_exist1}, one can prove that $\rho_{R_2}$ is a minimizer of $E_W$ in $\cM(\mathbb{R}^d)$.

\end{proof}

{
\section*{Acknowledgement}
The author would like to thank Edward B. Saff for helpful suggestions on a draft of this paper.
}

\bibliographystyle{alpha}
\bibliography{minimizer_book_bib.bib}

\end{document}